\numberwithin{equation}{section}
\newtheorem{theorem}{Theorem}[section]
\newtheorem{prop}[theorem]{Proposition}
\newtheorem{lemma}[theorem]{Lemma}
\newtheorem{conj}[theorem]{Conjecture}
\theoremstyle{definition}
\newtheorem{corollary}[theorem]{Corollary}
\newtheorem{definition}[theorem]{Definition}
 \newtheorem{question}[theorem]{Question}
\newtheorem{remark}[theorem]{Remark}
\newtheorem{example}[theorem]{Example}
\newcommand{\bC}{\mathbb{C}}
\newcommand{\bQ}{\mathbb{Q}}
\newcommand{\bR}{\mathbb{R}}
\renewcommand{\arraystretch}{1.2}
\newcommand{\bP}{\mathbb{P}}
\newcommand{\bA}{\mathbb{A}}
\newcommand{\calD}{\mathcal{D}}
\newcommand{\calO}{\mathcal{O}}
\newcommand{\calM}{\mathcal{M}}
\newcommand{\bZ}{\mathbb{Z}}
\newcommand{\Supp}{\textrm{Supp}}
\newcommand{\lct}{{\mathrm{lct}}}
\newcommand{\calC}{\mathcal C}
\newcommand{\bF}{\mathbb{F}}
\begin{document}

\title{Rational unicuspidal plane curves of low degree}

\author[DeVleming]{Kristin DeVleming}
\address{Department of Mathematics and Statistics,
University of Massachusetts, Amherst, MA 01003, USA}
\email{kdevleming@umass.edu}

\author[Singh]{Nikita Singh}
\address{Department of Mathematics and Statistics,
University of Massachusetts, Amherst, MA 01003, USA}
\email{nrsingh@umass.edu}

\begin{abstract}
    We completely classify all plane curves of degree at most $30$ with a unique cuspidal (locally unibranch) singular point and rational normalization in terms of the Newton pairs parameterizing the cusp.  We distinguish between prime and composite degree in the classification and study the relationship between prime or composite degree and the number of distinct topological types of cuspidal singularities.  Motivated by wall-crossing in moduli of curves, we also survey several geometric properties of rational unicuspidal plane curves.  
\end{abstract}

\maketitle 


\section{Introduction}

It is a difficult open question to effectively classify all possible singularities that can appear on a degree $d$ plane curve $C\subset \bP^2_{\bC}$.  In this paper, we explore the classification question for \textit{rational unicuspidal curves} and provide a complete classification of low degree curves. 

\begin{definition}
    A plane curve $C \subset \bP^2_\bC$ is \textbf{cuspidal} if it is locally irreducible at all singular points.  It is \textbf{unicuspidal} if is has a unique singular point and this point is cuspidal.  
\end{definition}

\begin{definition}
    A projective curve $C$ is \textbf{rational} if it has rational normalization $C^\nu \cong \bP^1$.  
\end{definition}

Classifying the allowable cusps on plane curves is related to many difficult conjectures and results in algebraic geometry, such as the Coolidge-Nagata problem, determining the maximal number of cusps on a plane curve, or rigidity questions related to the curve and its complement.  More recently, the classification of such curves has proved necessary for problems in moduli of curves and surfaces.  For instance, rational unicuspidal limits of smooth quintic curves give rise, under semistable reduction, to hyperelliptic genus 6 curves (see, e.g. \cite[\S 7.1]{ADL19} or \cite[Theorem D, Example 1.11]{DS22}), and similar rational unicuspidal plane curves of higher degree can give rise to lower gonality (non-planar) limits of degree $d$ planar curves.  

Geometrically, the rationality of the plane curve corresponds to its complete replacement in semistable reduction.  Given a generically smooth family of genus $g \ge 2$ curves $\calC$ over a pointed curve $0 \in T$ with central fiber $\calC_0$, we may compute the \textit{stable replacement} of $\calC_0$--the associated limit of the family in $\overline{\calM}_g$, the moduli space of stable genus $g$ curves--by resolving the surface $\calD \to \calC$ and running a relative minimal model program on $\calD$ over $T$ (suppressing any base change in the notation).  The minimal model program contracts rational curves in the fiber over $0$.  If $\calC_0$ is a rational unicuspidal plane curve, the central fiber of the resolution $\calD_0$ is a configuration of curves with a rational tail that is the strict transform of $\calC_0$.  This rational tail is contracted as the first step of the minimal model program and $\calC_0$ completely disappears via semistable reduction.  Furthermore, if no rational tails exist in $\calD_0$, the surface $\calD$ is already minimal so is its own minimal model.  In this sense, rational unicuspidal curves are geometrically significant for moduli of curves and the ``starting point'' for semistable reduction.

Their geometric interest is further illustrated in moduli of pairs, which is the motivation for considering rational unicuspidal curves in $\bP^2$.  For each $\alpha \in (0, 1) \cap \bQ$, there exists a projective moduli space $\calM_d^\alpha$ compactifying the space of pairs $(\bP^2, \alpha C_d)$ where $C_d$ is a smooth degree $d \ge 4$ plane curve.  This space is constructed using K-stability for $ \alpha \in (0, \frac{3}{d})$ (\cite{ADL19, Xu23}), the theory of KSB and KSBA stability for $\alpha \in (\frac{3}{d},1)$ (\cite{Kollar23}), or using \cite{BABWILD} for $\alpha = \frac{3}{d}$.  For each $\alpha$, this space parameterizes some notion of stable pairs, but the stability condition changes as $\alpha$ varies, so the objects parameterized by the boundary of the moduli space may change.  By \cite{ADL19, ABIP}, there is a finite collection of values of $\alpha$ where the stability condition changes, meaning that there exist rational numbers $0 < \alpha_1 < \alpha_2 < \dots < \alpha_n < 1$ such that, for $\alpha,\alpha' \in (\alpha_i, \alpha_{i+1})$, $\calM_d^{\alpha} \cong \calM_d^{\alpha'}$ and for each $\alpha_i$ we have a diagram (up to seminormalization): 

\begin{center}
    \begin{tikzcd}
        \calM_d^{\alpha_i - \epsilon} \arrow[rd] \arrow[rr,dashed] & & \calM_d^{\alpha_i+ \epsilon} \arrow[ld] \\ 
        & \calM_d^{\alpha_i} & \\
    \end{tikzcd}
\end{center}

The singularities of curves $C_d$ correspond to certain walls $\alpha_i$.  For example, if $\lct(C_d) = \beta$ for some $\beta \ge \frac{3}{d}$, then $\beta$ is a value corresponding to a wall-crossing, i.e. $\beta = \alpha_i$ for some $i$.  In general, in the wall-crossing for moduli of pairs $(\bP^2, \alpha C_d)$, the rational unicuspidal curves and their intrinsic geometry control behavior of certain walls and surfaces that may arise as degenerations of the ambient space, as the following examples illustrate. 

\begin{example}\label{quintics}
    There exists a rational unicuspidal curve $C_0$ of degree $5$ with a cusp locally analytically of the form $x^2 + y^{13} = 0$ (c.f. Theorem \ref{thm:onepair}.4).  This curve appears in $\calM_5^\alpha$ for $\alpha \ll 1$, but is unstable for $\alpha > \frac{8}{15}$ \cite[\S 7]{ADL19}.  In the associated moduli spaces of quintic plane curves, there is a wall crossing at $\alpha_i = \frac{8}{15}$ where the pair $(\bP^2, (\alpha_i - \epsilon) C_0) $ appears in the moduli space $\calM_5^{\alpha_i - \epsilon}$ but is replaced by $(X_{26}, (\alpha_i+\epsilon) C_h)$ in $\calM_5^{\alpha_i + \epsilon}$.  Here, $X_{26}$ is a partial smoothing of the $\frac{1}{4}(1,1)$ singularity on $\bP(1,4,25)$ and $C_h$ is a hyperelliptic genus 6 curve.  The birational geometry of the replacement is shown in the following diagram, with details left to \cite[\S 5]{KmoduliAGNESnotes}.

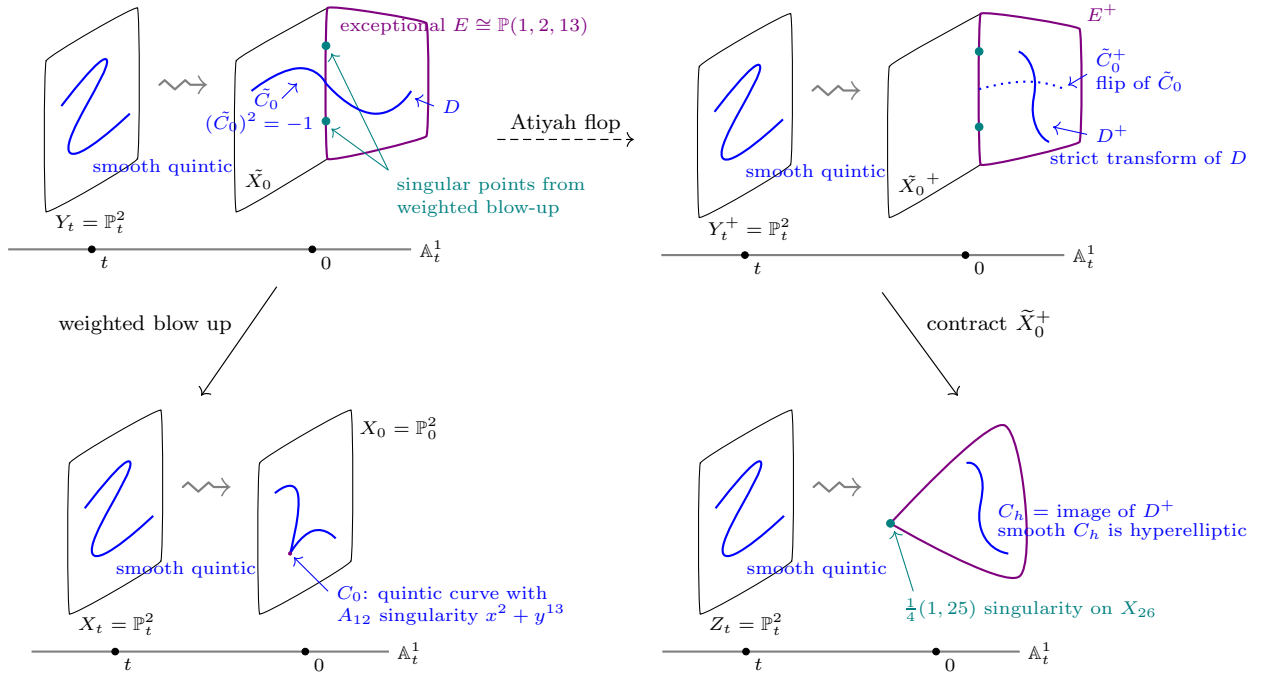
\begin{figure}[h]
\begin{tikzcd}[row sep=huge]
\begin{tabular}{c}
\begin{tikzpicture}[gren0/.style = {draw, circle,fill=greener!80,scale=.7},gren/.style ={draw, circle, fill=greener!80,scale=.4},blk/.style ={draw, circle, fill=black!,scale=.2},plc/.style ={draw, circle, color=white!100,fill=white!100,scale=0.02},smt/.style ={draw, circle, color=gray!100,fill=gray!100,scale=0.02},lbl/.style ={scale=.2}] 

\node[smt] at (-2.5+0,0) (1){};
\node[smt] at (-2.5+0,2) (2){};
\node[smt] at (-2.5+1.2,2.7) (3){};
\node[smt] at (-2.5+1.2,0.7) (4){};
\draw [black] plot [smooth cycle, tension=.1] coordinates { (1) (2) (3) (4) };

\draw [thick,blue] plot [smooth, tension=1] coordinates { (-2.5+.2,1.4) (-2.5+.8,2) (-2.5+.3,0.8) (-2.5+1.1,1.3)};
\node[below right,node font=\tiny,text=blue] at (-2.5+0.5,0.8) {smooth quintic};

\node[smt] at (0,0) (5){};
\node[smt] at (0,2) (6){};
\node[smt] at (1.2,2.7) (7){};
\node[smt] at (1.2,0.7) (8){};
\draw [black] plot [smooth, tension=.1] coordinates { (8) (5) (6) (7) };

\node[smt] at (2.5,2.5) (11){};
\node[smt] at (2.5,1) (12){};
\draw [thick,violet] plot [smooth cycle, tension=.1] coordinates { (8) (12) (11) (7) };
\node[node font=\tiny,color=violet] at (3,2.4) {exceptional $E \cong \bP(1,2,13)$};
\filldraw[color=teal, fill=teal](1.18,2.2) circle (0.05);
\filldraw[color=teal, fill=teal](1.18,1.2) circle (0.05);
\draw[teal,->] (2,.55) -- (1.25,2.1);
\draw[teal,->] (2,.55) -- (1.25,1.1);
\node[below right,node font=\tiny,color=teal] at (2,.55) {singular points from};
\node[below right,node font=\tiny,color=teal] at (2,.25) {weighted blow-up};

\draw [thick,blue] plot [smooth, tension=1] coordinates { (0.2,1.6)  (.8,1.9) (1.18,1.7) };
\draw [thick,blue] plot [smooth, tension=1] coordinates {  (1.18,1.7) (1.8,1.3) (2.3,1.6) };
\draw[blue,->] (.6,1.5) -- (.8,1.7);
\node[node font=\tiny,color=blue] at (.4,1.4) {$\tilde{C_0}$};
\node[node font=\tiny,color=blue] at (.3,1.1) {$(\tilde{C_0})^2= -1$};
\draw[blue,->] (2.6,1.4) -- (2.35,1.5);
\node[right,node font=\tiny,color=blue] at (2.6,1.4) {$D$};

\draw[thick,color=gray] (-2.5-0.5,-.5) to (1.8+0.5,-.5);
\node[blk] at (-2.5+0.6,-.5) (9){};
\node[below right, node font=\tiny] at (9) {$t$};
\node[blk] at (1,-.5) (10){};
\node[below right, node font=\tiny] at (10) {$0$};
\node[right, node font=\tiny] at (1.8+0.5,-.5) {$\bA^1_t$};

\node[below right,node font=\tiny] at (-2.5,0.1) {$Y_t = \bP^2_t$};
\node[above right, node font=\tiny] at (0,0.2) {$\tilde{X_0}$};
\node[node font=\huge, text=gray] at (-2.5+1.8,1.5) {$\rightsquigarrow$};
\end{tikzpicture}
\end{tabular} \arrow[d,swap,start anchor={[xshift=-2ex]},end anchor={[xshift=-8ex]},"\text{weighted blow up}"] \arrow[r,dashed, start anchor={[xshift=-10ex]}, "\text{Atiyah flop}"] &[-2em] \begin{tabular}{c}
\begin{tikzpicture}[gren0/.style = {draw, circle,fill=greener!80,scale=.7},gren/.style ={draw, circle, fill=greener!80,scale=.4},blk/.style ={draw, circle, fill=black!,scale=.2},plc/.style ={draw, circle, color=white!100,fill=white!100,scale=0.02},smt/.style ={draw, circle, color=gray!100,fill=gray!100,scale=0.02},lbl/.style ={scale=.2}] 

\node[smt] at (-2.5+0,0) (1){};
\node[smt] at (-2.5+0,2) (2){};
\node[smt] at (-2.5+1.2,2.7) (3){};
\node[smt] at (-2.5+1.2,0.7) (4){};
\draw [black] plot [smooth cycle, tension=.1] coordinates { (1) (2) (3) (4) };

\draw [thick,blue] plot [smooth, tension=1] coordinates { (-2.5+.2,1.4) (-2.5+.8,2) (-2.5+.3,0.8) (-2.5+1.1,1.3)};
\node[below right,node font=\tiny,text=blue] at (-2.5+0.5,0.8) {smooth quintic};

\node[smt] at (0,0) (5){};
\node[smt] at (0,2) (6){};
\node[smt] at (1.2,2.7) (7){};
\node[smt] at (1.2,0.7) (8){};
\draw [black] plot [smooth, tension=.1] coordinates { (8) (5) (6) (7) };

\node[smt] at (2.5,2.5) (11){};
\node[smt] at (2.5,1) (12){};
\draw [thick,violet] plot [smooth cycle, tension=.1] coordinates { (8) (12) (11) (7) };
\node[node font=\tiny,color=violet] at (2.8,2.6) { $E^+$};
\filldraw[color=teal, fill=teal](1.18,2.2) circle (0.05);
\filldraw[color=teal, fill=teal](1.18,1.2) circle (0.05);

\draw [thick,dotted,blue] plot [smooth, tension=1] coordinates {  (1.18,1.7) (1.8,1.8) (2.3,1.7) };
\draw [thick,blue] plot [smooth, tension=1] coordinates {  (1.7,2.2) (1.9,1.9) (1.9,1.3) (2.1,1) };

\draw[blue,->] (2.6,1.9) -- (2.35,1.75);
\node[right,node font=\tiny,color=blue] at (2.6,2.1) {$\tilde{C}_0^+$};
\node[right,node font=\tiny,color=blue] at (2.6,1.8) {flip of $\tilde{C}_0$};

\draw[blue,->] (2.6,1.1) -- (2.15,1.2);
\node[right,node font=\tiny,color=blue] at (2.6,1.1) {$D^+$};
\node[right,node font=\tiny,color=blue] at (2,0.8) {strict transform of $D$};

\draw[thick,color=gray] (-2.5-0.5,-.5) to (1.8+0.5,-.5);
\node[blk] at (-2.5+0.6,-.5) (9){};
\node[below right, node font=\tiny] at (9) {$t$};
\node[blk] at (1,-.5) (10){};
\node[below right, node font=\tiny] at (10) {$0$};
\node[right, node font=\tiny] at (1.8+0.5,-.5) {$\bA^1_t$};


\node[below right,node font=\tiny] at (-2.5,0.1) {$Y^+_t = \bP^2_t$};
\node[above right, node font=\tiny] at (0,0.2) {$\tilde{X_0}^+$};
\node[node font=\huge, text=gray] at (-2.5+1.8,1.5) {$\rightsquigarrow$};
\end{tikzpicture}
\end{tabular} \arrow[d,start anchor={[xshift=-6ex]},end anchor={[xshift=0ex]},"\text{contract } \widetilde{X}_0^+"]  \\
\begin{tabular}{c}
\begin{tikzpicture}[gren0/.style = {draw, circle,fill=greener!80,scale=.7},gren/.style ={draw, circle, fill=greener!80,scale=.4},blk/.style ={draw, circle, fill=black!,scale=.2},plc/.style ={draw, circle, color=white!100,fill=white!100,scale=0.02},smt/.style ={draw, circle, color=gray!100,fill=gray!100,scale=0.02},lbl/.style ={scale=.2}] 

\node[smt] at (-2.5+0,0) (1){};
\node[smt] at (-2.5+0,2) (2){};
\node[smt] at (-2.5+1.2,2.7) (3){};
\node[smt] at (-2.5+1.2,0.7) (4){};
\draw [black] plot [smooth cycle, tension=.1] coordinates { (1) (2) (3) (4) };

\draw [thick,blue] plot [smooth, tension=1] coordinates { (-2.5+.2,1.4) (-2.5+.8,2) (-2.5+.3,0.8) (-2.5+1.1,1.3)};
\node[below right,node font=\tiny,text=blue] at (-2.5+0.5,0.8) {smooth quintic};

\node[smt] at (0,0) (5){};
\node[smt] at (0,2) (6){};
\node[smt] at (1.2,2.7) (7){};
\node[smt] at (1.2,0.7) (8){};
\draw [black] plot [smooth cycle, tension=.1] coordinates { (5) (6) (7) (8) };

\draw [thick,blue] plot [smooth, tension=1] coordinates { (0.2,1.6) (0.5,1.6) (0.4,.8) };
\draw [thick,blue] plot [smooth, tension=1] coordinates {  (0.4,.8) (.7,1.1) (1,1) };
\draw[blue,->] (.9,.3) -- (.45,.75);
\filldraw[color=violet, fill=violet](0.4,0.8) circle (0.02);
\node[right,color=blue,node font=\tiny] at (.9,.3) {$C_0$: quintic curve with};
\node[right,color=blue,node font=\tiny] at (.9,0) {$A_{12}$ singularity $x^2 +y^{13}$};

\draw[thick,color=gray] (-2.5-0.5,-.5) to (1.2+0.5,-.5);
\node[blk] at (-2.5+0.6,-.5) (9){};
\node[below right, node font=\tiny] at (9) {$t$};
\node[blk] at (0.6,-.5) (10){};
\node[below right, node font=\tiny] at (10) {$0$};
\node[right, node font=\tiny] at (1.2+0.5,-.5) {$\bA^1_t$};

\node[below right,node font=\tiny] at (-2.5,0.1) {$X_t = \bP^2_t$};
\node[below right, node font=\tiny] at (7) {$X_0 = \bP^2_0$};
\node[node font=\huge, text=gray] at (-2.5+1.8,1.5) {$\rightsquigarrow$};
\end{tikzpicture}
\end{tabular} & \begin{tabular}{c}
\begin{tikzpicture}[gren0/.style = {draw, circle,fill=greener!80,scale=.7},gren/.style ={draw, circle, fill=greener!80,scale=.4},blk/.style ={draw, circle, fill=black!,scale=.2},plc/.style ={draw, circle, color=white!100,fill=white!100,scale=0.02},smt/.style ={draw, circle, color=gray!100,fill=gray!100,scale=0.02},lbl/.style ={scale=.2}] 

\node[smt] at (-2.5+0,0) (1){};
\node[smt] at (-2.5+0,2) (2){};
\node[smt] at (-2.5+1.2,2.7) (3){};
\node[smt] at (-2.5+1.2,0.7) (4){};
\draw [black] plot [smooth cycle, tension=.1] coordinates { (1) (2) (3) (4) };

\draw [thick,blue] plot [smooth, tension=1] coordinates { (-2.5+.2,1.4) (-2.5+.8,2) (-2.5+.3,0.8) (-2.5+1.1,1.3)};
\node[below right,node font=\tiny,text=blue] at (-2.5+0.5,0.8) {smooth quintic};

\node[smt] at (0,1.2) (6){};
\node[smt] at (1.5,2.5) (7){};
\node[smt] at (1.7,0.5) (8){};
\draw [thick,violet] plot [smooth, tension=0.5] coordinates { (6) (7) (8) (6) };
\filldraw[color=teal, fill=teal](0,1.2) circle (0.05);
\draw[teal,->] (.3,.3) -- (.05,1.1);
\node[below right,color=teal,node font=\tiny] at (.05,.3) {$\frac{1}{4}(1,25)$ singularity on $X_{26}$};

\draw [thick,blue] plot [smooth, tension=1] coordinates {  (1,2) (1.2,1.8) (1.2,1.1) (1.55,.8) };
\node[below right,color=blue,node font=\tiny] at (1.3,1.6) {$C_h$ = image of $D^+$};
\node[below right,color=blue,node font=\tiny] at (1.3,1.3) {smooth $C_h$ is hyperelliptic};

\draw[thick,color=gray] (-2.5-0.5,-.5) to (1.2+0.5,-.5);
\node[blk] at (-2.5+0.6,-.5) (9){};
\node[below right, node font=\tiny] at (9) {$t$};
\node[blk] at (0.6,-.5) (10){};
\node[below right, node font=\tiny] at (10) {$0$};
\node[right, node font=\tiny] at (1.2+0.5,-.5) {$\bA^1_t$};

\node[below right,node font=\tiny] at (-2.5,0.1) {$Z_t = \bP^2_t$};
\node[node font=\huge, text=gray] at (-2.5+1.8,1.5) {$\rightsquigarrow$};
\end{tikzpicture}
\end{tabular} \\
\end{tikzcd}
\vspace{-.5in}
\caption{Replacement of the $A_{12}$ quintic curve.}
\label{f:quinticsecondwall}
\end{figure}

The birational operations to produce the pair $(X_{26}, C_h)$ are explicitly tied to geometric properties of the unicuspidal plane curve: first, $C_0$ must have been \textit{rational} with minimal resolution $\widetilde{C}_0$ such that $\widetilde{C}_0$ has \textit{negative self-intersection} in the surface $\widetilde{X}_0$ in order to be flopped from the top left diagram to the top right diagram.  Additionally, the complement $\bP^2 - C_0$ must have had \textit{negative Kodaira dimension} to be contracted to a point in the minimal model, producing the normal surface $X_{26}$. These properties will be explored in general in Section \ref{s:kodairadim}.

Furthermore, starting with a sufficiently general family of smooth quintics degenerating to $C_0$, we see that the curve $D$ in the exceptional divisor of the first weighted blow-up is smooth and itself is isomorphic to the stable replacement of $C_0$.  So, the geometry of $C_0$ in this example gives rise to interesting smooth degenerations of quintic plane curves and a normal degeneration of $\bP^2$.  In fact, this can be generalized for any unicuspidal rational curve as in Theorem \ref{thm:onepair}.4, with $2, 5, 13$ replaced by the appropriate Fibonacci numbers.  
\end{example}

\begin{example}\label{octics}
    There exists a rational octic curve $C_0$ with a unique cusp of the form $y^{3} = x^{22}$ (c.f. Theorem \ref{thm:onepair}.5).  Consider a general family of smooth octics $C_t$ degenerating to $C_0$ over $\bA^1_t$.  The log canonical threshold of the curve $C_0$ is $\frac{1}{3} + \frac{1}{22} < \frac{1}{2}$, so $(\bP^2, \frac{1}{2}C_0)$ is not log canonical and is not a stable pair in $\calM_8^{1/2}$.  In this case, computing the stable replacement of the limit $(\bP^2, \frac{1}{2}C_0)$ (the associated limit in the KSB moduli space) gives rise to an slc surface of general type. Generically for $t \ne 0$, the fiber $(\bP^2, \frac{1}{2}C_t)$ is log smooth, and the associated double cover of $\bP^2$ branched over $C_t$ is a smooth Horikawa surface of general type.  However, the pair $(\bP^2, \frac{1}{2}C_0)$ is not slc, so the associated branched cover is not slc.  To compute the limit of the family of Horikawa surfaces in the KSB moduli space, we must find the stable replacement of $(\bP^2, \frac{1}{2}C_0)$.  The birational geometry of computing the replacement is indicated in the following diagram. 

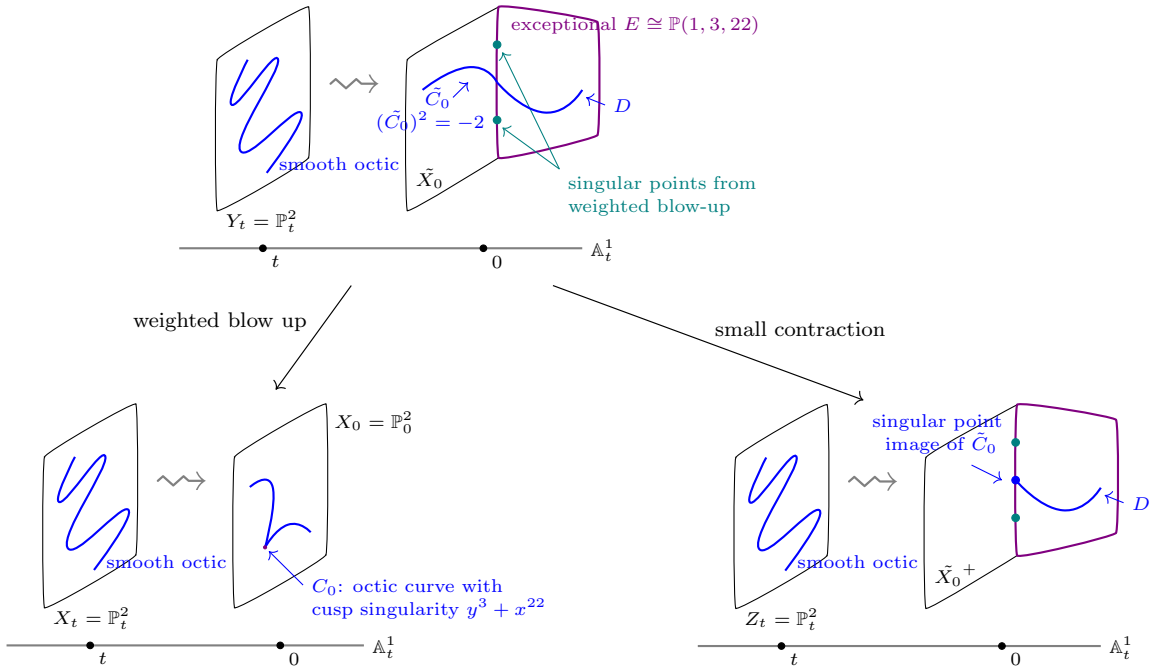
\begin{figure}[h]
\begin{tikzcd}[row sep=huge]
\hspace{1in} \begin{tabular}{c}
\begin{tikzpicture}[gren0/.style = {draw, circle,fill=greener!80,scale=.7},gren/.style ={draw, circle, fill=greener!80,scale=.4},blk/.style ={draw, circle, fill=black!,scale=.2},plc/.style ={draw, circle, color=white!100,fill=white!100,scale=0.02},smt/.style ={draw, circle, color=gray!100,fill=gray!100,scale=0.02},lbl/.style ={scale=.2}] 

\node[smt] at (-2.5+0,0) (1){};
\node[smt] at (-2.5+0,2) (2){};
\node[smt] at (-2.5+1.2,2.7) (3){};
\node[smt] at (-2.5+1.2,0.7) (4){};
\draw [black] plot [smooth cycle, tension=.1] coordinates { (1) (2) (3) (4) };

\draw [thick,blue] plot [smooth, tension=1] coordinates {(-2.5+.4,2) (-2.5+.2,1.4) (-2.5+.8,2) (-2.5+.3,0.8) (-2.5+1.1,1.3) (-2.5+.65,0.5)};
\node[below right,node font=\tiny,text=blue] at (-2.5+0.7,0.8) {smooth octic};

\node[smt] at (0,0) (5){};
\node[smt] at (0,2) (6){};
\node[smt] at (1.2,2.7) (7){};
\node[smt] at (1.2,0.7) (8){};
\draw [black] plot [smooth, tension=.1] coordinates { (8) (5) (6) (7) };

\node[smt] at (2.5,2.5) (11){};
\node[smt] at (2.5,1) (12){};
\draw [thick,violet] plot [smooth cycle, tension=.1] coordinates { (8) (12) (11) (7) };
\node[node font=\tiny,color=violet] at (3,2.4) {exceptional $E \cong \bP(1,3,22)$};
\filldraw[color=teal, fill=teal](1.18,2.2) circle (0.05);
\filldraw[color=teal, fill=teal](1.18,1.2) circle (0.05);
\draw[teal,->] (2,.55) -- (1.25,2.1);
\draw[teal,->] (2,.55) -- (1.25,1.1);
\node[below right,node font=\tiny,color=teal] at (2,.55) {singular points from};
\node[below right,node font=\tiny,color=teal] at (2,.25) {weighted blow-up};

\draw [thick,blue] plot [smooth, tension=1] coordinates { (0.2,1.6)  (.8,1.9) (1.18,1.7) };
\draw [thick,blue] plot [smooth, tension=1] coordinates {  (1.18,1.7) (1.8,1.3) (2.3,1.6) };
\draw[blue,->] (.6,1.5) -- (.8,1.7);
\node[node font=\tiny,color=blue] at (.4,1.4) {$\tilde{C_0}$};
\node[node font=\tiny,color=blue] at (.3,1.1) {$(\tilde{C_0})^2= -2$};
\draw[blue,->] (2.6,1.4) -- (2.35,1.5);
\node[right,node font=\tiny,color=blue] at (2.6,1.4) {$D$};

\draw[thick,color=gray] (-2.5-0.5,-.5) to (1.8+0.5,-.5);
\node[blk] at (-2.5+0.6,-.5) (9){};
\node[below right, node font=\tiny] at (9) {$t$};
\node[blk] at (1,-.5) (10){};
\node[below right, node font=\tiny] at (10) {$0$};
\node[right, node font=\tiny] at (1.8+0.5,-.5) {$\bA^1_t$};

\node[below right,node font=\tiny] at (-2.5,0.1) {$Y_t = \bP^2_t$};
\node[above right, node font=\tiny] at (0,0.2) {$\tilde{X_0}$};
\node[node font=\huge, text=gray] at (-2.5+1.8,1.5) {$\rightsquigarrow$};
\end{tikzpicture}
\end{tabular} \arrow[d,swap,start anchor={[xshift=-2ex]},end anchor={[xshift=-8ex]},"\text{weighted blow up}"] \arrow[rd, start anchor={[xshift=-5ex]}, end anchor={[yshift=3ex]}, "\text{small contraction}"] & \\
\hspace{-1in} \begin{tabular}{c}
\begin{tikzpicture}[gren0/.style = {draw, circle,fill=greener!80,scale=.7},gren/.style ={draw, circle, fill=greener!80,scale=.4},blk/.style ={draw, circle, fill=black!,scale=.2},plc/.style ={draw, circle, color=white!100,fill=white!100,scale=0.02},smt/.style ={draw, circle, color=gray!100,fill=gray!100,scale=0.02},lbl/.style ={scale=.2}] 

\node[smt] at (-2.5+0,0) (1){};
\node[smt] at (-2.5+0,2) (2){};
\node[smt] at (-2.5+1.2,2.7) (3){};
\node[smt] at (-2.5+1.2,0.7) (4){};
\draw [black] plot [smooth cycle, tension=.1] coordinates { (1) (2) (3) (4) };

\draw [thick,blue] plot [smooth, tension=1] coordinates {(-2.5+.4,2) (-2.5+.2,1.4) (-2.5+.8,2) (-2.5+.3,0.8) (-2.5+1.1,1.3) (-2.5+.65,0.5)};
\node[below right,node font=\tiny,text=blue] at (-2.5+0.7,0.8) {smooth octic};

\node[smt] at (0,0) (5){};
\node[smt] at (0,2) (6){};
\node[smt] at (1.2,2.7) (7){};
\node[smt] at (1.2,0.7) (8){};
\draw [black] plot [smooth cycle, tension=.1] coordinates { (5) (6) (7) (8) };

\draw [thick,blue] plot [smooth, tension=1] coordinates { (0.2,1.6) (0.5,1.6) (0.4,.8) };
\draw [thick,blue] plot [smooth, tension=1] coordinates {  (0.4,.8) (.7,1.1) (1,1) };
\draw[blue,->] (.9,.3) -- (.45,.75);
\filldraw[color=violet, fill=violet](0.4,0.8) circle (0.02);
\node[right,color=blue,node font=\tiny] at (.9,.3) {$C_0$: octic curve with};
\node[right,color=blue,node font=\tiny] at (.9,0) {cusp singularity $y^3 +x^{22}$};

\draw[thick,color=gray] (-2.5-0.5,-.5) to (1.2+0.5,-.5);
\node[blk] at (-2.5+0.6,-.5) (9){};
\node[below right, node font=\tiny] at (9) {$t$};
\node[blk] at (0.6,-.5) (10){};
\node[below right, node font=\tiny] at (10) {$0$};
\node[right, node font=\tiny] at (1.2+0.5,-.5) {$\bA^1_t$};

\node[below right,node font=\tiny] at (-2.5,0.1) {$X_t = \bP^2_t$};
\node[below right, node font=\tiny] at (7) {$X_0 = \bP^2_0$};
\node[node font=\huge, text=gray] at (-2.5+1.8,1.5) {$\rightsquigarrow$};
\end{tikzpicture}
\end{tabular} & [-2em] \hspace{-1in} \begin{tabular}{c}
\begin{tikzpicture}[gren0/.style = {draw, circle,fill=greener!80,scale=.7},gren/.style ={draw, circle, fill=greener!80,scale=.4},blk/.style ={draw, circle, fill=black!,scale=.2},plc/.style ={draw, circle, color=white!100,fill=white!100,scale=0.02},smt/.style ={draw, circle, color=gray!100,fill=gray!100,scale=0.02},lbl/.style ={scale=.2}] 

\node[smt] at (-2.5+0,0) (1){};
\node[smt] at (-2.5+0,2) (2){};
\node[smt] at (-2.5+1.2,2.7) (3){};
\node[smt] at (-2.5+1.2,0.7) (4){};
\draw [black] plot [smooth cycle, tension=.1] coordinates { (1) (2) (3) (4) };

\draw [thick,blue] plot [smooth, tension=1] coordinates {(-2.5+.4,2) (-2.5+.2,1.4) (-2.5+.8,2) (-2.5+.3,0.8) (-2.5+1.1,1.3) (-2.5+.65,0.5)};
\node[below right,node font=\tiny,text=blue] at (-2.5+0.7,0.8) {smooth octic};

\node[smt] at (0,0) (5){};
\node[smt] at (0,2) (6){};
\node[smt] at (1.2,2.7) (7){};
\node[smt] at (1.2,0.7) (8){};
\draw [black] plot [smooth, tension=.1] coordinates { (8) (5) (6) (7) };

\node[smt] at (2.5,2.5) (11){};
\node[smt] at (2.5,1) (12){};
\draw [thick,violet] plot [smooth cycle, tension=.1] coordinates { (8) (12) (11) (7) };
\filldraw[color=teal, fill=teal](1.18,2.2) circle (0.05);
\filldraw[color=teal, fill=teal](1.18,1.2) circle (0.05);

\draw [thick,blue] plot [smooth, tension=1] coordinates {  (1.18,1.7) (1.8,1.3) (2.3,1.6) };
\draw[blue,->] (.6,1.9) -- (1,1.7);
\node[node font=\tiny,color=blue] at (.15,2.4) {singular point};
\node[node font=\tiny,color=blue] at (.2,2.1) {image of $\tilde{C}_0$};
\draw[blue,->] (2.6,1.4) -- (2.35,1.5);
\node[right,node font=\tiny,color=blue] at (2.6,1.4) {$D$};
\filldraw[color=blue, fill=blue](1.18,1.7) circle (0.05);


\draw[thick,color=gray] (-2.5-0.5,-.5) to (1.8+0.5,-.5);
\node[blk] at (-2.5+0.6,-.5) (9){};
\node[below right, node font=\tiny] at (9) {$t$};
\node[blk] at (1,-.5) (10){};
\node[below right, node font=\tiny] at (10) {$0$};
\node[right, node font=\tiny] at (1.8+0.5,-.5) {$\bA^1_t$};


\node[below right,node font=\tiny] at (-2.5,0.1) {$Z_t = \bP^2_t$};
\node[above right, node font=\tiny] at (0,0.2) {$\tilde{X_0}^+$};
\node[node font=\huge, text=gray] at (-2.5+1.8,1.5) {$\rightsquigarrow$};
\end{tikzpicture}
\end{tabular}   \\
\end{tikzcd}
\vspace{-.5in}
\caption{Replacement of the rational octic curve with cusp $y^3 = x^{22}$.}
\label{f:octics}
\end{figure}

    If $f: Y \to Z$ is the small contraction of the threefolds illustrated above and $\Delta$ is the divisor giving the family of curves in $Y$ (with image $\Delta_Z$ in $Z$), note that $(K_Y + \frac{1}{2}\Delta) \cdot \widetilde{C}_0 = 1 + \frac{1}{2}(-1) = 0$, so $K_Z + \frac{1}{2} \Delta_Z$ is Cartier and relatively ample over $\bA^1_t$ by computation.  Therefore, the replacement of $(\bP^2, \frac{1}{2}C_0)$ in $\calM_8^{1/2}$ is the non-normal surface $Z_0 = \widetilde{X}_0^+ \cup E$.  The pair $(Z_0, \frac{1}{2}D)$ is log canonical and the double cover of this central fiber is the KSB stable limit of the associated family of Horikawa surfaces.  One could similarly compute the replacement of this pair in $\calM_8^{1/2 +\epsilon}$ after the $K_Y + (\frac{1}{2} + \epsilon)\Delta$ flip of $\widetilde{C}_0$.  
\end{example}

In both of these cases, the smoothness of the replacement curve is tied to the rationality of the unicuspidal curve.  The rationality allows the curve to be the fiber of a small contraction, flip, or flop of the total space in the first step of the minimal model problem.  As the curve only intersects the exceptional divisor transversally, this does not affect the smoothness of the curve $D$ in the exceptional divisor.  

Motivated by these examples, we expect that there is a relationship between non-planar limits of degree $d$ plane curves and unicuspidal rational degree $d$ plane curves where a \textit{wall crossing} as above replaces the rational unicuspidal curve with a smooth non-planar curve.  This problem is explored in more detail in \cite{DS22}, focusing on the case that $d$ is prime.  More broadly, however, to effectively enumerate the walls $\alpha_i$ in the moduli problem, one needs an understanding of the possible plane curve singularities.  Thus, for many applications, the following question is essential: 

\begin{question}\label{q:whatcurves}
    What are the rational unicuspidal plane curves of degree $d$?  
\end{question}

For any degree $d$, there always exists the rational unicuspidal plane curve $x^{d-1}z = y^d$ (Theorem \ref{thm:onepair}) as the generalization of the usual cuspidal cubic.  We are interested in other cuspidal singularities. 
Furthermore, the work in \cite{DS22} suggests that there exist very few examples of \textit{smooth} non-planar limits of smooth plane curves of \textit{prime} degree, related to the existence of prime degree rational unicuspidal plane curves.  This relationship is not sharp, but leads us to investigate the following refinement of the previous question: 

\begin{question}\label{q:whatprimecurves}
    If $d$ is prime, when do there exist rational unicuspidal plane curves of degree $d$ other than $x^{d-1}z = y^d$ (up to isomorphism)?  
\end{question}

It turns out that the answer to this question is closely intertwined with many longstanding open questions in number theory, notably the infinitude of Fibonacci primes and one of Landau's problems on primes of the form $n^2+1$.  We explore this in more detail in Section \ref{s:prime}.

In what follows, we introduce two perspectives that have been historically fruitful to yield partial answers to the problem.  In Section \ref{s:newtonpairs}, we classify curve singularities by combinatorial properties associated to the Newton pairs parameterizing the cusp, and in Section \ref{s:kodairadim}, we connect this perspective with classification of curve singularities by the logarithmic Kodaira dimension of the curve.  The main results of this article are the following. 

\begin{theorem}\label{main1}
    For degree $d \le 30$, there is explicit list of all unicuspidal rational plane curves of degree $d$, enumerated in Table \ref{t:bigtable}.
\end{theorem}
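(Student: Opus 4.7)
The plan is to reduce the classification to a finite enumeration over candidate sequences of Newton pairs parameterizing the cusp, prune via necessary numerical and semigroup conditions, and then verify existence or nonexistence for each surviving candidate.

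First, I would set up the combinatorial framework. The topological type of the cusp on $C$ is determined by its Newton pairs $(p_1,q_1),\ldots,(p_g,q_g)$ with $\gcd(p_i,q_i)=1$; these pairs in turn determine the multiplicity sequence of the singularity via the Euclidean algorithm and hence the delta invariant $\delta$. Since $C$ is rational of arithmetic genus $\binom{d-1}{2}$ and unicuspidal, one has the single Diophantine identity $\delta=\binom{d-1}{2}$ in the $(p_i,q_i)$. The cusp multiplicity $p_1$ is bounded in terms of $d$ by Matsuoka--Sakai-type inequalities (roughly $p_1<d/\sqrt{3}$), and the number $g$ of Newton pairs is likewise bounded, so for $d\le 30$ the candidate set is finite and computationally enumerable.

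Second, I would prune this candidate set using the known necessary conditions for a cusp to occur on a rational plane curve of a given degree. The most restrictive of these are the Fern\'andez de Bobadilla--Luengo--Melle Hern\'andez--N\'emethi semigroup distribution property, the logarithmic Bogomolov--Miyaoka--Yau inequality applied to the minimal log resolution of $(\bP^2,C)$, and Orevkov-type inequalities in the log-general-type regime. Together these eliminate the overwhelming majority of combinatorial candidates.

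Third, for each surviving candidate I would either (a) realize the cusp type by an explicit plane curve, drawing on the infinite families recorded in Theorem~\ref{thm:onepair} (notably the $x^{d-1}z=y^d$ family and the Fibonacci-indexed families illustrated in Example~\ref{quintics}), Cremona equivalence to known lower-degree examples, and the constructions from the literature on rational cuspidal curves, or (b) prove nonexistence via finer invariants (for instance a refined log-BMY computation or explicit analysis of the embedded resolution). The realized cusp types for $d\le 30$ then constitute exactly the entries of Table~\ref{t:bigtable}.

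The principal obstacle is step three: the numerical necessary conditions narrow the candidate list sharply but do not by themselves decide realizability, and in the middle range of degrees a handful of stubborn candidates typically require case-specific arguments, either by producing an explicit Cremona reduction to a known curve or by extracting a contradiction from the combinatorial geometry of the exceptional configuration on the minimal log resolution.
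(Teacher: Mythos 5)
Your proposal follows essentially the same strategy as the paper: a finite enumeration of candidate Newton-pair data controlled by the delta-invariant identity $\delta=\tbinom{d-1}{2}$, pruning by the semigroup distribution/counting property, and realization of the surviving candidates by explicit birational (Cremona-type) reduction to known lower-degree curves. The only cosmetic differences are that the paper quotes the existing complete classifications for one and two Newton pairs and runs its computer search only for three and four pairs (using the bound $(d-1)(d-2)\ge(2^k-1)2^k$ to cap the number of pairs at four when $d\le 30$), and it never needs your contingency of nonexistence arguments or the extra log-BMY/Orevkov pruning, since every candidate surviving the semigroup test turns out to be realizable.
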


Additionally, we make two statements about the possible number of non-isomorphic curves for composite versus prime degree.

\begin{theorem}\label{main2}
    For any integer $d > 1$, there are at least as many non-isomorphic rational unicuspidal plane curves of degree $d$ as there are ordered factorizations of $d$.
\end{theorem}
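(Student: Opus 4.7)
The strategy is to construct, for each ordered factorization $d = d_1 d_2 \cdots d_k$ with each $d_i \ge 2$, a rational unicuspidal plane curve $C_{(d_1,\ldots,d_k)} \subset \bP^2$ of degree $d$ whose cusp has $k$ Newton pairs encoding the factorization, and then to observe that distinct factorizations produce non-isomorphic curves. The base case $k=1$ is the standard curve $\{x^{d-1}z = y^d\}$, which has a single Newton pair at its unique cusp. For the inductive step, assuming that $C_{(d_1,\ldots,d_{k-1})}$ of degree $d/d_k$ has already been constructed, I would build $C_{(d_1,\ldots,d_k)}$ of degree $d$ by a birational modification at the cusp of $C_{(d_1,\ldots,d_{k-1})}$: perform a weighted blow-up of $\bP^2$ with weights calibrated so as to append a new Newton pair $(p_k, q_k)$ whose ``denominator'' $q_k$ reflects $d_k$, and then contract the resulting negative curves via a relative minimal model program back to a new copy of $\bP^2$. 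Equivalently, one may give a direct parameterization $\bP^1 \to \bP^2$, $[s:t] \mapsto [f_0 : f_1 : f_2]$, with $\deg f_i = d$ and the leading monomials of $f_i$ near $[0:1]$ chosen to produce the prescribed Puiseux expansion at the image cusp.

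Distinctness is then essentially immediate from the construction: the ordered Newton pair sequence of a cusp is a topological invariant of the singularity (and hence of the isomorphism class of $(\bP^2, C)$), so two ordered factorizations producing different Newton pair sequences yield non-isomorphic rational unicuspidal curves. By design, the factorization $(d_1, \ldots, d_k)$ can be recovered from the cusp of $C_{(d_1,\ldots,d_k)}$ as an ordered tuple of invariants of the Newton pairs, which ensures the map from ordered factorizations to isomorphism classes is injective.

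The principal obstacle is the inductive construction itself: one must verify that the prescribed Newton pair sequence is \emph{realizable} as a plane curve of degree exactly $d$. This requires both the arithmetic identity that the total $\delta$-invariant of the cusp equals $\binom{d-1}{2}$ and the geometric compatibility of the cusp multiplicity with Bezout's theorem for a degree $d$ curve. Producing an explicit Newton pair recipe $(p_i, q_i)$ associated to each ordered factorization, and verifying the required $\delta$-invariant identity using the multiplicativity $d = d_1 \cdots d_k$ (for instance by telescoping the contribution of each new pair in the inductive step), is the technical heart of the argument.
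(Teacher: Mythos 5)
Your overall strategy coincides with the paper's: both arguments exhibit one rational unicuspidal curve per ordered factorization $d = n_1\cdots n_r$, distinguish the curves by the Newton pair sequences of their cusps, and establish existence by an inductive sequence of blow-ups and blow-downs starting from the curve $x^{d-1}z = y^d$ (the paper realizes the factorizations via the AMS curves of \S\ref{amscurves}, constructed inductively by Lemma \ref{lem:existenceofmostcurves}, which strips off the first factor rather than the last).

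However, as written your proposal has a genuine gap, and you have located it yourself: the inductive construction is never carried out, and the plan you sketch for completing it would not suffice. You propose to verify realizability by checking that the $\delta$-invariant equals $\binom{d-1}{2}$ together with a Bezout-type compatibility of the multiplicity with the degree. These are necessary conditions only. The paper's own Section \ref{s:newtonpairs} is built around the fact that many Newton pair sequences satisfying the $\delta$-invariant identity are \emph{not} realizable by plane curves --- this is exactly why the Borodzik--Livingston semigroup condition (Equation \ref{countingfunction}) is needed to cut down the candidate list, and even that condition does not guarantee existence, which the paper must then establish curve by curve via explicit Cremona transformations. So ``verifying the required $\delta$-invariant identity'' is not the technical heart; the technical heart is an explicit, invertible birational recipe showing that the minimal model program you invoke actually terminates on $\bP^2$ with a curve of the claimed degree and cusp type. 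This is precisely what Lemma \ref{lem:existenceofmostcurves} supplies: for $C$ of degree $(k+1)n$ with multiplicity sequence $(kn, n_{2k}, \overline{m}')$, a concrete chain of blow-ups and contractions (through a Hirzebruch surface) identifies the existence of $C$ with that of a degree $n$ curve with multiplicity sequence $\overline{m}'$. You would also need to write down the explicit Newton pairs attached to a factorization --- in the paper, $(n_1-1,n_1),(n_2,n_1n_2-1),\dots,(n_r,n_{r-1}n_r-1)$ for $n_1>2$, with a separate recipe when $n_1=2$ (note the number of Newton pairs is then $r-1$, not $r$, contrary to your assumption that the cusp has exactly $k$ pairs) --- and check that the factorization is recoverable from these pairs, which is what makes the count injective.
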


\begin{theorem}\label{main3}
    There exist infinitely many prime degrees for which there exist at least two non-isomorphic rational unicuspidal plane curves.  
\end{theorem}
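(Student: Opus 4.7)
The plan is to combine the standard rational unicuspidal curve $x^{d-1}z = y^d$ of degree $d$, which always exists, with a second explicit rational unicuspidal curve of the same degree for infinitely many primes $d$. Since isomorphic plane curves must share the same cusp Newton pairs, producing a second curve whose Newton pair differs from $(d-1, d)$ suffices to exhibit two non-isomorphic rational unicuspidal plane curves of degree $d$.

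The first step is to identify, within the explicit families of Theorem~\ref{thm:onepair} (equivalently within Table~\ref{t:bigtable} from Theorem~\ref{main1}), a one-parameter family of rational unicuspidal plane curves with linear degree $d(n) = an + b$, $\gcd(a, b) = 1$, whose cusp Newton pair depends nontrivially on $n$. The second step is to apply Dirichlet's theorem on primes in arithmetic progressions to conclude that $d(n)$ is prime for infinitely many $n$. The third step is to verify by direct inspection of Newton pairs that for each such prime $d$ the constructed curve is not isomorphic to $x^{d-1}z = y^d$; this is automatic whenever the constructed curve has cusp multiplicity different from $d-1$ or more than one Puiseux pair.

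The principal obstacle is finding the linear-degree family. Classical single-Puiseux-pair constructions force the degree to be a quadratic function of the multiplicity via the genus constraint $(d-1)(d-2) = (a-1)(b-1)$, and Orevkov-type constructions force exponential Fibonacci-like degree growth; in both cases the prime density is governed by open number-theoretic conjectures (Landau's primes of the form $n^2+1$ and the infinitude of Fibonacci primes), which the introduction of this paper singles out as precisely the obstructions to an easier argument. Producing a linear family therefore requires cusps with at least two Newton pairs so that the genus balance becomes sufficiently flexible to absorb the extra parameter. Locating and certifying the existence of such a family---presumably one of the constructions underlying Theorems~\ref{main1} and~\ref{main2}---is the heart of the matter; once it is in hand, the application of Dirichlet and the Newton pair comparison are routine. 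A fallback, if no single linear family is available, is to combine finitely many non-linear families and show they together cover every residue class in some arithmetic progression modulo a fixed integer, which again reduces to Dirichlet.
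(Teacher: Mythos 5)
Your proposal is a strategy outline rather than a proof: the step you yourself flag as ``the heart of the matter''---actually exhibiting a one-parameter family of rational unicuspidal plane curves whose degree is linear in the parameter and whose cusp differs from $(d-1,d)$---is never carried out, and without it nothing is proved. That said, the missing family is present in the paper. Tono's Type I(b) curves (Section \ref{tonocurves}) have degree $d=a^2s+1$ with Newton pairs $(a-1,a),(s,as+1),(a,as+1)$, and their existence for every $a\ge 3$, $s\ge 2$ is certified constructively by Lemma \ref{lem:type1curves}. Fixing $a=3$ gives degree $d=9s+1$, linear in $s$ with $\gcd(9,1)=1$, so Dirichlet produces infinitely many primes $p\equiv 1\pmod 9$; for each such $p=9s+1$ with $s\ge 2$ the Type I(b) curve has cusp multiplicity $(a-1)as=6s\ne p-1$, hence is not isomorphic to $x^{p-1}z=y^p$. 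So your plan can be completed, but as written it is incomplete precisely where the mathematical content lies.

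For comparison, the paper's own proof does not use Dirichlet at all: it invokes the theorem of Baier--Zhao that there are infinitely many primes of the form $a^2s+1$ when \emph{both} $a$ and $s$ vary, and pairs the resulting Tono Type I(a)/I(b) curve with the trivial curve $x^{p-1}z=y^p$. Your route (fix $a$, vary $s$, apply Dirichlet) would reach the same conclusion with a more elementary number-theoretic input, which is a genuine advantage---but only once the family $d=a^2s+1$ and its existence via Lemma \ref{lem:type1curves} are explicitly identified. Your diagnosis of why the single-Newton-pair and Orevkov families fail (quadratic or Fibonacci degree growth, hence open conjectures) is accurate and matches the discussion in Section \ref{s:prime}; your fallback via covering residue classes is unnecessary once the linear family is in hand.
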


Conjecturally, there exist at least three different infinite families of prime degrees for which there exists more than one rational unicuspidal plane curve.  However, we expect for a general prime number $p$, there only exists (up to isomorphism) the rational unicuspidal curve $x^{p-1}z = y^p$. These conjectures and expectations will be discussed in Section \ref{s:prime}.

\vspace{1em}

\noindent \textbf{Acknowledgements.} We would like to thank David Stapleton for helpful conversations.  Research of KD was supported in part by NSF grant DMS-2302163.

\section{Classification of cuspidal curves by their Newton pairs and associated semigroup}\label{s:newtonpairs}

We recall the classical invariants parametrizing a cusp. For additional background references see \cite[Ch. 8]{PlaneAlgCurves}, \cite[Ch. 2]{Liu2014}, and \cite[Ch. 2]{MoeThesis}.

Let $p \in C \subset  X$ be a cuspidal point $p$ on the curve $C$ on a smooth surface $X$.  In an analytic neighborhood of $p$, the curve $C$ can be written parametrically as
\[
(x,y) = (t^a, c_1t^{b_1} + c_2t^{b_2} + \dots)
\]
with $1 < a < b_1 < b_2 < \dots\in \bZ$ such that $a$ does not divide $b_1$, $\gcd(a, b_1, b_2, \dots) = 1$, and $c_i \ne 0$ for each $i$. This parametrization determines the geometry of the curve.  We begin by recalling several preliminary definitions.

\begin{definition}\label{defn:delta}
Let $C$ be a reduced curve with normalization $\nu:C^\nu \rightarrow C$ and let $p\in C$ be a point.  The \textbf{$\delta$-invariant} of $C$ at $p$ is the number
\[
\delta_p=\mathrm{length}_p\left(\nu_*(\calO_{C^\nu})/\calO_C\right).
\]
\end{definition}

The sum of the $\delta$-invariants controls the difference between the arithmetic and geometric genera of $C$:
\[
\chi(\calO_{C^\nu})-\chi(\calO_C) = \sum_{p\in C} \delta_p.
\]

\begin{remark}\label{rmk:rationaldelta}
    If $C$ is a rational unicuspidal plane curve of degree $d$ with cusp at $p \in C$, then we must have the $\delta_p$ is actually equal to the arithmetic genus because $g(\calO_{C^\nu}) = 0$.  Therefore, we have the equality 

\[ \delta_p = \frac{(d-1)(d-2)}{2}. \]
\end{remark}

\begin{definition}\label{defn:multiplicitysequence}
The \textbf{multiplicity sequence} of a cusp is the sequence encoding the multiplicity of the exceptional divisors in the minimal resolution of the cusp.  Let $(X_0, C_0) := (X,C)$ and let
\[
\pi_i: (X_i,C_i) \to (X_{i-1}, C_{i-1})
\]
be the blow up of the singular point of $C_{i-1}$ with exceptional divisor $E_i$. Set $C_i = (\pi_i)^{-1}_* C_{i-1}$.  Let $\pi = \pi_n \circ \pi_{n-1} \dots \circ \pi_1$ be the minimal embedded resolution of the cusp $p$.  The \textit{multiplicity sequence of the cusp} $p \in C$ is the sequence $\overline{m}_p := (m_1, m_2, \dots, m_n)$, where $m_i$ is the multiplicity of the exceptional divisor $E_i$ in $\pi_i^*(C_{i-1})$.  This satisfies the inequalities $m_1 \ge m_2 \ge \dots \ge m_n = 1$.  For simplicity, as $C_{i-1}$ is smooth if and only if $m_i = 1$ (and hence $C_j$ smooth for all $i \le j \le n$), we omit all the multiplicities $m_j =1$ for $j\ge i$.  

To further simplify the notation, we will write multiplicity sequences as 
\[\overline{m}_p = (m_1, m_2, \dots, m_n) = ((k_1)_{i_1}, (k_2)_{i_2}, \dots, (k_n)_{i_n})\] where we collect repeated entries and the subscript indicates the number of times each entry is repeated.  For instance, instead of writing $(4,2,2,2,2)$, we will write $(4,2_4)$.  
\end{definition}

The $\delta$-invariant (Definition \ref{defn:delta}) of a cusp singularity can be read off from its multiplicity sequence: 
\begin{equation}\label{deltamultiplicity}
    \delta_p = \sum_{i = 0}^n \frac{m_i(m_i-1)}{2} 
\end{equation}

\begin{definition}\label{defn:newtonpairs}
Another invariant of the cusp is the collection of \textbf{Newton pairs} that parameterize the cusp.  Define $g_i := \gcd(a,b_1,b_2, \dots ,b_i)$.  Then, there is a finite sequence $i_1 < i_2 < \dots < i_k$ at which $g_i$ decreases, i.e. $i_1 = 1$,
\[
g_{i_1} = \dots = g_{i_2 - 1} > g_{i_2} = \dots =g_{i_3-1} > g_{i_3} = \dots > g_{i_k} = 1.
\]
The monomials $t^{b_{i_j}}$ appearing in the parameterization are called the \textbf{characteristic terms}.

Define $i_0 = 0$, $b_0 = 0$, and $g_{i_0} = a$. For $1 \le j \le k+1$, let $P_j = g_{i_{j-1}}$ and for $1 \le j \le k$, let $Q_j = b_{i_j} - b_{i_{j-1}}$.  The $k$ \textbf{Newton pairs} of the cusp are the $k$ pairs
\[
(p_j, q_j) = \left(\frac{P_j}{P_{j+1}}, \frac{Q_j}{P_{j+1}} \right)\text{ for } 1 \le j \le k .
\]
\end{definition}

We can write $P_j = p_j p_{j+1} \dots p_k$ and $Q_j = q_j p_{j+1} \dots p_k$. The $\delta$-invariant of a cusp singularity can also be expressed in terms of the $P_i$ \cite[2.1.1]{Liu2014}:
\begin{equation}\label{delta}
\delta_p = \frac{1}{2} \left((P_1-1)(Q_1-1) + \sum_{j=2}^k(P_j-1)Q_j \right).
\end{equation}

\begin{remark}
    The pairs $(P_1, Q_1), (P_2, Q_2), \dots , (P_k,Q_k)$ are called the \textbf{Puiseux pairs} of the cusp.  These can be read off from the Newton pairs, and vice versa.  The multiplicity sequence can also be determined from the Newton pairs using an extended Euclidean algorithm as in \cite[Theorem 2.2.6]{MoeThesis}, and vice versa. 
\end{remark}

\begin{definition}\label{defn:semigroup}
The \textbf{semigroup} associated to a cuspidal singularity $p\in C$, denoted $W_p\subset \mathbb{N}$, is the set of all possible local intersection multiplicities of $C$ with other curves at $p$:
\[
W_p:= \{ \dim_{\mathbb{C}} \calO_{C,p} /(f) \mid f \in \mathcal{O}_{C,p}, f \ne 0 \} \subset \mathbb{N}.
\]
\end{definition}

The semigroup $W_p$ has a set of minimal generators $\{0,w_1, w_2, \dots , w_{k+1} \}$, and each $w_i$ can be expressed in terms of the Newton and Puiseux pairs of the cusp:
\[
w_1 = P_1, \quad w_2 = Q_1, \quad w_j = p_{j-2}w_{j-1} + Q_{j-1} \quad 3 \le j \le k+1.
\]

\begin{example}
In the case of a cusp $p\in C$ with one Newton pair $(a,b)$, the curve can be analytically locally parametrized by $(x,y)=(t^a,t^b)$ with $\gcd(a,b)=1$. In this case, $C$ has analytic local equation $y^a=x^b$, $\delta$-invariant $(a-1)(b-1)/2$, and semigroup $W_p=\langle 0,a,b\rangle.$
\end{example}

Finally, if $p\in C$ is a cusp with semigroup $W_p$, define
\[
R_p(k) := \#\{ W_p \cap [0,k) \}
\]
to be the counting function of elements in $W_p$ between $0$ and $k-1$. It has been shown that if a curve $C$ is a cuspidal degree $d$ plane curve, these counting functions must satisfy particular constraints related to the degree $d$ of the curve.  The multiplicity, Newton pairs, and intervals in the semigroup have been widely used to study plane curves, e.g. \cite{MatsuokaSakai,Orevkov,FdBLMHN2,BorodzikLivingston}.

In \cite[Thm. 6.5, Rem. 6.6]{BorodzikLivingston}, Borodzik and Livingston prove the following strong result on existence of rational cuspidal curves: 

\begin{theorem}{\cite{BorodzikLivingston}}
    For a rational cuspidal curve of degree $d$ with $n$ cusps $p_1, \dots, p_n$ and counting functions $R_{p_1}, \dots, R_{p_n}$, then for any $j \in \{ -1, \dots, d-2\}$, 

\begin{equation}\label{countingfunction}
    \min_{\substack{k_1, \dots, k_n \in \mathbb{Z}; \\ k_1 + \dots + k_n = jd + 1}} ( R_{p_1}(k_1) + \dots + R_{p_n}(k_n)) = \frac{(j+1)(j+2)}{2}.
\end{equation}
\end{theorem}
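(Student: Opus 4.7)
Plan. I would interpret $R_{p_i}(k_i)$ as a codimension in the linear system of degree-$j$ polynomials on $\bP^2$ restricted to the normalization, deriving both halves of the claimed equality from a single dimension count.

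Setup and lower bound. Let $\nu \colon \bP^1 \to C$ be the normalization with $q_i := \nu^{-1}(p_i)$. Then $\nu^*\calO_C(j) \cong \calO_{\bP^1}(jd)$, and for $j \le d-2$ the identification $I_C \cong \calO_{\bP^2}(-d)$ gives $H^0(\bP^2, I_C(j)) = 0$, so the restriction
\[
\rho_j \colon H^0(\bP^2, \calO(j)) \hookrightarrow H^0(\bP^1, \calO(jd))
\]
is injective with image $V_j$ of dimension $N := \binom{j+2}{2} = \tfrac{(j+1)(j+2)}{2}$. The crucial algebraic point is that for any nonzero $F \in H^0(\bP^2, \calO(j))$, the order $\ord_{q_i}(\rho_j(F))$ equals the local intersection number $(C \cdot \{F=0\})_{p_i}$, which lies in $W_{p_i}$ by the very definition of the semigroup. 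Now fix $(k_i) \in \bZ^n$ with $\sum k_i = jd+1$ and set $V_j(\vec k) := \{s \in V_j : \ord_{q_i}(s) \ge k_i \text{ for all } i\}$. A nonzero $s \in V_j(\vec k)$ would be a section of $\calO_{\bP^1}(jd)$ with total vanishing at least $jd+1 > jd$, an impossibility; hence $V_j(\vec k) = 0$ and $\codim_{V_j} V_j(\vec k) = N$. Filtering $V_j$ by vanishing order at each $q_i$ and using that the successive quotients can only be nonzero at semigroup values yields $\codim_{V_j} V_j(\vec k) \le \sum_i |W_{p_i} \cap [0, k_i)| = \sum_i R_{p_i}(k_i)$. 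Combining these two gives the inequality $\sum R_{p_i}(k_i) \ge N$.

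Upper bound / achievability. To realize equality I would choose a section $s_* \in V_j$ that is lex-maximal with respect to vanishing at $q_1, q_2, \ldots, q_n$, writing $a_i := \ord_{q_i}(s_*) \in W_{p_i}$. Generically $\sum a_i = jd$ (all zeros concentrated at the cusps); setting $k_1 := a_1 + 1$ and $k_i := a_i$ for $i > 1$ then gives $\sum k_i = jd+1$. One then verifies, cusp by cusp along the lex-ordered filtration of $V_j$, that each jump of dimension corresponds to a distinct semigroup element below $k_i$ at the appropriate cusp, so the codimension inequality above is saturated and $\sum R_{p_i}(k_i) = N$.

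Main obstacle. The lower bound is essentially a one-shot dimension count. The genuine difficulty lies in achievability: the inequality $\codim_{V_j} V_j(\vec k) \le \sum R_{p_i}(k_i)$ is in general strict, and forcing tightness requires that every semigroup element of $W_{p_i}$ below $k_i$ actually arise as a vanishing order of some section in $V_j$ at $q_i$. For unicuspidal $C$ this reduces to the identity $R_p(jd + 1) = N$, which can be extracted from the Gorenstein symmetry of $W_p$ together with $\delta = \binom{d-1}{2}$ and $d \in W_p$. In the multi-cusp case one likely needs an additional ingredient—either an inductive lex-max argument or a Serre duality on $\bP^1$—to guarantee that no small semigroup element is "missed" by the restricted linear series $V_j$.
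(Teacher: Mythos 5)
The paper does not prove this statement at all: it is imported verbatim from \cite[Thm.~6.5, Rem.~6.6]{BorodzikLivingston}, so there is no internal proof to compare against. Judged on its own terms, your first half is correct and standard. The restriction map $H^0(\bP^2,\calO(j))\hookrightarrow H^0(\bP^1,\calO(jd))$ is injective for $j\le d-2$ (indeed for $j\le d-1$, since $C$ is irreducible of degree $d$), vanishing orders of restricted sections at $q_i$ are local intersection multiplicities and hence lie in $W_{p_i}$, and the filtration argument gives $\binom{j+2}{2}=\codim V_j(\vec k)\le \sum_i R_{p_i}(k_i)$ for every tuple with $\sum k_i = jd+1$. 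This yields the inequality $\min \ge \frac{(j+1)(j+2)}{2}$, which is the elementary direction (essentially the B\'ezout-type argument already present in the work of Fern\'andez de Bobadilla--Luengo--Melle-Hern\'andez--N\'emethi, where the full equality was only conjectured).

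The genuine gap is the reverse inequality, and your sketch does not close it. The claim that a lex-maximal section $s_*$ ``generically'' satisfies $\sum_i \ord_{q_i}(s_*)=jd$ is unjustified: there is no reason a degree-$j$ form maximizing vanishing at the cusps should have \emph{all} of its $jd$ intersection points with $C$ concentrated there, and even if such a section existed, you would still need every element of $W_{p_i}\cap[0,k_i)$ to be realized as a vanishing order of some section of the restricted series $V_j$ at $q_i$ --- precisely the statement whose failure makes your codimension inequality strict in general. This achievability direction is the actual content of Borodzik--Livingston's theorem, and their proof is not algebro-geometric: it uses Heegaard Floer $d$-invariants of the boundary three-manifold of a tubular neighborhood of $C$ (a surgery on a connected sum of algebraic knots), combined with the fact that the complement of that neighborhood in $\CC P^2$ is a rational homology ball because $C$ is rational and unibranch at each singular point. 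The resulting constraints on the $d$-invariants of both pieces pin down the minimum exactly. No purely algebraic replacement for this step is known, so your ``Main obstacle'' paragraph correctly identifies the missing ingredient but the proposed fixes (Gorenstein symmetry of $W_p$, Serre duality on $\bP^1$) do not supply it; in particular, symmetry of the semigroup plus $\delta=\binom{d-1}{2}$ and $d\in W_p$ verifies the identity $R_p(jd+1)=\binom{j+2}{2}$ only at the extreme values of $j$, not for all $j\in\{-1,\dots,d-2\}$.
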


With this background, we will classify low degree curves by the number of Newton pairs parameterizing the cusp using Equation \ref{countingfunction}.  Currently, there exists the following classification of unicuspidal curves with cusp parameterized by one or two Newton pairs.  We denote by $\phi_j$ the Fibonacci numbers, defined by $\phi_0 =0$, $\phi_1 =1$, and $\phi_{j+1} = \phi_j + \phi_{j-1}$.  

\begin{theorem}{\cite{FdBLVMHN1,FdBLMHN2}}\label{thm:onepair}
    A rational unicuspidal plane curve of degree $d$ with singularity parametrized by one Newton pair $(p_1,q_1)$ exists if and only if $(d,p_1,q_1)$ is in the following list. 
        \begin{enumerate}
            \item $d$ is arbitrary and $(p_1,q_1) = (d-1, d)$
            \item $d$ is even and $(p_1,q_1) = (d/2, 2d-1)$
            \item $d = \phi_{j-1}^2+1 = \phi_{j-2}\phi_j$ where $j \ge 5$ is odd and $(p_1,q_1) = (\phi_{j-2}^2, \phi_j^2) $
            \item $d = \phi_{j}$ where $j \ge 5$ is odd and $(p_1,q_1) = (\phi_{j-2}, \phi_{j+2}) $
            \item $d = 8$ and $(p_1,q_1) = (3,22)$
            \item $d = 16$ and $(p_1,q_1) = (6,43)$.
        \end{enumerate}
\end{theorem}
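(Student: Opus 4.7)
The plan is to combine the rationality/genus constraint with the Borodzik--Livingston inequality to reduce the classification to a tractable Diophantine problem in $(d, a, b)$, where $(a, b) := (p_1, q_1)$ is the unique Newton pair.

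First, the rationality forces $\delta_p = (d-1)(d-2)/2$ by Remark \ref{rmk:rationaldelta}, while formula (\ref{delta}) for one Newton pair gives $\delta_p = (a-1)(b-1)/2$. Hence
\[
(a-1)(b-1) = (d-1)(d-2), \qquad \gcd(a, b) = 1, \qquad 1 < a < b.
\]
This severely restricts the triples $(d, a, b)$ and will be the primary equation of the proof.

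Second, for a single Newton pair the semigroup is $W_p = \langle a, b\rangle$, so the counting function reads $R_p(k) = \#\{(s,t) \in \bN^2 : sa+tb < k\}$. Specializing (\ref{countingfunction}) to $n=1$ yields
\[
R_p(jd+1) = \binom{j+2}{2} \quad \text{for all } j \in \{-1, 0, \dots, d-2\}.
\]
Geometrically, this says the number of lattice points of $\bN^2$ strictly below the line $sa + tb = jd+1$ equals $\binom{j+2}{2}$, a condition that ties the slope $b/a$ very tightly to the multiples of $d$.

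Third, I would perform a case analysis on $a$, organized by the Euclidean/continued fraction expansion of $b/a$. The trivial cases $a = d-1$ and $a = d/2$ yield families (1) and (2) immediately, with the constraint $(a-1)(b-1)=(d-1)(d-2)$ determining $b$ and the Borodzik--Livingston conditions holding automatically. For other $a$, at each $j$ one tracks how many elements of $\langle a,b\rangle$ lie in the window $[jd, (j+1)d)$ and insists on the count $\binom{j+2}{2} - \binom{j+1}{2} = j+1$. Iterating this condition through the Euclidean algorithm of $(a, b)$ forces the partial quotients to satisfy a two-term linear recursion whose only integer solutions are Fibonacci, yielding families (3) and (4). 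A final inspection of the remaining short continued fractions, combined with explicit checks on the residual Diophantine inequalities $a \le d$ and $(a-1)(b-1)=(d-1)(d-2)$, produces the two sporadic entries (5) and (6).

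The main obstacle is the Fibonacci step: proving that the Borodzik--Livingston equalities at consecutive values of $j$ are only simultaneously realizable when the convergents of $b/a$ are precisely ratios of Fibonacci numbers. I expect the cleanest route is to set up a Pell-type equation $x^2 - xy - y^2 = \pm 1$ directly from the difference of consecutive BL constraints and then invoke the classical description of its solutions. The converse direction (existence of each curve) is routine: case (1) is the generalized cuspidal cubic $x^{d-1}z = y^d$; the remaining cases admit explicit rational parametrizations, or can be produced as birational images of low-degree unicuspidal curves under Cremona transformations, and the verification that each has the prescribed Newton pair is a local computation at the cusp.
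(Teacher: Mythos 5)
First, a point of orientation: the paper does not prove Theorem \ref{thm:onepair} at all --- it is imported verbatim from \cite{FdBLVMHN1,FdBLMHN2}, so the only ``proof'' in the paper is the citation. Your reconstruction is therefore being measured against the arguments in those sources, and in outline you have the right shape: the genus constraint $(a-1)(b-1)=(d-1)(d-2)$ from Remark \ref{rmk:rationaldelta} together with the semigroup distribution property (\ref{countingfunction}) specialized to $n=1$, followed by a reduction to Pell-type equations whose solutions are Fibonacci numbers, is essentially how the classification for one Puiseux pair is carried out in the literature. So the strategy is sound and matches the source.

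However, as a proof the proposal has two genuine gaps. The first is that the entire mathematical content of the ``only if'' direction is concentrated in the step you defer: showing that the equalities $R_p(jd+1)=\binom{j+2}{2}$ for \emph{all} $j\in\{-1,\dots,d-2\}$, combined with $(a-1)(b-1)=(d-1)(d-2)$, force $b/a$ to have Fibonacci convergents or to be one of the finitely many exceptions. Saying you ``expect the cleanest route is to set up a Pell-type equation'' is a research plan, not an argument; in particular it is not clear from your sketch why the sporadic solutions $(d,a,b)=(8,3,22)$ and $(16,6,43)$ survive the counting conditions while no other short continued fractions do, and this requires a careful finite analysis, not mere ``inspection.'' The second gap is the existence direction, which you call routine: it is not. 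Families (1) and (2) are indeed elementary (they are AMS curves, cf.\ Lemma \ref{lem:existenceofmostcurves}), but families (3) and (4) are the Kashiwara curves of types $II_{ge}$ and $II_{sp}$, and the degree $8$ and $16$ curves are the Orevkov curves $C_4$ and $C_4^*$ with $\kappa(\bP^2-C)=2$; none of these admits an obvious parametrization, and their constructions (iterated Cremona transformations adapted to a cuspidal cubic, in Orevkov's case) are the hard half of the theorem. A complete proof must either reproduce those constructions or cite them explicitly; ``the remaining cases admit explicit rational parametrizations'' is not something you can verify by a local computation at the cusp, since the issue is global existence in $\bP^2$, not the local singularity type.
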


\begin{theorem}{\cite{Liu2014,Bodnar2016}}\label{thm:twopairs}
    A rational unicuspidal plane curve of degree $d$ with singularity parametrized by two Newton pairs $(p_1,q_1), (p_2,q_2)$, exists if and only if $(d, p_i,q_i)$ appear in the following list. 
        \begin{enumerate}
            \item $d = \phi_{2k-1} \phi_{2k+1}(l\phi_{2k-1}^2+\phi_{2k-3}^2)$ for some $k \ge 2$ and $l \ge 0$ but $(k,l) \ne (2,0)$, and 
            
            $(p_1,q_1),(p_2,q_2) = (l\phi_{2k-1}^2 + \phi_{2k-3}^2, l\phi_{2k+1}^2 + \phi_{2k-1}^2 + 2),(\phi_{2k-1}^2,l\phi_{2k-1}^2 + \phi_{2k-3}^2) $ 
            \item $d =  \phi_{2k+1}(l\phi_{2k-1}^2+\phi_{2k-3}^2)$ for some $k \ge 3$ and $l \ge 0$, and 
            
            $(p_1,q_1),(p_2,q_2) = (l\phi_{2k-1}^2 + \phi_{2k-3}^2, l\phi_{2k+1}^2 + \phi_{2k-1}^2 + 2),(\phi_{2k-1},l\phi_{2k-1} + \phi_{2k-5}) $
            \item $d = nm$ for some $n \ge 3, m\ge 2$, and $(p_1,q_1),(p_2,q_2) = (n-1,n),(m,nm-1)$
            \item $d = 2nm$ for some $n \ge 2, m\ge 2$, and $(p_1,q_1),(p_2,q_2) = (n,4n-1),(m,nm-1)$
            \item $d = n^2+1$ for some $n \ge 3$, and $(p_1,q_1),(p_2,q_2) = (n-1,n),(n,(n+1)^2)$
            \item $d = 8n^2+4n+1$ for some $n \ge 2$, and $(p_1,q_1),(p_2,q_2) = (n,4n+1),(4n+1,(2n+1)^2)$
            \item $d = \phi_{4k+2}$ for some $k \ge 2$, $(p_1,q_1),(p_2,q_2) = (\phi_{4k}/3, \phi_{4k+4}/3),(3,1)$
            \item $d = 2\phi_{4k+2}$ for some $k \ge 2$, $(p_1,q_1),(p_2,q_2) = (\phi_{4k}/3, \phi_{4k+4}/3),(6,1)$
        \end{enumerate}
\end{theorem}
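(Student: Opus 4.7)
The plan is to construct, for infinitely many primes $p$, a rational unicuspidal plane curve of degree $p$ whose cusp differs from the single Newton pair $(p-1,p)$ carried by the standard curve $x^{p-1}z = y^p$. Because the Newton-pair data of a cuspidal singularity is a projective invariant of the pair, any such alternate curve is automatically non-isomorphic to the standard one, so the strategy reduces to producing enough alternate cusps realized in degree $p$.

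First I would comb through Theorems \ref{thm:onepair} and \ref{thm:twopairs} and isolate the cases whose degree formula can take prime values. Cases \ref{thm:twopairs}.1--\ref{thm:twopairs}.4, \ref{thm:twopairs}.7 and \ref{thm:twopairs}.8 are immediately ruled out, because the formula for $d$ always factors nontrivially (a Fibonacci divisor $\ge 2$, an explicit $nm$ factorization, or an even factor $2\phi_{4k+2}$). What remain are three one-parameter families capable of producing a prime $d$: case \ref{thm:onepair}.4 giving $d = \phi_j$ for odd $j\ge 5$ with second cusp $(\phi_{j-2}, \phi_{j+2})$; case \ref{thm:twopairs}.5 giving $d = n^2 + 1$ for $n \ge 3$ with cusp $(n-1,n),(n,(n+1)^2)$; and case \ref{thm:twopairs}.6 giving $d = 8n^2 + 4n + 1$ for $n \ge 2$ with cusp $(n,4n+1),(4n+1,(2n+1)^2)$. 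A direct comparison of Newton pairs in each instance shows the produced curve is not isomorphic to the standard cusp curve of the same degree (either because it has two Newton pairs and the standard has one, or because the single Newton pair $(\phi_{j-2},\phi_{j+2})$ differs from $(\phi_j-1,\phi_j)$). Thus main3 reduces to the number-theoretic statement that at least one of the three resulting sets of prime degrees is infinite.

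The main obstacle is exactly this reduction: each of the three sets is the subject of a notorious open problem (infinitude of Fibonacci primes, Landau's fourth problem on primes $n^2+1$, and a special case of Bunyakovsky's conjecture), so none can be settled directly. To bypass this, I would try to enlarge the supply of prime-degree examples beyond the one- and two-Newton-pair classification by constructing a new infinite family of rational unicuspidal plane curves of prime degree whose cusps have three or more Newton pairs. The most natural route is a tower of Cremona transformations centered at the cusp: starting from a small-degree rational unicuspidal curve, iterate so that the degree sequence is explicit, and choose the construction so that the degrees lie in an arithmetic progression with $\gcd$ equal to $1$, making Dirichlet's theorem applicable. Verifying that such a tower exists, that each term is genuinely rational and unicuspidal, and that the degree sequence intersects infinitely many primes is where I expect the serious technical work to lie; the combinatorial reduction via Theorems \ref{thm:onepair} and \ref{thm:twopairs} is comparatively routine.
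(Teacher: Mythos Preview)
Your proposal is not a proof of Theorem~\ref{thm:twopairs}; that result is cited from \cite{Liu2014,Bodnar2016} and the paper offers no proof of it. What you have actually sketched is an approach to Theorem~\ref{main3}, using Theorem~\ref{thm:twopairs} as input.

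Treated as an attempt at Theorem~\ref{main3}, there is a genuine gap. Your reduction via Theorems~\ref{thm:onepair} and~\ref{thm:twopairs} to the three one-parameter families $d = \phi_j$, $d = n^2+1$, $d = 8n^2+4n+1$ is correct, and you rightly observe that each individually leads to an open conjecture. But your proposed escape---a Cremona tower producing degrees in an arithmetic progression so that Dirichlet applies---has no realistic prospect: the birational constructions in this subject (e.g.\ Lemmas~\ref{lem:existenceofmostcurves} and~\ref{lem:type1curves}) scale the degree multiplicatively, not additively, and rational unicuspidal curves are far too sparse to occupy a full arithmetic progression.

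The missing idea is to look beyond two Newton pairs to Tono's $\kappa = 1$ classification (\S\ref{tonocurves}). His Type~I(b) curves have degree $a^2 s + 1$ with \emph{two} free integer parameters $a \ge 3$, $s \ge 2$ and cusp given by three Newton pairs; together with Type~I(a) ($s=1$, two Newton pairs) this yields the two-parameter family $\{a^2 s + 1 : a \ge 3,\ s \ge 1\}$. A theorem of Baier--Zhao \cite{BaierZhao} guarantees that this set contains infinitely many primes, and Theorem~\ref{main3} follows at once without touching any conjecture. By restricting attention to at most two Newton pairs you confined the degree to one-parameter polynomial families, which is exactly where the hard open problems live.
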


Beyond the case of one or two Newton pairs, very little is known.  The first main result of this article is the following. 

\begin{theorem}\label{thm:threepairs}
A unicuspidal rational curve of degree $ d \le 30$, with cusp parametrization given by 3 Newton pairs, is one of the curves in Table \ref{t:threepairs}. 
\begin{table}[h]
\caption{Rational unicuspidal curves with cusp parameterized by three Newton pairs.}
\label{t:threepairs}
\begin{tabular}{|c|c|c|c|}
    \hline degree & Newton pairs & sample parametrization\tablefootnote{This parametrization only includes the characteristic terms in the local equation of the cusp, which are the data contained in the Newton pairs.  A cusp with parametrization $(x,y) = (t^a, t^{b_{i_1}} + t^{b_{i_2}}+ \dots )$ where $g_{i_j} > g_{i_{j+1}}$ (notation as in Definition \ref{defn:newtonpairs}), has the same multiplicity sequence and collection of Newton pairs as $(x,y) = (t^a, u_{i_1}t^{b_{i_1}} + u_{i_2}t^{b_{i_2}}+ \dots )$ where each $u_{i_j}$ is a unit in the power series ring such that $\deg u_{i_j}t^{b_{i_j}} < b_{i_{j+1}}$ and for any monomial $t^{b}$ appearing in $u_{i_j}t^{b_{i_j}}$, $\gcd(a,b_{i_j},b) = \gcd(a,b_{i_j})$.\label{Footnote}} & multiplicity sequence  \\
    \hline 12 & $(2, 3), (2, 5), (2, 3)$ & $(x,y) = (t^8, t^{12} + t^{22} + t^{25})$ & $(8, 4_4, 2_3)$  \\
    \hline 16 & $(2, 7), (2, 3), (2, 3)$ & $(x,y) = (t^8, t^{28} + t^{34} + t^{37})$ & $(8_3, 4_3, 2_3)$  \\
    \hline 16 & $(3, 4), (2, 7), (2, 3)$ & $(x,y) = (t^{12}, t^{16} + t^{30} + t^{33})$ & $(12, 4_6, 2_3)$  \\
    \hline 18 & $(2, 3), (2, 5), (3, 5)$ & $(x,y) = (t^{12}, t^{18} + t^{33} + t^{38})$ & $(12, 6_4, 3_3, 2)$  \\
    \hline 18 & $(2, 3), (3, 8), (2, 5)$ & $(x,y) = (t^{12}, t^{18} + t^{34} + t^{39})$ & $(12, 6_4, 4, 2_4)$  \\
    \hline 19 & $(2, 3), (2, 7), (3, 7)$ & $(x,y) = (t^{12}, t^{18} + t^{39} + t^{46})$ & $(12, 6_5, 3_4)$  \\
    \hline 20 & $(4, 5), (2, 9), (2, 3)$ & $(x,y) = (t^{16}, t^{20} + t^{38} + t^{41})$ & $(16, 4_8, 2_3)$  \\ 
    \hline 24 & $(2, 7), (2, 3), (3, 5)$ & $(x,y) = (t^{12}, t^{42} + t^{51} + t^{56})$ & $(12_3, 6_3, 3_3, 2)$ \\
    \hline 24 & $(2, 7), (3, 5), (2, 5)$ &  $(x,y) = (t^{12}, t^{42} + t^{52} + t^{57})$ & $(12_3, 6_3, 4, 2_4)$  \\
    \hline 24 & $(3, 11), (2, 5), (2, 3)$ &  $(x,y) = (t^{12}, t^{44} + t^{54} + t^{57})$ & $(12_3, 8, 4_4, 2_3)$  \\
    \hline 24 & $(2, 3), (2, 5), (4, 7)$ & $(x,y) = (t^{16}, t^{24} + t^{44} + t^{51})$& $(16, 8_4, 4_3, 3)$  \\
    \hline 24 & $(2, 3), (4, 11), (2, 7)$ & $(x,y) = (t^{16}, t^{24} + t^{46} + t^{53})$ & $(16, 8_4, 6, 2_6)$ \\
    \hline 24 & $(3, 4), (2, 7), (3, 5)$ & $(x,y) = (t^{18}, t^{24} + t^{45} + t^{50})$ & $(18, 6_6, 3_3, 2)$  \\
    \hline 24 & $(3, 4), (3, 11), (2, 5)$ & $(x,y) = (t^{18}, t^{24} + t^{46} + t^{51})$ & $(18, 6_6, 4, 2_4)$ \\
    \hline 24 & $(5, 6), (2, 11), (2, 3)$ & $(x,y) = (t^{20}, t^{24} + t^{46} + t^{49})$ & $(20, 4_{10}, 2_3)$ \\
    \hline 27 & $(2, 3), (3, 8), (3, 8)$ & $(x,y) = (t^{18}, t^{27} + t^{51} + t^{59})$ & $(18, 9_4, 6, 3_4, 2)$  \\
    \hline 28 & $(2, 3), (3, 10), (3, 10)$ & $(x,y) = (t^{18}, t^{27} + t^{57} + t^{67})$ & $(18, 9_5, 3_6)$  \\
    \hline 28 & $(6, 7), (2, 13), (2, 3)$ & $(x,y) = (t^{24}, t^{28} + t^{54} + t^{57})$ & $(24, 4_{12}, 2_3)$  \\
    \hline 30 & $(2, 3), (2, 5), (5, 9)$ & $(x,y) = (t^{20}, t^{30} + t^{55} + t^{64})$ & $(20, 10_4, 5_3, 4)$  \\
    \hline 30 & $(2, 3), (5, 14), (2, 9)$ & $(x,y) = (t^{20}, t^{30} + t^{58} + t^{67})$ & $(20, 10_4, 8, 2_8)$  \\
    \hline 30 & $(4, 5), (2, 9), (3, 5)$ & $(x,y) = (t^{24}, t^{30} + t^{57} + t^{62})$ & $(24, 6_8, 3_3, 2)$  \\
    \hline 30 & $(4, 5), (3, 14), (2, 5)$ & $(x,y) = (t^{24}, t^{30} + t^{58} + t^{63})$ & $(24, 6_8, 4, 2_4)$  \\
    \hline
\end{tabular}
\end{table}   
\end{theorem}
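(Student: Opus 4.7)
The plan is to first enumerate all triples of Newton pairs $(p_1,q_1), (p_2,q_2), (p_3,q_3)$ compatible with the necessary arithmetic conditions for a rational unicuspidal curve of degree $d \le 30$, and then to verify existence for each surviving candidate by exhibiting an explicit parametrization.

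For the enumeration, the main constraint is the genus equation from Remark \ref{rmk:rationaldelta} combined with the $\delta$-formula (\ref{delta}), namely
$$\frac{(d-1)(d-2)}{2} = \frac{1}{2}\bigl((P_1-1)(Q_1-1) + (P_2-1)Q_2 + (P_3-1)Q_3\bigr),$$
where $P_1 = p_1p_2p_3$, $P_2 = p_2p_3$, $P_3 = p_3$, $Q_1 = q_1p_2p_3$, $Q_2 = q_2 p_3$, and $Q_3 = q_3$, subject to $\gcd(p_j,q_j) = 1$, $p_j \ge 2$, and the monotonicity conditions on the characteristic exponents encoded in Definition \ref{defn:newtonpairs}. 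Since $P_1$ equals the multiplicity of the cusp, which is bounded above by $d-1$ for an irreducible degree $d$ plane curve, we have $8 \le p_1p_2p_3 \le d-1$, which drastically bounds the search space. A finite case analysis (implementable as a short computer search) then produces the complete list of candidate triples satisfying the $\delta$-equation.

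For each candidate I would then apply the Borodzik--Livingston inequality (\ref{countingfunction}) with $n=1$: the semigroup $W_p = \langle P_1, Q_1, w_3, w_4 \rangle$ has generators computed by the explicit recurrence following Definition \ref{defn:semigroup}, and I check that $R_p(jd+1) = (j+1)(j+2)/2$ for every $-1 \le j \le d-2$. Experience with the one- and two-Newton-pair classifications (Theorems \ref{thm:onepair} and \ref{thm:twopairs}) suggests that this is a very restrictive criterion and will eliminate most candidates satisfying only the $\delta$-equation. For each surviving triple, existence is confirmed by writing down a local parametrization of the form $(x,y) = (t^a, t^{b_{i_1}} + t^{b_{i_2}} + t^{b_{i_3}})$ as recorded in Table \ref{t:threepairs}, projectively completing to a degree $d$ map $\mathbb{P}^1 \to \mathbb{P}^2$, and verifying that the image has a unique singular point with the prescribed multiplicity sequence. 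Alternatively, many such curves arise via Cremona transformations from the simpler unicuspidal curves of Theorems \ref{thm:onepair}--\ref{thm:twopairs}, which offers a more conceptual construction.

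The main obstacle is the existence step: the necessary numerical conditions produce a short finite list, but no general sufficiency criterion is available for cuspidal realization on plane curves, so every survivor of the enumeration must be realized individually and every non-realized candidate (should any remain after applying (\ref{countingfunction})) must be excluded by an additional obstruction. The bookkeeping required to ensure both that no candidate is missed by the enumeration and that every candidate in the final table admits an explicit degree $d$ realization is the most delicate part of the argument, and becomes considerably heavier than in the one- or two-Newton-pair cases since the number of free parameters $p_1,q_1,p_2,q_2,p_3,q_3$ grows with the number of characteristic terms.
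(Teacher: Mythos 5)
Your enumeration step matches the paper's: a finite computer search over characteristic exponents constrained by the $\delta$-equation from Remark \ref{rmk:rationaldelta} and formula (\ref{delta}), followed by the Borodzik--Livingston semigroup counting criterion (\ref{countingfunction}). (The paper also first invokes the bound $(d-1)(d-2) \ge (2^k-1)2^k$ from \cite{DS22} to rule out $k \ge 5$ Newton pairs, but for the three-pair statement your multiplicity bound $p_1p_2p_3 \le d-1$ plays the same finiteness role.) The genuine gap is in your existence step. You propose to confirm existence by ``writing down a local parametrization $(x,y) = (t^a, t^{b_{i_1}} + t^{b_{i_2}} + t^{b_{i_3}})$, projectively completing to a degree $d$ map $\mathbb{P}^1 \to \mathbb{P}^2$, and verifying that the image has a unique singular point.'' This does not work: the entries in the table are \emph{local analytic} parametrizations of the cusp, not global parametrizations of the curve. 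For instance, the degree $12$ entry has local data $(t^8, t^{12}+t^{22}+t^{25})$; naively completing this to a map $\mathbb{P}^1 \to \mathbb{P}^2$ produces a curve of degree $25$, not $12$, with no control over singularities away from the cusp. There is no general sufficiency criterion that upgrades a numerically admissible cusp type to an actual unicuspidal plane curve of the prescribed degree, and you acknowledge this obstacle without resolving it.

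The paper closes this gap with two explicit birational constructions. Lemma \ref{lem:existenceofmostcurves} shows that a degree $(k+1)n$ unicuspidal curve with multiplicity sequence $(kn, n_{2k}, \overline{m}')$ exists if and only if a degree $n$ unicuspidal curve with multiplicity sequence $\overline{m}'$ exists, via a reversible sequence of blow ups and blow downs through Hirzebruch surfaces; iterating this reduces all but two table entries to known curves of degree at most $6$ listed in \cite[Table 1]{DS22}. The remaining two entries (degree $19$ and the first degree $28$ curve, of Tono Type I(b)) are produced by Lemma \ref{lem:type1curves}, which constructs a degree $a^2s+1$ curve with multiplicity sequence $((a-1)as, as_{2a-1}, a_{2s})$ from the curve $zx^a = y^{a+1}$ by a prescribed chain of monoidal transformations. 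Your passing mention of Cremona transformations is in the right spirit, but to make the argument complete you would need to specify and verify these reductions entry by entry, since they --- and not the local parametrizations --- are what actually establish existence.
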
 

For low degree curves, we also show that there is only one curve with more than three Newton pairs. 

\begin{theorem}\label{thm:fourpairs}
    For degree $d \le 30$, the only unicuspidal rational curve of degree $d$ with cusp given by $\ge 4$ Newton pairs is the curve in Table \ref{t:fourpairs}.  
\begin{table}[h]
\caption{Rational unicuspidal curves with cusp parameterized by at least four Newton pairs.}
\label{t:fourpairs}
\begin{tabular}{|c|c|c|c|}
    \hline degree & Newton pairs & sample parametrization\tablefootnote{See \cref{Footnote}.} & multiplicity sequence  \\
    \hline 24 & $(2, 3), (2, 5), (2, 3), (2,3)$ & $(x,y) = (t^{16}, t^{24} + t^{44} + t^{50}+t^{53})$ & $(16,8_4,4_3,2_3)$  \\
    \hline
\end{tabular}
\end{table}
\end{theorem}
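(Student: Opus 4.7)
The plan is to mirror the combinatorial search used for Theorem \ref{thm:threepairs}: combine the rationality constraint $\delta_p=\binom{d-1}{2}$ (Remark \ref{rmk:rationaldelta}) with the Borodzik--Livingston semigroup equality \eqref{countingfunction} to narrow the possibilities to a finite list, then exhibit the sole survivor.

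First, I would bound the degree. Any cusp parametrized by $k \ge 4$ Newton pairs has multiplicity $m = P_1 = p_1 p_2 \cdots p_k \ge 2^k \ge 16$. Combined with the bound $m \le d-1$ obtained from $\binom{m}{2} \le \delta_p = \binom{d-1}{2}$, this gives $d \ge 17$, leaving only fourteen degrees to check. Within this range, for each admissible chain $P_1 > P_2 > \cdots > P_k = 1$ with $P_1 \le d-1$ and $k \ge 4$ arising from a choice of $p_j \ge 2$, the $\delta$-invariant formula \eqref{delta} becomes a linear Diophantine equation in $q_1,\ldots,q_k$ subject to $\gcd(p_j,q_j)=1$, $q_1 > p_1$, and positivity; this yields a small explicit list of candidates for each $d$. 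For example, when $d=24$ one is forced into $P_1=16$ with all $p_j = 2$, reducing the $\delta$-equation to $120 q_1 + 28 q_2 + 6 q_3 + q_4 = 521$ with each $q_j$ odd and $q_1 \ge 3$, leaving only a handful of tuples.

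For each such candidate I would compute the semigroup $W_p$ from its minimal generators using
\[
w_1 = P_1, \quad w_2 = Q_1, \quad w_j = p_{j-2}w_{j-1} + Q_{j-1} \ \text{for } j \ge 3,
\]
and then test whether $R_p(jd+1) = \binom{j+2}{2}$ for every $j \in \{-1,0,\ldots,d-2\}$, which is the $n=1$ instance of \eqref{countingfunction}. I expect this strong set of equalities to eliminate every candidate except the degree-$24$ tuple $(2,3),(2,5),(2,3),(2,3)$ appearing in Table \ref{t:fourpairs}, whose semigroup has minimal generators $16, 24, 68, 142, 287$. Existence for this one is then certified by the explicit parametrization in the table; globally, the rational plane curve can be produced via an iterated quadratic/Cremona transformation starting from a two-Newton-pair curve from case (3) or (4) of Theorem \ref{thm:twopairs}, the additional pairs being introduced by blowups in the appropriate tangent direction.

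The main obstacle is computational bookkeeping: for intermediate degrees (in particular $d \in \{24,\ldots,30\}$) the number of Newton-pair tuples satisfying only the $\delta$-constraint is several dozen, and for each of them the Borodzik--Livingston equality must be checked at every $j$, a verification of size roughly $d$. In practice this is handled by computer, and the proof is essentially a finite verification rather than a closed-form argument; a conceptual explanation for the near-uniqueness of four-Newton-pair rational unicuspidal curves in low degree appears out of reach with current techniques.
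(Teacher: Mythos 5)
Your proposal follows essentially the same route as the paper: first bound the number of Newton pairs in terms of $d$ (your inequality $2^k \le P_1 \le d-1$ is a slightly different but equivalent way to rule out $k \ge 5$ for $d \le 30$; the paper instead quotes $(d-1)(d-2) \ge (2^k-1)2^k$ from \cite{DS22}), then run a finite search over Newton-pair tuples using the $\delta$-invariant constraint \ref{delta} together with the Borodzik--Livingston counting property \ref{countingfunction}, exactly as the paper does by computer in Appendix \ref{appendix}. Your worked reduction for $d=24$ is correct: $P_1=16$ with all $p_j=2$ is forced, the $\delta$-equation becomes $120q_1+28q_2+6q_3+q_4=521$ with each $q_j$ odd and $q_1\ge 3$, and the surviving tuple has semigroup generators $16,24,68,142,287$.

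The one step you should repair is existence. A local parametrization of the cusp does \emph{not} certify that a global rational plane curve of degree $24$ realizing it exists; the paper is explicit that the $\delta$-invariant and semigroup checks are only necessary conditions, and the sample parametrizations in the tables are descriptions of the candidate cusps, not existence proofs. The paper settles existence by applying Lemma \ref{lem:existenceofmostcurves} twice: the multiplicity sequence $(16,8_4,4_3,2_3)$ has the form $(kn,n_{2k},\overline{m}')$ with $n=8$, $k=2$, so the curve exists if and only if a degree-$8$ unicuspidal curve with multiplicity sequence $(4_3,2_3)$ exists, which in turn reduces to the degree-$4$ quartic with multiplicity sequence $(2_3)$, a known curve. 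Your closing sentence about an iterated Cremona construction starting from a two-Newton-pair curve is the right idea --- the curve in Table \ref{t:fourpairs} is in fact the AMS curve attached to the ordered factorization $24=3\cdot 2\cdot 2\cdot 2$ --- but as written it is an assertion rather than an argument, and it is precisely the step that the paper's Lemma \ref{lem:existenceofmostcurves} makes rigorous.
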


Combining these results, we have: 

\begin{corollary}(= Theorem \ref{main1})
    Any unicuspidal rational curve of degree $d$ for $1 \le d \le 30$ appears in Theorems \ref{thm:onepair}, \ref{thm:twopairs}, \ref{thm:threepairs}, or \ref{thm:fourpairs}.  The exhaustive list of all possible unicuspidal rational curves of degree $d \le 30$ appears in Table \ref{t:bigtable}.
\end{corollary}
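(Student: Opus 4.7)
The plan is to assemble the classification directly from the four preceding theorems. Every cuspidal singularity has a well-defined finite number $k \ge 1$ of Newton pairs (Definition \ref{defn:newtonpairs}), so I would partition the set of rational unicuspidal plane curves of degree $d \le 30$ by the value of $k$ and handle each case separately: $k=1$ via Theorem \ref{thm:onepair}, $k=2$ via Theorem \ref{thm:twopairs}, $k=3$ via Theorem \ref{thm:threepairs}, and $k \ge 4$ via Theorem \ref{thm:fourpairs}. Since the number of Newton pairs is a topological invariant of the cusp, these four cases are disjoint, so no curve in the combined list appears twice.

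The substantive work is in the specialization. For $k=1$, I would enumerate the members of each of the six families in Theorem \ref{thm:onepair} with $d \in [1,30]$: family (1) contributes one curve for each $d$; family (2) contributes one curve for each even $d$; the Fibonacci families (3) and (4) contribute only small odd $j$ (specifically $j = 5, 7, 9$ yield Fibonacci numbers up to $34$, so the values of $d = \phi_j$ or $d = \phi_{j-1}^2 + 1$ falling below $30$ are easily listed); and families (5), (6) are single entries at $d = 8, 16$. For $k = 2$, I would analogously specialize the eight families of Theorem \ref{thm:twopairs} by ranging over the integer parameters $k, l, n, m$ subject to $d \le 30$; this is the most parameter-heavy step. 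For $k = 3$ and $k \ge 4$, the tables in Theorems \ref{thm:threepairs} and \ref{thm:fourpairs} already enumerate all cases.

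Having generated the four sublists, I would merge them into Table \ref{t:bigtable} and cross-check each entry against the tabular data (degree, Newton pairs, sample parametrization, multiplicity sequence). To confirm the overall classification is exhaustive, I would verify that no value of $d \le 30$ admits a cusp with $k \ge 5$ pairs: this follows either from Theorem \ref{thm:fourpairs} (which is stated for all $k \ge 4$) or independently from the genus--delta identity $\delta_p = (d-1)(d-2)/2 \le 406$ combined with the lower bound on $\delta_p$ that each additional Newton pair forces via formula \eqref{delta} (each new pair has $p_j, q_j \ge 2$, so $P_j \ge 2^{k-j+1}$, and the cumulative contribution grows geometrically in $k$).

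The main obstacle is purely organizational: enumerating the specializations of the infinite families in Theorems \ref{thm:onepair} and \ref{thm:twopairs} without missing or duplicating any low-degree curve, and then reconciling the resulting list with Table \ref{t:bigtable}. The one subtlety worth flagging is that some families in Theorem \ref{thm:twopairs} can specialize to curves in Theorem \ref{thm:onepair} if the parameters are pushed to their boundary (e.g., degenerating a two-pair cusp to a one-pair cusp), so I would double-check that the stated parameter constraints in Theorem \ref{thm:twopairs} ($m \ge 2$, $n \ge 3$, etc.) correctly exclude such degenerations.
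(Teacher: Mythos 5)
Your proposal is correct and follows essentially the same route as the paper: the corollary is obtained by partitioning curves according to the number $k$ of Newton pairs of the cusp and invoking Theorems \ref{thm:onepair}--\ref{thm:fourpairs}, with the case $k\ge 5$ excluded by the genus bound $(d-1)(d-2)\ge (2^k-1)2^k$ (which the paper cites from \cite[Remark 5.5]{DS22} inside the proof of Theorems \ref{thm:threepairs} and \ref{thm:fourpairs}), and the table assembled by specializing the infinite families to $d\le 30$. The only nitpick is your aside that ``each new pair has $p_j,q_j\ge 2$'': in fact $q_j\ge 1$ is all one can say for $j\ge 2$ (e.g.\ the Orevkov curves have second Newton pair $(3,1)$), but the bound still follows from $p_j\ge 2$ and $Q_1>P_1\ge 2^k$ alone, so this does not affect the argument.
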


The key tool to prove Theorems \ref{thm:threepairs} and \ref{thm:fourpairs} comes from low dimensional topology and the study of the distribution of the semigroup associated to the cusp as proven in \cite{BorodzikLivingston} using Equation \ref{countingfunction}.  This allows one to produce a list of candidates for unicuspidal rational curves.  From the list of potential candidates, we use blow ups and blow downs to exhibit each curve as a birational modification of a known unicuspidal rational curve of lower degree, thus proving its existence.  

\begin{proof}{(Of Theorems \ref{thm:threepairs} and \ref{thm:fourpairs}.)}
    First, note that that a bound on the number of Newton pairs of the cusp in terms of the degree of the curve is derived in \cite[Remark 5.5]{DS22}.  This bound is far from sharp, but for a curve of degree $d$ with cusp parameterized by $k$ Newton pairs, we have 
    \[ (d-1)(d-2) \ge (2^k-1)2^k. \]

    If $k = 5$, this implies that $d >30$ so beyond the scope of Theorems \ref{thm:threepairs} and \ref{thm:fourpairs}.  Thus, we only need to consider curves with cusps parameterized by three or four Newton pairs. 
    
    We use a computer program to generate a list of possible cusp candidates defined by three and four Newton pairs.  The program is included in Appendix \ref{appendix}.  We describe the program for three pairs (as the program for four is similar).

For three pairs, given a degree $d$, the program first generates a finite list of possible characteristic terms of the parameterization $a, b_1, b_2, b_3$ and uses them to generate the related Puiseux pairs $(P_1, Q_1), (P_2, Q_2), (P_3, Q_3)$ and Newton pairs $(p_1, q_1), (p_2, q_2), (p_3, q_3)$ such that:
\begin{align*}
    P_1 &= a = p_1p_2p_3\\
    P_2 &= \gcd(P_1, b_1) = p_2p_3\\
    P_3 &= \gcd(P_2, b_2) = p_3\\
    Q_1 &= b_1 = q_1p_2p_3\\
    Q_2 &= b_2 - b_1 = q_2p_3\\
    Q_3 &= b_3 - b_2 = q_3
\end{align*}
where the maximum of $a, b_i$ is bounded by the genus of the curve.

The program then checks a series of properties:
\begin{enumerate}
    \item Validity for each Newton pair: $\gcd(p_i, q_i) = 1$, and that the sequence $\{(P_i, Q_i)\}$ satisfies the conditions in Definition \ref{defn:newtonpairs}.
    
    \item The $\delta$-invariant is equal to the genus of a smooth degree $d$ curve (so the normalization is rational): $\frac{(d-1)(d-2)}{2} = \frac{1}{2}\left( (P_1-1)(Q_1-1) + (P_2-1)(Q_2) + (P_3-1)(Q_3) \right)$.
    
    \item The semigroup counting property \ref{countingfunction} is satisfied: the generators for relevant semigroup $W = \langle 0, w_1, w_2, w_3, w_4 \rangle$ are computed by
    \begin{align*}
        w_1 &= P_1\\
        w_2 &= Q_1\\
        w_3 &= p_1 w_2 + Q_2\\
        w_4 &= p_2 w_3 + Q_3
    \end{align*}
    Since there is only one cusp, the semigroup counting property says for any $l$ up to $d - 2$, the size of $W \cap [0, ld)$ should be equal to $\frac{(l+1)(l+2)}{2}$.
\end{enumerate}

After these checks, we are left with a list of possible curves of degree $d$ defined by three Newton pairs.  The program in fact yields the list in Theorem \ref{thm:threepairs} and Theorem \ref{thm:fourpairs}.  However, the checks and semigroup counting property \ref{countingfunction} do not guarantee the existence of such a curve.  So, to complete the proof, we must verify the existence of each cusp.  We do this by explicit construction with the following two lemmas.

\begin{lemma}\label{lem:existenceofmostcurves}
    Let $k$ and $n$ be positive integers, $n \ge 2$.  Suppose $C$ is a unicuspidal plane curve of degree $d = (k+1)n$ with multiplicity sequence of the cusp $\overline{m} = (kn, n_{2k}, \overline{m}')$, where $\overline{m'}$ indicates the rest of the sequence.  Then, $C$ can be transformed by blow ups and blow downs to a unicuspidal curve $C'$ of degree $n$ with multiplicity sequence $\overline{m}'$.  In particular, as these monoidal transformations are invertible, $C$ exists if and only if $C'$ exists.
\end{lemma}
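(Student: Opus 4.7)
The plan is to realize the transformation $C \dashrightarrow C'$ by an explicit birational self-map of $\bP^2$, namely the de Jonqui\`eres transformation $\sigma$ of degree $k+1$ with distinguished base point at the cusp $p$ (of multiplicity $k$) and $2k$ simple base points at the first $2k$ infinitely near points $p_1, \ldots, p_{2k}$ on the minimal embedded resolution of the cusp. Such a $\sigma$ exists because the linear system of plane curves of degree $k+1$ with the prescribed multiplicities at $\{p, p_1, \ldots, p_{2k}\}$ is two-dimensional and homaloidal; classically, $\sigma$ is resolved by blowing up $p, p_1, \ldots, p_{2k}$ (a total of $2k+1$ blowups) to produce a smooth surface $S$, and then contracting an appropriate sequence of $2k+1$ distinguished $(-1)$-curves in $S$ to recover another copy of $\bP^2$. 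This is exactly the sequence of blowups and blowdowns demanded by the lemma.

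The first geometric input is that the tangent line $L$ to $C$ at $p$ meets $C$ only at $p$: since $\mathrm{mult}_p(C) = kn$ and $\mathrm{mult}_{p_1}(\widetilde{C}) = n$, B\'{e}zout forces $i_p(L, C) = kn + n = d$. Applying the Cremona degree formula gives
\[
    \deg C' \;=\; (k+1) d - k \cdot (kn) - 2k \cdot n \;=\; (k+1)^2 n - k^2 n - 2kn \;=\; n.
\]
For the multiplicity sequence: in $S$, the strict transform $\widetilde{C}$ has self-intersection $d^2 - (kn)^2 - 2k n^2 = n^2$, and its unique remaining singular point lies at the next infinitely near point $p_{2k+1}$, where the local structure is governed by $\overline{m}'$. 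Among the $(-1)$-curves contracted by $\sigma$, only the exceptional divisor $E_{2k+1}$ meets $\widetilde{C}$ nontrivially (with $\widetilde{C} \cdot E_{2k+1}$ equal to the leading entry of $\overline{m}'$); a direct intersection-number check shows $\widetilde{C} \cdot F = 0$ for every other contracted curve $F$. Consequently the image $C'$ in the new $\bP^2$ acquires no extra multiplicity at the images of those other contracted curves, so it has a unique cusp — the image of $p_{2k+1}$ — whose local analytic type is precisely $\overline{m}'$.

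Since each step of $\sigma$ is a blowup or blowdown and the inverse of a de Jonqui\`eres transformation is again a de Jonqui\`eres transformation of the same degree, the entire procedure is reversible: starting from $C'$ and applying the inverse, one recovers $C$. This establishes the claimed ``if and only if.'' The main subtlety is the verification that $\widetilde{C} \cdot F = 0$ for every contracted $(-1)$-curve $F$ other than $E_{2k+1}$, which is what prevents $C'$ from acquiring spurious cusps at the new base points of $\sigma^{-1}$; this intersection-vanishing follows inductively from $i_p(L, C) = d$ together with the constancy of the multiplicities $n$ along the chain $p_1, \ldots, p_{2k}$, propagated through the successive blowups.
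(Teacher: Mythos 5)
Your strategy is in essence identical to the paper's: the explicit sequence of blow-ups and blow-downs in the paper (factored through the Hirzebruch surface $\bF_{k+1}$) composes to exactly the degree-$(k+1)$ de Jonqui\`eres map you describe, based at $p$ with multiplicity $k$ and at the first $2k$ infinitely near points of the cusp, and your degree computation $\deg C' = (k+1)^2n - k^2n - 2kn = n$ is correct. However, there is a genuine error at the decisive step, namely the identification of which curves $\sigma$ contracts. The exceptional divisor $E_{2k+1}$ over the last base point $p_{2k}$ is \emph{not} contracted by $\sigma$: writing the pullback of the target hyperplane class as $H' = (k+1)H - k\mathcal{E}_1 - \mathcal{E}_2 - \cdots - \mathcal{E}_{2k+1}$, one has $E_{2k+1}\cdot H' = 1$, so $E_{2k+1}$ maps to a line in the target plane. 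Moreover $\widetilde{C}\cdot E_{2k+1} = m_{p_{2k}}(C) = n$, not the leading entry of $\overline{m}'$ (e.g.\ for the degree-$12$ curve with $\overline{m}=(8,4_4,2_3)$ one gets $4$, not $2$); this intersection number is precisely the geometric reason that $\deg C' = n$. The curves that \emph{are} contracted are the strict transform of the tangent line $T$ together with the other $2k$ exceptional divisors, and the proximity equalities $kn = k\cdot n$ at $p$ and $n = n$ along the chain show that every one of these is disjoint from $\widetilde{C}$. Your version of the claim is not merely an indexing slip: if a contracted $(-1)$-curve really met $\widetilde{C}$ at its singular point with positive multiplicity, the image singularity would have multiplicity sequence beginning with that intersection number rather than $\overline{m}'$, contradicting your own conclusion. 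As written, the argument for the local type of the cusp of $C'$ therefore does not go through.

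Two further points. First, the existence of a de Jonqui\`eres transformation with this particular infinitely-near base cluster (with $p_2,\dots,p_k$ satellite points along $E_1$) is asserted rather than proved; the paper's factorization through $\bF_{k+1}$ is exactly the device that simultaneously establishes this existence and exhibits the contracted configuration, so deferring it removes the actual content of the lemma. Second, your reversibility paragraph matches the paper's level of detail, so I will not press it, but note that recovering $C$ from $C'$ requires a line through the cusp of $C'$ meeting $C'$ only there to order $n$, which is part of what the explicit construction furnishes.
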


\begin{proof}
    Suppose $C$ is a plane curve of degree $d = (k+1)n$ with a cusp $p \in C$ with mulitplicity sequence $\overline{m} = (kn, n_{2k}, \overline{m}')$.  For $T$ the tangent line to $C$ at $p$, we have $m_1+m_2 \le (T\cdot C)_p \le d$ by \cite[(2.3)]{MoeThesis}, so $(k+1)n \le (T\cdot C)_p \le (k+1)n$, so $(T\cdot C)_p = T\cdot C = (k+1)n = d$.  

    Repeatedly blow up the intersection point of $C$ and $T$, and let $E_i$ be the exceptional divisor of the $i$th blow up.  After $k+1$ blow ups, denote by $\widehat{C}$ and $\widehat{T}$ the strict transform of $C$ and $T$.  The dual graph of the configuration of curves is given below, where we adopt the convention that we use \textit{straight edges} to indicate transverse intersections (as one would in a typical dual graph) but \textit{curved edges} to indicate higher multiplicity intersections: 

    \begin{center}
        \begin{tikzpicture}
            \filldraw[black] (-1,0.5) circle (1pt) node[above]{$\widehat{C}$};
            \draw[gray] (-1,0.5)  to [out=-90,in=180] (0,0);
            \filldraw[black] (-1,-0.5) circle (1pt) node[below]{ $\begin{array}{c} E_1 \\ -(k+1) \end{array}$ };
            \draw[gray] (-1,-0.5) -- (0,0);
            \filldraw[black] (0,0) circle (1pt) node[above]{$E_{k+1}$};
            \draw[gray] (0,0) -- (1,0);
            \filldraw[black] (1,0) circle (1pt) node[above]{$E_{k}$} node[below]{$-2$};
            \draw[gray] (1,0) -- (2,0);
            \filldraw[black] (2,0) circle (1pt);
            \draw[gray, dotted] (2,0) -- (3,0);
            \filldraw[black] (3,0) circle (1pt);
            \draw[gray] (3,0) -- (4,0);
            \filldraw[black] (4,0) circle (1pt) node[above]{$E_{2}$} node[below]{$-2$};
            \draw[gray] (4,0) -- (5,0);
            \filldraw[black] (5,0) circle (1pt) node[above]{$\widehat{T}$} node[below]{$-1$};
        \end{tikzpicture}
    \end{center}

    Here, $\widehat{C}$ has multiplicity sequence $(n_k, \overline{m}')$ and $\widehat{C} \cdot E_{k+1} = n$.  Here, the intersections of $E_i$ and $T$ are transverse, but the intersection of $\widehat{C}$ and $E_{k+1}$ is the singular point of $\widehat{C}$.  Contracting the chain $E_k \cup \dots \cup E_2 \cup \widehat{T}$ yields the Hirzebruch surface $\bF_{k+1}$ with negative section $E_1$ and fiber $E_{k+1}$.  The curve $\widehat{C}$ does not intersect the negative section.  

    On the Hirzebruch surface, blow up the cusp of $\widehat{C}$ (the intersection of $\widehat{C}$ and $E_{k+1}$) $k$ times. Denote the exceptional divisor of the $i$th blow up by $F_i$ and the strict transform of $\widehat{C}$, $E_{k+1}$, and $E_1$ by $C'$, $E_{k+1}'$, and $E_1'$, respectively.  From the multiplicity sequence of $\widehat{C}$, the configuration of curves has dual graph: 

    \begin{center}
        \begin{tikzpicture}
            \filldraw[black] (-1,-0.5) circle (1pt) node[below]{ $\begin{array}{c} E_1' \\ -(k+1) \end{array}$ };
            \draw[gray] (-1,-0.5) -- (0,0);
            \filldraw[black] (0,0) circle (1pt) node[above]{$E_{k+1}'$} node[below]{$-1$};
            \draw[gray] (0,0) -- (1,0);
            \filldraw[black] (1,0) circle (1pt) node[above]{$F_{1}$} node[below]{$-2$};
            \draw[gray] (1,0) -- (2,0);
            \filldraw[black] (2,0) circle (1pt);
            \draw[gray, dotted] (2,0) -- (3,0);
            \filldraw[black] (3,0) circle (1pt);
            \draw[gray] (3,0) -- (4,0);
            \filldraw[black] (4,0) circle (1pt) node[above]{$F_{k}$} node[below]{$-1$};
            \draw[gray] (4,0) to [out=30,in=210] (5,0);
            \filldraw[black] (5,0) circle (1pt) node[above]{$C'$};
        \end{tikzpicture}
    \end{center}

    Here, $C'\cdot F_{k} = n$ (meeting at the cusp of $C'$) and $C'$ has multiplicity sequence $\overline{m}'$.  We contract $E_1' \cup E_{k+1}' \cup F_1 \cup \dots \cup F_{k-1}$ to a smooth point in $\bP^2$.  The image of $F_k$ is a line in $\bP^2$ and as $C'\cdot F_k = n$, $C'$ is a degree $n$ unicuspidal plane curve with multiplicity sequence $\overline{m}'$. 
\end{proof}

Because $C'$ has lower degree than $C$, this provides an inductive approach to proving the existence of $C$.  From this lemma, we  can verify the existence of all but two curves in Table \ref{t:threepairs}.  In Table \ref{t:induct}, we use Lemma \ref{lem:existenceofmostcurves} to reduce the existence of each curve $C$ to a lower degree curve $C'$, and in some cases reduce $C'$ to a lower degree curve $C''$.  

\begin{center}
\begin{table}[h]
\caption{Proving existence of curves by reducing their degree.}
\label{t:induct}
\begin{tabular}{|c|p{.15\textwidth}|p{.3\textwidth}|p{.3\textwidth}|}
    \hline degree & multiplicity sequence of $C$ & existence of $C$ is equivalent to existence of $C'$ with $d'$, $\overline{m}'$ & existence of $C'$ is equivalent to existence of $C''$ with $d''$, $\overline{m}''$ \\
    \hline 12 & $(8, 4_4, 2_3)$ & $d' = 4, \overline{m}' = (2_3)$ &  \\
    \hline 16 & $(8_3, 4_3, 2_3)$ & $d' = 8, \overline{m}' = (4_3, 2_3)$ & $d'' = 4, \overline{m}'' = (2_3)$ \\
    \hline  &  $(12, 4_6, 2_3)$  & $d' = 4, \overline{m}' = (2_3) $ & \\
    \hline 18 & $(12, 6_4, 3_3, 2)$ & $d' = 6, \overline{m}' = (3_3, 2)$ & \\
    \hline  &  $(12, 6_4, 4, 2_4)$ & $d' = 6, \overline{m}' = (4, 2_4)$ &  \\
    \hline 20 & $(16, 4_8, 2_3)$ & $d' = 4, \overline{m}' = (2_3)$ & \\
    \hline 24 & $(12_3, 6_3, 3_3, 2)$ & $d' = 12, \overline{m}' = (6_3, 3_3, 2)$ & $d'' = 6, \overline{m}'' = (3_3, 2)$ \\
    \hline 24 & $(12_3, 6_3, 4, 2_4)$ & $d' = 12, \overline{m}' = (6_3, 4, 2_4)$ & $d'' = 6, \overline{m}'' = (4, 2_4)$   \\
    \hline 24 & $(12_3, 8, 4_4, 2_3)$ & $d' = 12, \overline{m}' = (8, 4_4, 2_3)$ & $d'' = 4, \overline{m}'' = (2_3)$  \\
    \hline 24 & $(16, 8_4, 4_3, 3)$ & $d' = 8, \overline{m}' = (4_3, 3)$ & $d'' = 4, \overline{m}'' = (3)$  \\
    \hline 24 & $(16, 8_4, 6, 2_6)$ & $d' = 8, \overline{m}' = (6, 2_6)$ & $d'' = 2, \overline{m}'' = (1)$ ($C''$ is smooth)  \\
    \hline 24 & $(18, 6_6, 3_3, 2)$ & $d' = 6, \overline{m}' = (3_3, 2)$ & \\
    \hline 24 & $(18, 6_6, 4, 2_4)$ & $d' = 6, \overline{m}' = (4, 2_4)$ & \\
    \hline 24 & $(20, 4_{10}, 2_3)$ & $d' = 4, \overline{m}' = (2_3)$ &  \\
    \hline 27 & $(18, 9_4, 6, 3_4, 2)$ & $d' = 9, \overline{m}' = (6, 3_4, 2)$ & $d'' = 3, \overline{m}'' = (2)$ \\
    \hline 28 & $(24, 4_{12}, 2_3)$  & $d' = 4, \overline{m}' = (2_3)$ & \\
    \hline 30 & $(20, 10_4, 5_3, 4)$ & $d' = 10, \overline{m}' = (5_3, 4)$ & $d'' = 5, \overline{m}'' = (4)$ \\
    \hline 30 &  $(20, 10_4, 8, 2_8)$ & $d' = 10, \overline{m}' = (8, 2_8)$ & $d'' = 2, \overline{m}'' = (1)$ ($C''$ is smooth)   \\
    \hline 30 & $(24, 6_8, 3_3, 2)$ & $d' = 6, \overline{m}' = (3_3, 2)$ & \\
    \hline 30 & $(24, 6_8, 4, 2_4)$ & $d' = 6, \overline{m}' = (4, 2_4)$ &  \\
    \hline
\end{tabular}
\end{table}  
\end{center}

For the only curve in Table \ref{t:fourpairs}, we can apply Lemma \ref{lem:existenceofmostcurves} twice to show that the existence of $C$ is equivalent to the existence of a degree $8$ curve $C'$ with multiplicity sequence of the cusp $\overline{m}' =(4_3, 2_3)$, which is equivalent to the existence of a degree $4$ curve $C''$ with $\overline{m}'' = (2_3)$. 

In summary, we can reduce the existence of every curve to either a smooth conic or a degree 4, 5, or 6 unicuspidal rational curve, and then appeal to \cite[Table 1]{DS22} to verify their existence.  Indeed, there exist curves of degree 4 with multiplicity sequences $(2_3)$ and $(3)$; there exists a curve of degree 5 with multiplicity sequence $(4)$; and there exist curves of degree 6 with multiplicity sequences $(3_3, 2)$ and $(4, 2_4)$ which are explicitly given in \cite[Table 1]{DS22}.

The only curves in Table \ref{t:threepairs} in Theorem \ref{thm:threepairs} that are not present in Table \ref{t:induct} are the degree 19 curve and the first degree 28 curve.  We use the following lemma to verify the existence of these curves.  In the notation below, if $a = 3$ and $s = 2$ (respectively, $3$) this recovers the degree 19 curve (respectively, the first degree 28 curve).

\begin{lemma}\label{lem:type1curves}
    Let $C$ be the degree $a+1$ plane curve $zx^a = y^{a+1}$, which has a cuspidal singularity as in Theorem \ref{thm:onepair}.1 at $[0:0:1]$.  By a prescribed sequence of blow ups and blow downs, $C$ can be transformed into a rational unicuspidal plane curve $C'$ of degree $d = a^2s+1$ with multiplicity sequence $((a-1)as, as_{2a-1}, a_{2s})$. In particular, because $C$ exists, $C'$ also exists. 
\end{lemma}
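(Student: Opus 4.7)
The plan is to construct $C'$ as the image of $C$ under a prescribed sequence of blow ups and blow downs of $\bP^2$. The key structural feature of $C$ to exploit is the inflection of maximal order at $r = [1:0:0]$: the tangent line $L = \{z = 0\}$ meets $C$ only at $r$, with intersection multiplicity $a + 1$ (substituting $z = 0$ into $zx^a - y^{a+1}$ yields $y^{a+1} = 0$). This maximal tangency at a smooth point plays a role symmetric to that of the cusp at $p = [0:0:1]$ with tangent $T = \{x=0\}$, reflecting an involution of $zx^a = y^{a+1}$ exchanging the two coordinate points.

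First I would blow up $r$ and iteratively blow up the intersection of the proper transform $\widetilde{L}$ with the most recent exceptional divisor. After $a+1$ such blow ups the contact between $\widetilde L$ and $\widetilde C$ is fully resolved: the exceptional configuration is a chain $\widetilde{L}-E_{a+1}-\widetilde{E}_a-\cdots-\widetilde{E}_1$ in which $\widetilde L$ has self-intersection $-a$, $E_{a+1}$ is a $(-1)$-curve, and the interior $\widetilde E_i$ are $(-2)$-curves, with $\widetilde C$ smooth and meeting $E_{a+1}$ transversely at a single point $q$ separated from $\widetilde L$. Next, to install the additional branch of the dual resolution graph corresponding to the second and third Newton pairs of the target cusp, I would blow up $q$ and its $s-1$ infinitely near points along $\widetilde C$, producing a secondary chain of $(-2)$-curves attached to the main chain at $\widetilde E_{a+1}$.

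Finally I would contract the resulting $(-1)$-chain in an appropriate order, starting from the terminal $(-1)$-curve of the secondary chain and cascading inward along both chains; each contraction raises the self-intersection of adjacent curves by one and, after all $a+s$ blow ups are undone, returns the Picard rank to $1$. The ambient surface becomes $\bP^2$ and the strict transform of $C$ descends to a plane curve $C'$ whose image of the contracted configuration is a new cuspidal point. A degree count from $\widetilde C^2$ on the intermediate surface confirms that $\deg C' = a^2 s+1$.

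The main obstacle will be verifying that the cusp of $C'$ has precisely the multiplicity sequence $((a-1)as, as_{2a-1}, a_{2s})$ and that the terminal surface is $\bP^2$ rather than a nontrivial Hirzebruch surface; both require careful tracking of self-intersections and multiplicities along the chain contractions. A cleaner verification proceeds by positing that the Newton pairs of the new cusp are $(a-1, a),(s, as+1),(a, as+1)$, matching the entries for $d=19$ and $d=28$ in Theorem \ref{thm:threepairs}, and checking via formula \eqref{delta} that
\[
\delta \;=\; \tfrac{1}{2}\big((a(a-1)s-1)(a^2 s - 1) + (as-1)(a^2 s + a) + (a-1)(as+1)\big) \;=\; \tfrac{1}{2}\, a^2 s(a^2 s - 1),
\]
which equals $\binom{d-1}{2}$ for $d = a^2 s + 1$. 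This is exactly the $\delta$-invariant of a rational unicuspidal plane curve of degree $d$ (Remark \ref{rmk:rationaldelta}), so the construction does produce such a curve, and since $C$ exists, so does $C'$.
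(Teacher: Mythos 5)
There is a genuine gap: your construction never touches the cusp of $C$ at $p=[0:0:1]$, so it cannot produce a \emph{unicuspidal} curve with the stated multiplicity sequence. All of your blow-ups are centred at the inflection point $r=[1:0:0]$ and at points infinitely near to it, and the configuration you contract (the strict transform of $\{z=0\}$ together with the exceptional curves over $r$) lies entirely away from $p$, since the line $z=0$ does not pass through $[0:0:1]$. The original cusp, with multiplicity sequence $(a)$, therefore survives unchanged on the image curve, which is consequently not unicuspidal. Moreover, the new point you create is too mild: after your $a+1$ blow-ups the strict transform of $C$ meets the exceptional configuration transversally at a single point of the terminal curve, and contracting a chain that a curve meets transversally at one end keeps the image smooth at each stage, so no cusp of multiplicity $(a-1)as$ can appear there. (The count ``$a+s$ blow ups'' versus the $a+1+s$ you actually perform is a further sign the bookkeeping has not been carried through.) The paper's proof avoids both problems by working with the line $L=\{y=0\}$ \emph{joining the cusp $p$ to the inflection point} $q=[1:0:0]$, which meets $C$ to order $a$ at the cusp; every subsequent contraction is arranged to pass through the cusp, so the singularity is built up at a single point, from $(a)$ to $(a_s)$ to $(a_{2s})$ to $(as_a,a_{2s})$ and finally to $((a-1)as, as_{2a-1}, a_{2s})$, via an interleaved sequence of modifications through Hirzebruch surfaces and $\bP^1\times\bP^1$.

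Your fallback ``cleaner verification'' does not repair this. Checking that the Newton pairs $(a-1,a),(s,as+1),(a,as+1)$ yield $\delta=\binom{d-1}{2}$ (which you do compute correctly) is only a necessary condition: it is precisely the kind of numerical consistency that the paper's computer search already imposes, and the paper states explicitly that these checks ``do not guarantee the existence of such a curve.'' The entire purpose of Lemma \ref{lem:type1curves} is to supply the missing existence proof by an explicit birational construction, so the $\delta$-invariant computation cannot substitute for it.
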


\begin{proof}
    $C$ has multiplicity sequence $(a)$ at the point $p = [0:0:1]$.  Let $T$ be the inflectional line at the point $q = [1:0:0]$ given by $z = 0$.  This meets the curve to order $a+1$ at $q$.  Let $L$ be the line  $y = 0$ connecting $p$ and $q$, so $L$ meets $C$ to order $a$ at $p$ and transversely at $q$.  

    Blow up $q$ and denote the exceptional divisor by $\Sigma$, and blow up the intersection of $\Sigma$ and the strict transform of $L$ $s-1$ times, denoting the exceptional divisor of the $i$th blow up by $E_i$ to obtain the dual graph:

    \begin{center}
        \begin{tikzpicture}
            \filldraw[black] (-1,0.5) circle (1pt) node[above]{$\widehat{T}$} node[below]{$0$};
            \draw[gray] (-1,0.5) -- (0,0);
            \filldraw[black] (0,0) circle (1pt) node[above]{$\widehat{\Sigma}$} node[below]{$-s$};
            \draw[gray] (0,0) -- (1,0);
            \filldraw[black] (1,0) circle (1pt) node[above]{${E}_{s-1}$} node[below]{$-1$};
            \draw[gray] (1,0) -- (2,0);
            \filldraw[black] (2,0) circle (1pt) node[above]{$E_{s-2}$} node[below]{$-2$};
            \draw[gray, dotted] (2,0) -- (3,0);
            \filldraw[black] (3,0) circle (1pt);
            \draw[gray] (3,0) -- (4,0);
            \filldraw[black] (4,0) circle (1pt) node[above]{$E_{1}$} node[below]{$-2$};
            \draw[gray] (4,0) -- (5,0);
            \filldraw[black] (5,0) circle (1pt) node[above]{$\widehat{L}$} node[below]{$-1$};
            \filldraw[black] (-.7,-1.2) circle (1pt) node[below]{$\widehat{C}$};
            \draw[gray] (-.7,-1.2) to [out=150,in=-150] (-1,0.5);
            \draw[gray] (-.7,-1.2) to [out=0,in=220] (5,0);
            \draw[gray] (-.7,-1.2) -- (0,0);
        \end{tikzpicture}
    \end{center}

    Here, $\widehat{C}$, $\widehat{T}$, and $\widehat{L}$ denote the strict transforms, and the intersection of every pair of curves not including $\widehat{C}$ is transverse.  The intersection of $\widehat{C}$ and $\widehat{T}$ occurs at a smooth point of $\widehat{C}$ and $\widehat{T}$ and they meet to order $a$.  This is also the (transverse) intersection point of $\widehat{C}$ and $\widehat{\Sigma}$.  The intersection of $\widehat{C}$ and $\widehat{L}$ is the cusp with multiplicity sequence $(a)$.  Now, contract the chain $\widehat{L}\cup E_1 \cup \dots \cup E_{s-2}$ to yield the Hirzebruch surface $\bF_s$.  Let $\overline{C}$ be the image of $\widehat{C}$ and $\overline{E}_{s-1}$ the image of $E_{s-1}$.  The curve $\overline{C}$ has a cusp with multiplicity sequence $(a_s)$ where it meets $\overline{E}_{s-1}$.  

    Now, blow up a point on $\overline{E}_{s-1}$ away from the cusp of $\overline{C}$ and away from the intersection with the negative section of $\bF_s$, denoting the exceptional divisor by $F_1$.  Blow up a point on $F_1$ away from the intersection with $\overline{E}_{s-1}$ and denote the exceptional divisor by $F_2$.  Repeat until we have a collection of $s$ exceptional divisors with dual graph:

    \begin{center}
        \begin{tikzpicture}
            \filldraw[black] (-1,0.5) circle (1pt) node[above]{$\widehat{T}$} node[below]{$0$};
            \draw[gray] (-1,0.5) -- (0,0);
            \filldraw[black] (0,0) circle (1pt) node[above]{$\widehat{\Sigma}$} node[below]{$-s$};
            \draw[gray] (0,0) -- (1,0);
            \filldraw[black] (1,0) circle (1pt) node[above]{$\overline{E}_{s-1}$} node[below]{$-1$};
            \draw[gray] (1,0) -- (2,0);
            \filldraw[black] (2,0) circle (1pt) node[above]{$F_1$} node[below]{$-2$};
            \draw[gray, dotted] (2,0) -- (3,0);
            \filldraw[black] (3,0) circle (1pt);
            \draw[gray] (3,0) -- (4,0);
            \filldraw[black] (4,0) circle (1pt) node[above]{$F_{s-1}$} node[below]{$-2$};
            \draw[gray] (4,0) -- (5,0);
            \filldraw[black] (5,0) circle (1pt) node[above]{$F_s$} node[below]{$-1$};
            \filldraw[black] (-.7,-1.2) circle (1pt) node[above]{$\overline{C}$};
            \draw[gray] (-.7,-1.2) to [out=150,in=-150] (-1,0.5);
            \draw[gray] (-.7,-1.2) to [out=0,in=270] (1,0);
            \draw[gray] (-.7,-1.2) -- (0,0);
        \end{tikzpicture}
    \end{center}

    and now contract $\overline{E}_{s-1} \cup F_1 \cup \dots \cup F_s$ to $\bP^1 \times \bP^1$ where the images of $\widehat{T}$ and $\widehat{\Sigma}$ are the vertical and horizontal sections, respectively, and the image of $\overline{C}$ meets $\widehat{T}$ to order $a$ at a smooth point of both curves, and meets $\widehat{\Sigma}$ at two points: transversely at the intersection of $\widehat{T}$ and $\widehat{\Sigma}$, and has a cusp with multiplicity sequence $(a_{2s})$ at the other intersection point. 

    Next, blow up the point $t \in \widehat{T}$ that is intersection of $\widehat{T}$ and $\widehat{\Sigma}$ $a$ times to separate $\widehat{T}$ and $\overline{C}$, and contract the union of all but the last exceptional divisor of this blow up and the strict transform of $\widehat{\Sigma}$. This produces the Hirzebruch surface $\bF_a$, with negative section the image of $\widehat{T}$, and the image of the last exceptional divisor is a fiber $G_0$ of this surface.  The image of $C$, $\widetilde{C}$, is a unicuspidal curve with multiplicity sequence $(as_a, a_{2s})$ and meets the fiber $G_0$ in one smooth point $g_0$ at the cusp of $\widetilde{C}$.  

    Finally, blow up $g_0$ on $G_0$, denoting the exceptional divisor by $H_1$.  Let $h_1$ be the intersection of the strict transform of $\widetilde{C}$ and $H_1$. Now, blow up $h_1$ on $H_1$, with exceptional divisor $H_2$, and iterate until there exist $a-1$ exceptional divisors:

    \begin{center}
        \begin{tikzpicture}
            \filldraw[black] (0,0) circle (1pt) node[above]{$\overline{T}$} node[below]{$-a$};
            \draw[gray] (0,0) -- (1,0);
            \filldraw[black] (1,0) circle (1pt) node[above]{$\overline{G}_{0}$} node[below]{$-1$};
            \draw[gray] (1,0) -- (2,0);
            \filldraw[black] (2,0) circle (1pt) node[above]{$H_1$} node[below]{$-2$};
            \draw[gray, dotted] (2,0) -- (3,0);
            \filldraw[black] (3,0) circle (1pt);
            \draw[gray] (3,0) -- (4,0);
            \filldraw[black] (4,0) circle (1pt) node[above]{$H_{a-2}$} node[below]{$-2$};
            \draw[gray] (4,0) -- (5,0);
            \filldraw[black] (5,0) circle (1pt) node[above]{$H_{a-1}$} node[below]{$-1$};
            \filldraw[black] (2,-1.2) circle (1pt) node[above]{$\widetilde{C}$};
            \draw[gray] (2,-1.2) to [out=180,in=270] (1,0);
            \draw[gray] (2,-1.2) -- (5,0);
        \end{tikzpicture}
    \end{center}

    Contracting the union $\overline{T} \cup \overline{G}_0 \cup H_1 \cup \dots \cup H_{a-2}$ yields the surface $\bP^2$ with a unicuspidal curve $C'$ with multiplicity sequence $((a-1)as, as_{2a-1}, a_{2s})$, as claimed.  Furthermore, the image of $H_{a-1}$ is a line in $\bP^2$, and computing the degree of $C'$ via the intersection of $\widetilde{C}$ and the pullback of this line, we find that $C'$ has degree $a^2s+1$.
\end{proof}

By Lemma \ref{lem:existenceofmostcurves} and Lemma \ref{lem:type1curves}, we have verified the existence of every curve generated by the program in Appendix \ref{appendix}, so this concludes the proof of Theorems \ref{thm:threepairs} and \ref{thm:fourpairs}.  
\end{proof}

\section{Classification of cuspidal curves by logarithmic Kodaira dimension}\label{s:kodairadim}

While the previous section yielded a classification of curves based on the $\delta$-invariant of the singular point and its Newton pairs, we will now connect that perspective with the logarithmic Kodaira dimension of the complement of the curve. 

\begin{definition}
    If $C \subset \bP^2$, the \textbf{Kodaira dimension} of open surface $\bP^2 - C$ is defined as $\kappa( \bP^2 - C) := \kappa(X,D)$ where $\pi: X \to \bP^2$ is a minimal log resolution of $C$ and $D = \Supp \pi^{-1} C$.  
\end{definition}

We summarize the current knowledge of the classification of unicuspidal rational plane curves by Kodaira dimension.  Just as in the case of classification by Newton pairs, there is no complete classification in general.  However, it is complete if we assume $\kappa(\bP^2 - C) < 2$.  For a more detailed detailed survey, see \cite{FdBLVMHN1}.

\subsection{Kodaira dimension $-\infty$}

 If $\kappa(\bP^2 - C) = \infty$, then $C$ is either an \textit{AMS} curve or a curve classified by Kashiwara in \cite{Kashiwara}.  AMS curves, short for Abhyankar-Moh-Suzuki, are described up to projective equivalence in \cite{Tono2000} and refer to those unicuspidal curves $C$ with cusp $p$ such that the tangent line to $C$ at $p$ meets $C$ only at $p$.  All curves in Lemma \ref{lem:existenceofmostcurves} are AMS curves. 

\subsubsection{AMS curves}\label{amscurves} 

The AMS curves are one of the following two types.  Let $d = n_1 \dots n_r$ where $r \ge 1$, $n_i$ are positive integers, where $n_i \ge 2$ for each $i$. 

\begin{enumerate}
    \item If $n_1 > 2$, the AMS curves are degree $d$ rational unicuspidal plane curves with cusp parameterized by the $r$ Newton pairs
        \[ (n_1-1, n_1), (n_2, n_1n_2-1), (n_3, n_2n_3-1), \dots, (n_k , n_{k-1}n_k-1), \dots , (n_r, n_{r-1}n_r - 1) .\]
        
    If $r = 1$ and $d = n_1$, then the cusp just has one Newton pair
    \[ (d-1, d). \]  These are precisely the curves in Theorem \ref{thm:onepair}.1. 

    For $r = 2$, these are the curves given in Theorem \ref{thm:twopairs}.3.

    \item If $n_1 = 2$ (and necessarily $r \ge 2$), so $d = 2n_2 \dots n_r$, the AMS curves are degree $d$ rational unicuspidal plane curves with cusp parametrized by the $r -1$ Newton pairs 
\[ (n_2, 4n_2-1), (n_3, n_2n_3-1), \dots, (n_k , n_{k-1}n_k-1), \dots , (n_r, n_{r-1}n_r - 1) .\]
    
    If $r = 2$ and $d = 2n_2$, then the cusp just has one Newton pair $(n_2, 4n_2 -1) = (d/2, 2d-1)$.  These are precisely the curves in Theorem \ref{thm:onepair}.2.  

    For $r = 3$, these are the curves given in Theorem \ref{thm:twopairs}.4.
\end{enumerate}

\begin{remark}
    By direct computation, the multiplicity sequence of the cusp of the first type of AMS curve is $\overline{m} = ((n_1-1)n_2\dots n_r, (n_2\dots n_r)_{2(n_1-1)}, \overline{m}')$ and that of the cusp of the second type is $\overline{m} = ((n_2\dots n_r)_3, \overline{m}')$.  In particular, Lemma \ref{lem:existenceofmostcurves} applies for all AMS curves with $k = n_1 - 1$, so they can be explicitly constructed from lower degree curves via an inductive process.  
\end{remark}

Note that in the construction of AMS curves, the order of the factorization matters: for $d$ even, $d \ge 6$, the factorization $d = 2n$ yields the cusp with a single Newton pair $(d/2, 2d-1)$, but the factorization $d = n \cdot 2$ yields a cusp with two Newton pairs $(n-1,n),(2,d-1)$. In other words, each ordered factorization of $d$ gives rise to a topologically distinct unicupsidal plane curve.  We state this observation as a result.

 \begin{theorem}(= Theorem \ref{main2})
    For any degree $d$, there are at least as many non-isomorphic rational unicuspidal plane curves as there are ordered factorizations of $d$. 
 \end{theorem}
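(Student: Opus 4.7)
The plan is to read the statement off from the AMS classification in Section \ref{amscurves}. The existence of a rational unicuspidal plane curve of degree $d$ for each ordered factorization $d = n_1 n_2 \cdots n_r$ with $n_i \geq 2$ is already established there (Type 1 when $n_1 \geq 3$, Type 2 when $n_1 = 2$ and $r \geq 2$). What remains is to verify non-isomorphism. Since the sequence of Newton pairs at a cusp is an analytic, hence projective, invariant of the plane curve, it suffices to show that the map sending an ordered factorization of $d$ to the Newton pair sequence of its associated AMS curve is injective.

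First I would distinguish the two AMS types by the shape of the first Newton pair. A Type 1 curve has first pair $(n_1 - 1, n_1)$, which satisfies $q_1 = p_1 + 1$, while a Type 2 curve has first pair $(n_2, 4n_2 - 1)$, which satisfies $q_1 = 4 p_1 - 1$. A common solution would force $p_1 + 1 = 4 p_1 - 1$, i.e.\ $p_1 = 2/3$, which is impossible, so the type is determined by the first Newton pair alone. In particular, the degenerate cases $r = 1$ in Type 1 (giving pair $(d-1, d)$) and $r = 2$ in Type 2 (giving pair $(d/2, 2d-1)$) produce distinct single-pair data whenever $d > 2$.

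Once the type is identified, I would recover the ordered factorization from the remaining pairs: in Type 1, $p_i = n_i$ for $1 \le i \le r$, so the first coordinate of each Newton pair read in order gives $(n_1, \ldots, n_r)$; in Type 2, $p_1 = n_2$ (with $n_1 = 2$ fixed) and $p_i = n_{i+1}$ for $i \geq 2$, and $r$ is recovered as the number of Newton pairs plus one. Distinct ordered factorizations therefore produce distinct Newton pair sequences and consequently pairwise non-isomorphic plane curves, yielding the claimed lower bound.

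The argument is essentially bookkeeping, so the main obstacle is purely to confirm that Type 1 and Type 2 AMS curves cannot share Newton pair data and that subsequent factors can be read off unambiguously from the invariant. Both points are handled by the explicit formula comparison above, and no further geometric input beyond the AMS construction is required.
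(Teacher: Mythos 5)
Your argument is correct and matches the paper's: the theorem is stated there as an immediate observation following the AMS classification, namely that each ordered factorization $d = n_1\cdots n_r$ yields an AMS curve and that distinct ordered factorizations yield topologically distinct cusps, which is precisely the injectivity check you carry out via the Newton pair data. One small indexing slip: in Type 1 the first Newton pair is $(n_1-1,n_1)$, so $p_1 = n_1-1$ rather than $n_1$; this does not affect the argument, since $n_1$ is still recovered from the first pair as $q_1 = p_1+1$.
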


 \begin{remark}
     Counting the number of ordered factorizations of an integer $n$ is known as Kalm\'ar's problem \cite{Kalmar, Hwang}.  The associated sequence $m(n)$ of ordered factorizations of $n$ can be found in \cite[A074206]{oeis} and has surprising connections to the Riemann zeta function.
 \end{remark}

This is evidence of a relationship between \textit{non}-primeness of degree with many different cuspidal curves of that degree.  See Section \ref{s:prime} for more on prime degree curves.

\subsubsection{Kashiwara curves}\label{kashiwaracurves}

The Kashiwara curves are unicuspidal rational curves with Kodaira dimension $\kappa(\bP^2 - C) = -\infty$ that are not AMS curves.  They come in six families.  Following the notation in \cite{FdBLVMHN1}, we enumerate them below.  In what follows, $N \ge 1$ is a positive integer, $l \ge 0$ is a nonnegative integer, and $\lambda_i \ge 0$ is a nonnegative integer.  Furthermore, if $l = 0$, then $\lambda_i \ge 1$.

\begin{enumerate}
    \item Type $II_{ge}$.  The curves of this type have degree $d = \phi_{2l+3}\phi_{2l+5}$ and cusp parameterized by a single Newton pair $(\phi_{2l+3}^2, \phi_{2l+5}^2)$.  These are the curves in Theorem \ref{thm:onepair}.3.
    \item Type $II_{sp}$.  The curves of this type has degree $d = \phi_{2l+3}$ and cusp parameterized by a single Newton pair $(\phi_{2l+1}, \phi_{2l+5})$.  These are the curves in Theorem \ref{thm:onepair}.4.
    \item Type $II^+_{ge}$. The curves of type $II^+(l,N; \lambda_1, \dots, \lambda_N)_{ge}$ have degree $d = \phi_{2l+3}\phi_{2l+5} n_1 \dots n_N$ where 
\[  n_i = \left\{ \begin{array}{c} 
\lambda_i \phi_{2l+3}^2 + \phi_{2l+3}\phi_{2l-1} -1 \text{ for } i \text{ odd } \\
    \lambda_i \phi_{2l+3}^2 + \phi_{2l+3}(\phi_{2l+3}-\phi_{2l-1}) -1 \text{ for } i \text{ even }
\end{array} \right. \]

The cusp is parameterized by $N+1$ Newton pairs: 
\[\left(n_1, \tfrac{\phi_{2l+5}^2n_1-1}{\phi_{2l+3}^2} \right) , \left(n_2 , \tfrac{n_1n_2-1}{\phi_{2l+3}^2} \right), \dots, \left(n_k , \tfrac{n_{k-1}n_k-1}{\phi_{2l+3}^2} \right) , \dots,  \left(  n_N, \tfrac{n_{N-1}n_N - 1}{\phi_{2l+3}^2} \right) ,\left(\phi_{2l+3}^2, n_N \right) .\]

    \item Type $II^+_{sp}$.  The curves of type $II^+(l,N; \lambda_1, \dots, \lambda_N)_{sp}$ have degree $d = \phi_{2l+5} n_1 \dots n_N$ where 
\[  n_i = \left\{ \begin{array}{c} 
\lambda_i \phi_{2l+3}^2 + \phi_{2l+3}\phi_{2l-1} -1 \text{ for } i \text{ odd } \\
    \lambda_i \phi_{2l+3}^2 + \phi_{2l+3}(\phi_{2l+3}-\phi_{2l-1}) -1 \text{ for } i \text{ even }
\end{array} \right. \]

The cusp is parameterized by $N+1$ Newton pairs:

\[ \left( n_1, \tfrac{\phi_{2l+5}^2n_1-1}{\phi_{2l+3}^2} \right) , \left( n_2, \tfrac{n_1n_2-1}{\phi_{2l+3}^2} \right), \dots,  \left( n_k, \tfrac{n_{k-1}n_2k-1}{\phi_{2l+3}^2} \right), \dots, \left(  n_N, \tfrac{n_{N-1}n_N - 1}{\phi_{2l+3}^2} \right),  \left( \phi_{2l+3}, \tfrac{n_N+1}{\phi_{2l+3}} \right) .  \]

    \item Type $II^-_{ge}$.  The curves of type $II^-(l,N; \lambda_1, \dots, \lambda_N)_{ge}$ have degree $d = \phi_{2l+3}\phi_{2l+1} n_1 \dots n_N$ where 
\[  n_i = \left\{ \begin{array}{c} 
\lambda_i \phi_{2l+3}^2 + \phi_{2l+3}\phi_{2l-1} -1 \text{ for } i \text{ even } \\
    \lambda_i \phi_{2l+3}^2 + \phi_{2l+3}(\phi_{2l+3}-\phi_{2l-1}) -1 \text{ for } i \text{ odd }
\end{array} \right. \]

The cusp is parameterized by $N+1$ Newton pairs: 
\[ \left( n_1, \tfrac{\phi_{2l+1}^2n_1-1}{\phi_{2l+3}^2} \right) ,  \left( n_2, \tfrac{n_1n_2-1}{\phi_{2l+3}^2} \right), \dots,  \left( n_k, \tfrac{n_{k-1}n_k-1}{\phi_{2l+3}^2} \right) , \dots,  \left(   n_N, \tfrac{n_{N-1}n_N - 1}{\phi_{2l+3}^2} \right) , \left( \phi_{2l+3}^2, n_N \right) .  \]

    \item Type $II^-_{sp}$. The curves of type $II^-(l,N; \lambda_1, \dots, \lambda_N)_{sp}$ have degree $d = \phi_{2l+1} n_1 \dots n_N$ where 
\[  n_i = \left\{ \begin{array}{c} 
\lambda_i \phi_{2l+3}^2 + \phi_{2l+3}\phi_{2l-1} -1 \text{ for } i \text{ even } \\
    \lambda_i \phi_{2l+3}^2 + \phi_{2l+3}(\phi_{2l+3}-\phi_{2l-1}) -1 \text{ for } i \text{ odd }
\end{array} \right. \]

The cusp is parameterized by $N+1$ Newton pairs: 
\[ \left(n_1, \tfrac{\phi_{2l+1}^2n_1-1}{\phi_{2l+3}^2}\right), \left(n_2, \tfrac{n_1n_2-1}{\phi_{2l+3}^2}\right), \dots,  \left(n_k, \tfrac{n_{k-1}n_k-1}{\phi_{2l+3}^2}\right),\dots, \left(  n_N, \tfrac{n_{N-1}n_N - 1}{\phi_{2l+3}^2}\right), \left(\phi_{2l+3}, \tfrac{n_N+1}{\phi_{2l+3}}\right) .  \]
    
\end{enumerate}

\begin{remark}
    In \cite{FdBLVMHN1}, the types are enumerated by a different set of invariants, the \textit{decorations of the Eisenbud-Neumann splice diagrams}, but these can be converted to the Newton pairs.  We illustrate this below for Type $II^+_{sp}$ and the process is similar for all other types.  From the invariants $p$ and $a$ given in \cite[\S 6.3]{FdBLVMHN1} to describe the Kashiwara curves, we compute the relevant Newton pairs $(p_1, q_1), ..., (p_{N+1}, q_{N+1})$ using the formulas $q_1 = a_1$, $q_k = a_k - p_kp_{k-1}a_{k-1}$ for $k > 1$ (see \cite[\S 2.1]{FdBLVMHN1}).

From \cite[\S 6.3]{FdBLVMHN1}, the Kashiwara curves of type $II^+_{sp}$ are given by $(p_i, a_i)$ where:
\begin{align*}
        p_k &= n_k \text{ for } 1 \leq k \leq N \text{, and } p_{N+1} = \phi_{2l+3}\\
        a_k &= \frac{\phi_{2l+5}^2n_1^2...n_{k-1}^2n_k-1}{\phi_{2l+3}^2} \text{ for } 1 \leq k \leq N \text{, and } a_{N+1} = \frac{\phi_{2l+5}n_1^2...n_N^2+1}{\phi_{2l+3}}
\end{align*}

Therefore, 
\begin{align*}
 \text{for } k = 1: \quad   q_1 &= a_1 \\
    &= \frac{\phi^2_{2l+5}n_1-1}{\phi_{2l+3}^2}\\
 \text{for }   2 \leq k \leq N: \quad q_k &= a_k - p_kp_{k-1}a_{k-1}\\
    &=\frac{\phi_{2l+5}^2n_1^2...n_{k-1}^2n_k-1}{\phi_{2l+3}^2}-n_kn_{k-1}\left(\frac{\phi_{2l+5}^2n_1^2...n_{k-2}^2n_{k-1}-1}{\phi_{2l+3}^2}\right)\\
    &=\frac{\phi_{2l+5}^2n_1^2...n_{k-1}^2n_k-1-\phi_{2l+5}^2n_1^2...n_{k-1}^2n_k-n_kn_{k-1}}{\phi_{2l+3}^2}\\
    &=\frac{n_kn_{k-1}-1}{\phi_{2l+3}^2}\\
  \text{for }  k = N+1: \quad q_{N+1} &= a_{N+1}-p_{N+1}p_Na_N\\
    &=\frac{\phi_{2l+5}^2n_1^2...n_N^2+1}{\phi_{2l+3}}-\phi_{2l+3}n_N\left(\frac{\phi_{2l+5}^2n_1^2...n_{N-1}^2n_N-1}{\phi^2_{2l+3}}\right)\\
    &=\frac{\phi_{2l+5}n_1^2...n_N^2+1-\phi_{2l+5}n_1^2...n_N^2+n_N}{\phi_{2l+3}}\\
    &=\frac{n_N+1}{\phi_{2l+3}}
\end{align*}
and we obtain the Newton pairs as claimed.
\end{remark}

\subsection{Kodaira dimension $0$}

As proved by Tsunoda in \cite{Tsunoda}, there exist no unicuspidal rational curves with $\kappa(\bP^2 - C) = 0$.

\subsection{Kodaira dimension $1$}\label{tonocurves}

In \cite{Tono2001}, Tono has classified all unicuspidal rational curves with $\kappa(\bP^2 - C) = 1$.  There are four families.  We follow the notation in \cite{Tono2001}.

\begin{enumerate}
    \item Type I(a).  These curves have degree $d = a^2 + 1$ where $a \ge 3$ and the parameterization  of the cusp is given by the two Newton pairs: 
\[ (a-1, a), (a, (a+1)^2) .\]

 These curves are precisely the curves in Theorem \ref{thm:twopairs}.5.
 
    \item Type I(b). These curves have degree  $d = a^2s + 1$ where $a \ge 3$ and $s \ge 2$ and the parameterization of the cusp is given by three Newton pairs:
\[ (a-1, a),(s, as+1), (a, as+1) .\]

 The only curves of degree at most 30 of Type I(b) are the degree 19 curve and the first degree 28 curve in Theorem \ref{thm:threepairs}.

 \item Type II(a). These curves have degree $d = 8n^2+4n+1$ where $n \ge 2$.  The parameterization of the cusp is given by the two Newton pairs: 
\[(n, 4n+1), (4n+1, (2n+1)^2) . \]
 These curves are precisely the curves in Theorem \ref{thm:twopairs}.6.

 \item Type II(b). These curves have degree $d = 2(4n+1)^2s-4n(2n+1)$ where $n \ge 2$ and $s \ge 1$.  The parameterization of the cusp is given by the three Newton pairs: 
\[(n(4s-1),(s-1)(4n+1)), (4s-1, (4n+1)s-n), (4n+1, (4n+1)s-n) . \]

 Because all curves of Type II(b) have degree greater than 30, they do not appear in the classification in Theorem \ref{thm:threepairs}.
\end{enumerate}

\begin{remark}
    Both Type I(a) and I(b) curves can be constructed by blow ups and blow downs from the plane curve $zx^{a-1} = y^a$ using Lemma \ref{lem:type1curves}.
\end{remark}

\subsection{Kodaira dimension $2$}\label{orevkovcurves}

There exists no complete classification of curves with $\kappa(\bP^2 - C) = 2$.  In fact, there are only two families of known examples, due to Orevkov in \cite{Orevkov}.  

\begin{enumerate}
    \item Family $C_{4k}$.  For $k = 1$, the curve $C_4$ is a degree $8$ curve with cusp parameterized by a single Newton pair $(3,22)$.  For $k > 1$, the curve $C_{4k}$ has degree $\phi_{4k+2}$ and has cusp given by the two Newton pairs $(\frac{\phi_{4k}}{3}, \frac{\phi_{4k+4}}{3}), (3,1)$.  These curves correspond to the curves in Theorem \ref{thm:onepair}.5 and Theorem \ref{thm:twopairs}.7, for $k = 1$ and $k > 1$ respectively. 

    \item Family $C_{4k}^*$.  For $k = 1$, the curve $C_4^*$ is a degree $16$ curve with cusp parameterized by a single Newton pair $(6,43)$.  For $k > 1$, the curve $C_{4k}^*$ has degree $2\phi_{4k+2}$ and has cusp given by the two Newton pairs $(\frac{\phi_{4k}}{3}, \frac{\phi_{4k+4}}{3}), (6,1)$. These curves correspond to the curves in Theorem \ref{thm:onepair}.6 and Theorem \ref{thm:twopairs}.8, for $k = 1$ and $k > 1$ respectively. 
\end{enumerate}

\subsubsection{A question on many Newton pairs}

For each family in the classification by Kodaira dimension parameterized by at most two Newton pairs or of degree at most 30, we have connected it with Section \ref{s:newtonpairs}.  Observe that all known curves with $\kappa(\bP^2 - C) \ge 0$ are parameterized by at most three Newton pairs.  This motivates the following question.

\begin{question}
      If $C$ is a rational unicuspidal plane curve parameterized by at least four Newton pairs, then is $\kappa(\bP^2  -C) = - \infty$?  In particular, is $C$ necessarily an AMS curve or a Kashiwara curve?   
\end{question}

Running a computer program suggests that the answer is yes. 

\subsection{Self-intersections and log canonical thresholds}

Motivated by the moduli perspective in the introduction, we conclude this section with several results on the self-intersection of $C$ in its minimal resolution, denoted by $\widetilde{C}^2 = n$, and its log canonical threshold $\lct(C)$.  These numbers have geometric significance and are especially important for moduli problems. For example, if $(\bP^2, \alpha C_d)$ is a pair in $\calM_d^{\alpha}$, we must have $\lct(C_d) \ge \alpha$ (if $\alpha \ge \frac{3}{d}$, this is by definition, and if $\alpha < \frac{3}{d}$, this is a consequence of Odaka's result that K-semistable pairs are klt \cite{Odaka}).  Furthermore, if $\widetilde{C}$ is to be contractible (as in Examples \ref{quintics} and \ref{octics}), then we must have $\widetilde{C}^2 < 0$.  

\begin{definition}
    Let $C \subset X$ be a curve in a smooth surface $X$.  The \textbf{log canonical threshold} of the curve, denoted $\lct(C)$, is 
    \[ \lct(C) = \sup_{r \in \bR} \{ (X,rC) \text{ is log canonical } \}. \]
\end{definition}

For the definition of log canonical singularity, see \cite{KollarMori}.  This is a measure of how `bad' a curve singularity is, with smaller log canonical threshold corresponding to a `worse' singularity.  If $C$ is smooth, then $\lct(C) = 1$.  For a unicuspidal curve with cusp $p \in C$ with Puiseux pairs $\{(P_i, Q_i) \}_{i = 1}^k$, the log canonical threshold is given by 

\begin{equation}\label{lctformula}
    \lct(C) = \frac{1}{P_1} + \frac{1}{Q_1}.
\end{equation}

\begin{definition}
    Let $C \subset X$ be a projective curve in a smooth projective surface.  Let $\pi: \widetilde{X} \to X$ be the minimal log resolution of the pair $(X,C)$, and let $\widetilde{C} = \pi^{-1}_*C$ be the strict transform of $C$.  The \textbf{self-intersection in the minimal resolution} is the integer $\widetilde{C}^2$.  We will refer to this number just as the \textbf{self-intersection}.   
\end{definition}

This self-intersection number is related to the embedding of $C$ in $X$ and the Kodaira dimension of the open surface $X - C$.  For a rational unicuspidal curve with Puiseux pairs $\{(P_i, Q_i)\}_{i=1}^k$, the self-intersection in the minimal resolution can be computed directly (see, e.g. \cite{Liu2014}): 

\begin{equation}\label{selfintersection}
    \widetilde{C}^2 = 3d - 1 - P_1 - \sum_{i=1}^k Q_i.
\end{equation}

For each of the curves in the previous section, we compute the log canonical threshold and the self intersection.

\subsubsection{Kodaira dimension $-\infty$}

\begin{prop}
    For all AMS curves of degree $d = n_1n_2 \dots n_r$, the log canonical threshold is $\frac{1}{(n_1-1)n_2\dots n_r} + \frac{1}{d}$ and the self-intersection is $n_r > 0$.  
\end{prop}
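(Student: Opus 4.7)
The plan is to apply the two explicit formulas already recorded in the paper: the LCT formula \eqref{lctformula}, $\lct(C) = \tfrac{1}{P_1} + \tfrac{1}{Q_1}$, and the self-intersection formula \eqref{selfintersection}, $\widetilde{C}^2 = 3d - 1 - P_1 - \sum_{j=1}^{k} Q_j$. The task reduces to converting the enumerated Newton pairs $(p_j, q_j)$ of each AMS family into the Puiseux data $P_j = p_j p_{j+1} \cdots p_k$, $Q_j = q_j p_{j+1} \cdots p_k$, and then doing the bookkeeping.

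For the LCT in the Type 1 family ($n_1 \ge 3$, $k = r$ Newton pairs), the first Newton pair $(n_1 - 1, n_1)$ yields
\[
P_1 = (n_1 - 1) n_2 \cdots n_r, \qquad Q_1 = n_1 n_2 \cdots n_r = d,
\]
giving $\lct(C) = \tfrac{1}{(n_1 - 1) n_2 \cdots n_r} + \tfrac{1}{d}$ directly. The Type 2 family ($n_1 = 2$, $k = r - 1$ Newton pairs) is handled by the same Newton-to-Puiseux conversion, noting that $(n_1 - 1) n_2 \cdots n_r = n_2 \cdots n_r = d/2 = P_1$ in this case.

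For the self-intersection, I would exploit the telescoping structure of the $Q_j$. Setting $M_s := n_s n_{s+1} \cdots n_r$ with $M_{r+1} := 1$, in Type 1 one checks that $Q_j = (n_{j-1} n_j - 1) M_{j+1} = M_{j-1} - M_{j+1}$ for $j \ge 2$, so
\[
\sum_{j=1}^{r} Q_j = d + \sum_{j=2}^{r} (M_{j-1} - M_{j+1}) = d + M_1 + M_2 - M_r - M_{r+1} = 2d + M_2 - n_r - 1.
\]
Substituting into \eqref{selfintersection} and using $n_1 M_2 = d$ collapses everything to $\widetilde{C}^2 = n_r$. The Type 2 computation is an index shift of the same telescoping identity: the atypical first pair contributes $Q_1 = (4 n_2 - 1) M_3 = 2d - M_3$, while the $j \ge 2$ terms telescope as $M_j - M_{j+2}$, and the sums again collapse to $\widetilde{C}^2 = n_r$. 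Positivity $n_r > 0$ is immediate from $n_r \ge 2$.

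The only real obstacle is indexing: Types 1 and 2 carry a different number of Newton pairs, and the first pair in each family is anomalous (not of the form $(n_{j-1} n_j, n_{j-1} n_j - 1)$ required for the clean telescoping identity). Consequently the $j = 1$ contribution must be pulled out and handled separately in both formulas; beyond that, the verification is purely algebraic.
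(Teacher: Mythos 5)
Your approach is the same as the paper's: the paper's proof consists of exactly the two sentences ``apply \eqref{lctformula} and \eqref{selfintersection}'' and ``convert Newton pairs to Puiseux pairs via $P_i = p_i\cdots p_k$, $Q_i = q_i\cdots p_k$,'' so your write-up is simply the detailed version of that computation. Your Type 1 LCT calculation and the telescoping argument for the self-intersection in both families are correct and complete; in particular $\widetilde{C}^2 = n_r$ checks out for Type 2 as well (e.g.\ it recovers the value $d/2$ for the $r=2$ curves of Theorem \ref{thm:onepair}.2).

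The gap is in your one-sentence dismissal of the Type 2 LCT. You verify only that $P_1 = n_2\cdots n_r = (n_1-1)n_2\cdots n_r$, but the claimed formula also requires $Q_1 = d$, and this fails: for Type 2 the first Newton pair is $(n_2, 4n_2-1)$, so $Q_1 = (4n_2-1)n_3\cdots n_r = 2d - n_3\cdots n_r \neq d$, giving $\lct(C) = \tfrac{2}{d} + \tfrac{1}{2d - n_3\cdots n_r}$ rather than $\tfrac{2}{d}+\tfrac{1}{d}$. This is not merely a slip in your argument --- the proposition as stated is false for Type 2 AMS curves, and the paper's own Table \ref{t:bigtable} confirms it: the degree $8$ curve with Newton pairs $(2,7),(2,3)$ (so $n_1=n_2=n_3=2$) is listed with $\lct = \tfrac14 + \tfrac1{14}$, whereas the proposition would give $\tfrac14+\tfrac18$; similarly the degree $2n$ curves of Theorem \ref{thm:onepair}.2 have $\lct = \tfrac2d + \tfrac1{2d-1}$, not $\tfrac3d$. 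So the honest conclusion is that your computation, carried out carefully, \emph{disproves} the LCT claim for the $n_1=2$ family; the statement should either be restricted to $n_1>2$ or corrected to $\tfrac1{P_1}+\tfrac1{Q_1}$ with $Q_1$ as above. The self-intersection claim $\widetilde{C}^2 = n_r > 0$ survives for both families.
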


\begin{proof}
    We directly apply the formulas \ref{lctformula} and \ref{selfintersection}.  The Puiseux pairs $\{(P_i, Q_i)\}_{i=1}^k$ are obtained from the Newton pairs $\{(p_i,q_i)\}_{i=1}^k$ by the formulas $P_i = p_i p_{i+1} \dots p_k$ and $Q_i = q_i p_{i+1} \dots p_k$.  
\end{proof}

\begin{prop}
    For the Kashiwara curves, the log canonical thresholds and self-intersections are given in Table \ref{lctK}. 
\begin{center}
\begin{table}[h]
\caption{Log canonical thresholds and self-intersection of Kashiwara curves.}
\label{lctK}
    \begin{tabular}{|c|c|c|c|}
    \hline
     type & degree & log canonical threshold & self intersection  \\
     \hline Type $II_{ge}$ & $\phi_{2l+3}\phi_{2l+5}$ & $\frac{1}{\phi_{2l+3}^2} + \frac{1}{\phi_{2l+5}^2}$ & $0$  \\
    \hline Type $II_{sp}$ & $\phi_{2l+3}$ & $\frac{1}{\phi_{2l+1}} + \frac{1}{\phi_{2l+5}}$ & $-1$ \\
    \hline Type $II^+_{ge}$& $\phi_{2l+3}\phi_{2l+5}n_1\dots n_N$ & $\frac{1}{(n_1...n_N)\phi_{2l+3}^2}+\frac{1}{(\phi_{2l+5}^2n_1-1)(n_2...n_N)}$ & $0$  \\
    \hline Type $II^+_{sp}$ & $\phi_{2l+5}n_1\dots n_N$ & $\frac{1}{(n_1...n_N)\phi_{2l+3}}+\frac{\phi_{2l+3}}{(\phi_{2l+5}^2n_1-1)(n_2...n_N)}$ & $-1$ \\
    \hline Type $II^-_{ge}$ & $\phi_{2l+3}\phi_{2l+1}n_1\dots n_N$ & $\frac{1}{(n_1...n_N)\phi_{2l+3}^2}+\frac{1}{(\phi_{2l+1}^2n_1-1)(n_2...n_N)}$ & $0$  \\
    \hline Type $II^-_{sp}$& $\phi_{2l+1}n_1\dots n_N$ &  $\frac{1}{(n_1...n_N)\phi_{2l+3}}+\frac{\phi_{2l+3}}{(\phi_{2l+1}^2n_1-1)(n_2...n_N)}$ & $-1$ \\
    \hline
    \end{tabular}
\end{table}
\end{center}
\end{prop}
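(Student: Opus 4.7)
The plan is to prove each row of Table \ref{lctK} by the same direct method used in the preceding AMS proposition: apply formulas \eqref{lctformula} and \eqref{selfintersection} after converting the Newton pairs listed in Section \ref{kashiwaracurves} to Puiseux pairs via $P_i = p_i p_{i+1} \cdots p_k$ and $Q_i = q_i p_{i+1} \cdots p_k$, where $k = N+1$ for the four multi-pair families and $k=1$ for Types $II_{ge}, II_{sp}$.

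First I will dispatch the log canonical thresholds, which follow immediately. For example, in Type $II^+_{ge}$ the conversion gives $P_1 = \phi_{2l+3}^2\, n_1 \cdots n_N$ and $Q_1 = (\phi_{2l+5}^2 n_1 - 1)(n_2 \cdots n_N)$, so $\lct(C) = \tfrac{1}{P_1} + \tfrac{1}{Q_1}$ reproduces the table entry; the other five families are analogous, and the single-pair types $II_{ge}, II_{sp}$ are trivial.

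The self-intersection calculation requires evaluating $\sum_{i=1}^{N+1} Q_i$, which telescopes in the indices $n_i$. Writing $a_j := n_j n_{j+1} \cdots n_N$ with $a_{N+1} := 1$, one has the key relation $n_{i-1} a_i = a_{i-1}$, so for Type $II^+_{ge}$ the sum collapses to
\[
\sum_{i=1}^{N+1} Q_i \;=\; \phi_{2l+5}^2\, a_1 - a_2 + \sum_{i=2}^{N}(a_{i-1} - a_{i+1}) + n_N \;=\; (1 + \phi_{2l+5}^2)\, a_1 - 1.
\]
Comparing with $3d - 1 - P_1 = (3\phi_{2l+3}\phi_{2l+5} - \phi_{2l+3}^2)\, a_1 - 1$, the claim $\widetilde{C}^2 = 0$ reduces to the Fibonacci identity $3\phi_{2l+3}\phi_{2l+5} - \phi_{2l+3}^2 - \phi_{2l+5}^2 = 1$, which, after substituting $\phi_{2l+1} = 3\phi_{2l+3} - \phi_{2l+5}$, is exactly Catalan's identity $\phi_{2l+1}\phi_{2l+5} - \phi_{2l+3}^2 = 1$. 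The same telescoping, together with the symmetric identity $3\phi_{2l+3}\phi_{2l+1} - \phi_{2l+3}^2 - \phi_{2l+1}^2 = 1$ (obtained by swapping the roles of $\phi_{2l+1}$ and $\phi_{2l+5}$), handles Type $II^-_{ge}$. The two "sp" types proceed identically, except that $p_{N+1} = \phi_{2l+3}$ rather than $\phi_{2l+3}^2$, which divides each $Q_i$ by $\phi_{2l+3}$ and shifts the final answer from $0$ to $-1$.

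I expect the main obstacle to be purely notational rather than mathematical: six families with nearly identical structure, distinguished only by which of $\phi_{2l+1}$ or $\phi_{2l+5}$ appears and whether the final Newton pair is $(\phi_{2l+3}^2, n_N)$ or $(\phi_{2l+3}, (n_N+1)/\phi_{2l+3})$. In writing up, I would consolidate the argument into a single telescoping lemma parameterized by $(l, N, \varepsilon, \sigma)$ with $\varepsilon \in \{+,-\}$ and $\sigma \in \{\mathrm{ge}, \mathrm{sp}\}$, so that the six families reduce to one calculation plus a sign-check using Catalan's identity.
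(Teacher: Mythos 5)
Your proposal is correct and follows essentially the same route as the paper: direct application of formulas \eqref{lctformula} and \eqref{selfintersection}, a telescoping evaluation of $\sum Q_i$, and reduction to the Catalan-type Fibonacci identity (the paper illustrates Type $II^+_{sp}$ and rewrites $3\phi_{2l+3}\phi_{2l+5}-\phi_{2l+3}^2-\phi_{2l+5}^2$ as $\phi_{2l+3}\phi_{2l+5}-\phi_{2l+4}^2$, whereas you illustrate Type $II^+_{ge}$ and substitute $\phi_{2l+1}=3\phi_{2l+3}-\phi_{2l+5}$, but these are the same identity with different parameters). The only cosmetic caveat is that in the $sp$ cases the passage from $0$ to $-1$ comes not merely from dividing each $Q_i$ by $\phi_{2l+3}$ but from the extra $+1$ in $Q_{N+1}=(n_N+1)/\phi_{2l+3}$ absorbing the $-a_{N+1}$ term, which your computation nonetheless handles correctly.
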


\begin{proof}
The log canonical threshold is a direct consequence of formula \ref{lctformula}.  We give the computation for the self-intersection of Kashiwara curves of type $II^+_{sp}$ as the computations for the other cases are similar.  This is an application of the formula \ref{selfintersection} and the Fibonacci identities 

\begin{equation}\label{fibonacci1}
    \phi_{n-2} + \phi_{n+2} = 3\phi_n
\end{equation}

\begin{equation}\label{fibonacci}
    \phi_{n}^2 - \phi_{n+r}\phi_{n-r} = (-1)^{n-r}\phi_r^2.
\end{equation}

By formula \ref{selfintersection}, $\widetilde{C}^2 = 3d-1-P_1-\sum_{i=1}^{N+1}Q_i$. For curves of type $II^+_{sp}$, we compute
\begin{align*}
    -Q' &= -Q_1 -Q_2 + ... -Q_{N+1}\\
    &= -\frac{\phi_{2l+5}^2(n_1...n_N)+(n_2...n_N)-(n_1...n_N)+(n_3...n_N)-\dots-(n_{N-1}n_N)+1-n_N-1}{\phi_{2l+3}} \\
    &= -\frac{(\phi_{2l+5}^2+1)(n_1...n_N)}{\phi_{2l+3}},\\
    \text{so } \widetilde{C}^2&=3d-1-P_1-Q'\\
    &=3\phi_{2l+5}(n_1...n_N)-1-\phi_{2l+3}(n_1...n_N) - \frac{(\phi_{2l+5}^2+1)(n_1...n_N)}{\phi_{2l+3}}\\
    &=\frac{3\phi_{2l+3}\phi_{2l+5}(n_1...n_N)-\phi_{2l+3}^2(n_1...n_N)-\phi_{2l+5}^2(n_1...n_N)-(n_1...n_N)}{\phi_{2l+3}}-1\\
    &=\frac{(n_1...n_N)}{\phi_{2l+3}}(3\phi_{2l+3}\phi_{2l+5} - \phi_{2l+3}^2 - \phi_{2l+5}^2-1) - 1\\
    &= \frac{(n_1...n_N)}{\phi_{2l+3}}(\phi_{2l+3}\phi_{2l+5} - (\phi_{2l+5}- \phi_{2l+3})^2-1) - 1\\
&= \frac{(n_1...n_N)}{\phi_{2l+3}}(\phi_{2l+3}\phi_{2l+5} - \phi_{2l+4}^2-1) - 1\\
&= \frac{(n_1...n_N)}{\phi_{2l+3}}(-(-1)^{2l+3}\phi_2-1) - 1 \text{ (by \ref{fibonacci})}\\
    &= \frac{(n_1...n_N)}{\phi_{2l+3}}(0)-1\\
    &=-1.
\end{align*}
\end{proof}

Following \cite{DS22}, rational unicuspidal plane curves with log canonical threshold at least $\frac{3}{d}$ are geometrically interesting as they give rise to smooth non-planar limits of plane curves contained non-normal surfaces (as in Example \ref{octics}).  The following question was posed in \cite[Question, p.39]{DS22}: 

\begin{question}\label{lctq}
    For what degree $d$ does there exist a unicuspidal rational plane curve of degree $d$ with log canonical threshold at least $\frac{3}{d}$?
\end{question}

We expect only few curves to have large log canonical threshold at least $\frac{3}{d}$.  No curves with $\kappa = -\infty$ have large $\lct$: 

\begin{lemma}
    For all curves with Kodaira dimension $-\infty$, the log canonical threshold is less than $\frac{3}{d}$.
\end{lemma}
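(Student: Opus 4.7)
The plan is to exploit the full classification: every Kodaira dimension $-\infty$ rational unicuspidal plane curve is either an AMS curve (Section \ref{amscurves}) or one of the six Kashiwara families (Section \ref{kashiwaracurves}). Since each subclass has an explicit Puiseux pair structure, the strategy is to verify $\lct(C) < 3/d$ by direct algebraic reduction of the formula $\lct = 1/P_1 + 1/Q_1$ for each family.

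First I would dispatch the AMS curves. For a type 1 AMS curve (where $n_1 \ge 3$), the first Puiseux pair is $((n_1-1)n_2 \cdots n_r,\, n_1 n_2 \cdots n_r)$, so clearing $d = n_1 n_2 \cdots n_r$ turns $\lct < 3/d$ into $\tfrac{n_1}{n_1-1} + 1 < 3$, i.e. $n_1 > 2$, which holds by hypothesis. For a type 2 AMS curve (where $n_1 = 2$), the first Puiseux pair is $(n_2 \cdots n_r,\, (4n_2-1)n_3 \cdots n_r)$, yielding $d \cdot \lct = 2 + \tfrac{2n_2}{4n_2 - 1} < 3$ since $\tfrac{2n_2}{4n_2 - 1} < 1$ for $n_2 \ge 1$.

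Next I would handle the six Kashiwara types by reading $\lct$ from the table in the preceding proposition. Writing $a = \phi_{2l+3}$, $b = \phi_{2l+5}$, $c = \phi_{2l+1}$, the key Fibonacci identities are \ref{fibonacci1} (giving $c + b = 3a$) and a consequence of \ref{fibonacci}: $3ab - a^2 - b^2 = 1$ (equivalently $a^2 + c^2 - 3ac = -1$). For Type $II_{ge}$, the inequality $d \cdot \lct < 3$ reduces to $a^2 + b^2 < 3ab$, which is exactly this identity. For Type $II_{sp}$, it reduces to $a^2 < bc$, again a Fibonacci identity. For each of the four parametric types $II^\pm_{ge/sp}$, after clearing denominators the inequality collapses to a bound of the form $bn_1 > c$ (for $II^+$) or $cn_1 > b$ (for $II^-$), which follows from the lower bound on $n_1$ imposed by its defining formula $n_1 = \lambda_i a^2 + (\text{positive term}) - 1$ together with the constraints on $l$ and $\lambda_i$.

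The principal obstacle is the four parametric Kashiwara types. The reductions involve nested products $n_1 \cdots n_N$ and Fibonacci identities of differing parities, so the bookkeeping is delicate. Strictness in the border cases (small $l$, minimal $\lambda_i$) may require invoking the validity constraint $p_1 < q_1$ on the first Newton pair to rule out degenerate parameter choices where the formulas do not correspond to actual cusps. Once these edge cases are dealt with, the remainder of the argument is routine case-by-case algebra.
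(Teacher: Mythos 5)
Your plan follows essentially the same route as the paper's proof: use the classification together with the formula $\lct = \tfrac{1}{P_1}+\tfrac{1}{Q_1}$, dispatch the AMS curves by direct computation, and reduce each Kashiwara family to a Fibonacci identity, arriving for type $II^+_{sp}$ at exactly the paper's final inequality $\phi_{2l+1} < \phi_{2l+5}n_1$ (your $c < bn_1$). Your remark that strictness for the $II^-$ families at the border $l=0$ requires excluding degenerate parameter choices (where $P_1$ would exceed $d$) is a genuine subtlety that the paper's ``the remaining cases are similar'' passes over, but it does not alter the method.
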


\begin{proof}
The computation for AMS curves is straightforward.  The computation for Kashiwara curves again uses the Fibonacci identities \ref{fibonacci1}, \ref{fibonacci}, and we illustrate it below for curves of type $II^+_{sp}$ (the remaining cases are similar).  

By \ref{lctformula}, the log canonical threshold is $\frac{1}{P_1} + \frac{1}{Q_1}$.  We aim to show that this is less than $\frac{3}{d}$. For curves of type $II^+_{sp}$, the desired inequality is:
\begin{align*}
    &\frac{1}{P_1}+\frac{1}{Q_1} < \frac{3}{d}\\
    &\iff \frac{1}{(n_1...n_N)\phi_{2l+3}}+\frac{\phi_{2l+3}}{(\phi_{2l+5}^2n_1-1)(n_2...n_N)} < \frac{3}{\phi_{2l+5}(n_1...n_N)}\\
    &\iff \frac{1}{\phi_{2l+3}n_1}+\frac{\phi_{2l+3}}{\phi_{2l+5}^2n_1-1} < \frac{3}{\phi_{2l+5}n_1}\\
    &\iff \frac{\phi_{2l+5}^2n_1-1+\phi_{2l+3}^2n_1}{\phi_{2l+3}n_1(\phi_{2l+5}^2n_1-1)} < \frac{3}{\phi_{2l+5}n_1}\\
    &\iff \frac{n_1(\phi_{2l+5}^2+\phi_{2l+3}^2)-1}{\phi_{2l+3}n_1(\phi_{2l+5}^2n_1-1)} < \frac{3}{\phi_{2l+5}n_1}\\    
    &\iff \phi_{2l+5}(n_1(\phi_{2l+5}^2 + \phi_{2l+3}^2)-1) < 3\phi_{2l+3}(\phi_{2l+5}^2n_1-1)\\
    &\iff \phi_{2l+5}\phi_{2l+3}^2n_1 + \phi_{2l+5}(\phi_{2l+5}^2n_1 -1) < 3\phi_{2l+3}(\phi_{2l+5}^2n_1-1)\\
    &\iff \phi_{2l+5}\phi_{2l+3}^2n_1 < (3\phi_{2l+3} - \phi_{2l+5})(\phi_{2l+5}^2n_1-1)\\
    &\iff \phi_{2l+5}\phi_{2l+3}^2n_1 < \phi_{2l+1}(\phi_{2l+5}^2n_1-1) \text{ (by \ref{fibonacci1})}\\
    &\iff \phi_{2l+1} < \phi_{2l+1}\phi_{2l+5}^2n_1-\phi_{2l+5}\phi_{2l+3}^2n_1\\
    &\iff \phi_{2l+1} < \phi_{2l+5}n_1(\phi_{2l+1}\phi_{2l+5}-\phi_{2l+3})\\
    &\iff \phi_{2l+1} < \phi_{2l+5}n_1(-1)(-1)^{2l+1} \text{ (by \ref{fibonacci})}\\
    &\iff \phi_{2l+1} < \phi_{2l+5}n_1,
\end{align*}
and as $n_1 \ge 1$ and $l \ge 0$, this last inequality holds.  Therefore, the log canonical threshold is less than $\frac{3}{d}$. 
\end{proof}

\subsubsection{Kodaira dimension $1$}

\begin{prop}
    For the unicuspidal rational curves with Kodaira dimension $1$, the log canonical thresholds and self intersections are given in the Table \ref{lctT}. 
\begin{center}
\begin{table}[h]
\caption{Log canonical thresholds and self-intersection of Tono curves.}
\label{lctT}
    \begin{tabular}{|c|c|c|c|}
\hline type & degree & log canonical threshold & self intersection \\
    \hline Type I(a) & $a^2 + 1, a \ge 3$ & $\frac{1}{a(a-1)} + \frac{1}{a^2}$ &  $1 - a \le -2$\\
    \hline Type I(b) & $a^2s + 1, a \ge 3$ & $\frac{1}{as(a-1)} + \frac{1}{a^2s}$ &  $1-a \le -2$ \\
    \hline Type II(a) & $8n^2+4n+1, n \ge 2$ & $\frac{1}{n(4n+1)} + \frac{1}{(4n+1)^2}$ &  $-n \le -2$ \\
    \hline Type II(b) & $2(4n+1)^2s - 4n(2n+1), n \ge 2$ & $\frac{1}{n(4n+1)(4s-1)} + \frac{1}{(4n+1)^2(s-1)}$ & $-n \le -2 $ \\
    \hline
    \end{tabular}
\end{table}
\end{center} 
\end{prop}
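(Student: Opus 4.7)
The plan is to compute both quantities directly from Equations \eqref{lctformula} and \eqref{selfintersection}, converting the listed Newton pairs $(p_i,q_i)$ into Puiseux pairs via $P_i = p_i p_{i+1} \cdots p_k$ and $Q_i = q_i p_{i+1} \cdots p_k$. Unlike the Kashiwara case handled in the previous proposition, no Fibonacci identities are needed; only polynomial arithmetic in the parameters $a, n, s$. I will handle the four types in order, since Types I(a) and II(a) are the ``rank two'' versions of Types I(b) and II(b) and the computations are instructively parallel.

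First, I will dispatch Type I(a). With $(p_1,q_1)=(a-1,a)$ and $(p_2,q_2)=(a,(a+1)^2)$ one reads off $P_1 = a(a-1)$, $Q_1 = a^2$, $Q_2 = (a+1)^2$, so Equation \eqref{lctformula} immediately gives the claimed $\lct$, and Equation \eqref{selfintersection} reduces to $3(a^2+1) - 1 - a(a-1) - a^2 - (a+1)^2$, which simplifies to $1-a$. The calculation for Type II(a) is the same shape, with $(p_1,q_1)=(n,4n+1)$, $(p_2,q_2)=(4n+1,(2n+1)^2)$, yielding $P_1 = n(4n+1)$, $Q_1 = (4n+1)^2$, $Q_2 = (2n+1)^2$, and after cancellation $\widetilde{C}^2 = -n$. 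Both $\lct$ values then follow directly.

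For Type I(b), the extra middle Newton pair $(s, as+1)$ contributes a factor of $s$ to $P_1$ and $Q_1$ but, crucially, its own $Q_2 = (as+1)a = a^2 s + a$ combines with $Q_3 = as+1$ so that the linear-in-$s$ terms in $3d - P_1 - Q_1 - Q_2 - Q_3$ cancel identically, leaving $\widetilde{C}^2 = 1-a$ exactly as in the rank-two case. Type II(b) is algebraically the most involved: from $(p_1,q_1,p_2,q_2,p_3,q_3) = \bigl(n(4s-1), (s-1)(4n+1), 4s-1, (4n+1)s-n, 4n+1, (4n+1)s-n\bigr)$ one obtains $P_1 = n(4s-1)^2(4n+1)$ and $Q_1 = (s-1)(4s-1)(4n+1)^2$, which directly yields the tabulated $\lct$ after observing $P_1 = n(4n+1)(4s-1) \cdot (4s-1)$ and a parallel factorization of $Q_1$. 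The self-intersection computation expands $3d - 1 - P_1 - Q_1 - Q_2 - Q_3$ with $d = 2(4n+1)^2 s - 4n(2n+1)$ and the various $Q_i$; the terms of order $s^2$ must cancel between $3d$ and $P_1 + Q_1$, and the remaining linear-in-$s$ and constant terms collapse to $-n$.

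Finally, the inequalities $1-a \le -2$ and $-n \le -2$ are automatic from the standing constraints $a \ge 3$ and $n \ge 2$ stated in Section \ref{tonocurves}. The main obstacle I anticipate is the Type II(b) self-intersection: the expression $3d-1-P_1-Q_1-Q_2-Q_3$ is a polynomial in $n$ and $s$ whose $s^2$, $ns^2$, and $n^2 s^2$ coefficients all have to vanish, and organizing the arithmetic cleanly will require grouping terms by degree in $s$ before simplifying; this is where a sign or factor error would be easiest to introduce and where I would double-check by evaluating at small $(n,s)$, for example $(n,s)=(2,2)$, against Tono's construction in \cite{Tono2001}.
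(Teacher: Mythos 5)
Your approach --- converting the listed Newton pairs to Puiseux pairs via $P_i = p_i p_{i+1}\cdots p_k$, $Q_i = q_i p_{i+1}\cdots p_k$ and substituting into \eqref{lctformula} and \eqref{selfintersection} --- is exactly the paper's, whose proof consists of the single sentence that the result is ``a direct application'' of those two formulas. Your computations for Types I(a), I(b) and II(a) are correct: the $s$-linear terms do cancel for Type I(b) as you claim, and the self-intersections come out to $1-a$, $1-a$ and $-n$ respectively.

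The Type II(b) case, however, does not close as written, and it is precisely the spot you flagged but never actually checked. From the Newton pairs listed in \S\ref{tonocurves}(4) one gets $P_1 = p_1p_2p_3 = n(4s-1)\cdot(4s-1)\cdot(4n+1) = n(4n+1)(4s-1)^2$ and $Q_1 = (s-1)(4s-1)(4n+1)^2$; each differs from the corresponding denominator in Table \ref{lctT} by a factor of $(4s-1)$, so your assertion that this factorization ``directly yields the tabulated $\lct$'' is false. The self-intersection fails as well: at $(n,s)=(2,2)$ one has $d=284$, $P_1=882$, $Q_1=567$, $Q_2=144$, $Q_3=16$, whence $3d-1-P_1-\sum Q_i = -758 \neq -2$. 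Worse, $P_1 = 882 > d = 284$ would make the multiplicity of the cusp exceed the degree of the curve, which is impossible, so the Newton pairs in \S\ref{tonocurves}(4) and/or the entries of Table \ref{lctT} are themselves inconsistent and must be corrected against \cite{Tono2001} before this row can be verified by anyone's computation. The sanity check you yourself proposed at $(n,s)=(2,2)$ exposes all of this immediately; carrying it out, and resolving the data discrepancy it reveals, is the missing step in your proof.
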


\begin{proof}
    The proof is again a direct application of \ref{lctformula} and \ref{selfintersection}.
\end{proof}

By a straightforward calculation, we observe the following, related to Question \ref{lctq}.

\begin{lemma}
    All unicuspidal rational curves of Kodaira dimension $1$ have log canonical threshold less than $\frac{3}{d}$.
\end{lemma}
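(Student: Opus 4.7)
The plan is to verify the inequality $\lct(C) < \tfrac{3}{d}$ separately for each of the four families of Tono curves listed in Section~\ref{tonocurves}, using the explicit formulas for $\lct(C)$ and $d$ recorded in Table~\ref{lctT}. In each case, after clearing denominators, the inequality reduces to a polynomial inequality in the defining parameters ($a$ and $s$ for Types I, or $n$ and $s$ for Types II), and the task is to verify that polynomial is positive on the allowed parameter range.

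For Type I(a), I would combine the two fractions in $\lct(C)=\tfrac{1}{a(a-1)}+\tfrac{1}{a^2}=\tfrac{2a-1}{a^2(a-1)}$ and cross-multiply to reduce $\lct(C) < \tfrac{3}{a^2+1}$ to the inequality $a^3 - 2a^2 - 2a + 1 > 0$ for $a \ge 3$; a quick check at $a=3$ together with the obvious growth of the leading cubic term settles this. Type I(b) is handled in parallel: the expression $\lct(C) = \tfrac{2a-1}{a^2 s (a-1)}$ reduces the inequality to $a^3 s - 2a^2 s - 2a + 1 > 0$ for $a\ge 3$ and $s\ge 2$, which follows by the same base-case-plus-monotonicity argument (and in fact degenerates to the I(a) case via $s=1$).

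Type II(a) is handled analogously: writing $\lct(C) = \tfrac{5n+1}{n(4n+1)^2}$ and clearing, the inequality reduces to $8n^3 - 4n^2 - 6n - 1 > 0$, verified at $n=2$ and growing for larger $n$. The main obstacle is expected to be Type II(b), where the denominators appearing in both $\lct(C)$ and $d$ are genuinely trivariate in $n$ and $s$, so the reduction to a polynomial inequality produces a more elaborate expression. Here I plan to first write $\lct(C) = \tfrac{(4n+1)^2(s-1) + n(4s-1)}{n(4n+1)^3(4s-1)(s-1)}$, cross-multiply with the explicit $d = 2(4n+1)^2 s - 4n(2n+1)$, and then collect terms by degree in $s$, checking positivity of each coefficient on the range $n \ge 2$, $s \ge 2$. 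The boundary case $s = 1$ requires a separate treatment (since the $\tfrac{1}{s-1}$ term is singular there); in that case one of the Newton pairs degenerates and the curve reduces to one already covered under a different type, so the case is absorbed into the earlier analysis.

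In each family the underlying reason the inequality holds is the same: $\lct(C)$ is essentially of order $\tfrac{1}{P_1}$ while $d$ grows at most like a constant multiple of $P_1$ (since $d$ appears as a linear combination of $P_1$ and the $Q_i$, and $P_1$ is the dominant factor in the degree expression), so $\tfrac{\lct(C)\cdot d}{3}$ tends to a constant strictly less than $1$ as the parameters grow. For a conceptually cleaner argument one could combine this asymptotic observation with a finite verification at the minimal parameter values, but the direct computational verification outlined above is the most straightforward path and requires only elementary algebra once the four cases are written out.
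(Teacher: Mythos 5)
Your overall strategy --- a case-by-case verification over the four Tono families using the formulas recorded in Table \ref{lctT} --- is exactly what the paper intends (its proof is literally ``by a straightforward calculation''), and your reductions for Types I(a), I(b), and II(a) are correct: the polynomials $a^3-2a^2-2a+1$, $a^3s-2a^2s-2a+1$, and $8n^3-4n^2-6n-1$ are indeed what one gets after clearing denominators, and they are positive on the stated ranges.

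However, there is a genuine gap in your treatment of Type II(b), and it is not merely the algebra slip in your combined fraction (the second term of $\lct(C)$ over the common denominator $n(4n+1)^3(4s-1)(s-1)$ should be $n(4n+1)(4s-1)$, not $n(4s-1)$). The real problem is that if you actually carry out the computation you defer, \emph{the inequality fails} with the data as recorded in Table \ref{lctT}: at $n=2$, $s=2$ one gets $\lct(C)=\tfrac{1}{126}+\tfrac{1}{81}\approx 0.0203$ while $\tfrac{3}{d}=\tfrac{3}{284}\approx 0.0106$, and your own asymptotic heuristic gives $\lct(C)\cdot d/3 \to \tfrac{4}{3}>1$ for this family (here $P_1$ and $Q_1$ are each roughly $d/2$, so $\tfrac{1}{P_1}+\tfrac{1}{Q_1}\approx \tfrac{4}{d}$). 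Note also that the tabulated $Q_1=(4n+1)^2(s-1)$ is \emph{smaller} than $P_1$ for small $s$ and vanishes at $s=1$, which is impossible for a Puiseux pair; and the $P_1,Q_1$ one would instead derive from the Newton pairs listed in \S\ref{tonocurves}.4 via $P_1=p_1p_2p_3$, $Q_1=q_1p_2p_3$ do not reproduce the table either (nor do they satisfy the genus constraint against the stated degree). So for Type II(b) the input data is internally inconsistent, and a correct proof must first resolve what the Puiseux pairs of these curves actually are (going back to Tono's classification) before applying formula \ref{lctformula}; the verification cannot be completed ``by the same base-case-plus-monotonicity argument'' from the stated formulas. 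Your dismissal of the $s=1$ boundary case as ``absorbed into an earlier type'' is likewise asserted without justification and does not match any of the listed two-pair families.
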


\subsubsection{Kodaira dimension $2$}

In each case for $\kappa = 2$, it is proven in \cite{Orevkov} that $\widetilde{C}^2 = - 2$.  In fact, \cite{Tono2012} proves that these are the only curves with $\kappa = 2$ and $\widetilde{C}^2 \ge -2$, i.e. if there exist other rational unicuspidal plane curves with $\kappa = 2$, they must have $\widetilde{C}^2 < -2$.  

Furthermore, these are the only known examples with $\lct > 3/d$.  We summarize the known properties (which are verifiable by straightforward computation): 

\begin{prop}
    For the unicuspidal rational curves with Kodaira dimension $2$, the log canonical thresholds and self intersections are given in Table \ref{lctO}. 
\begin{center}
\begin{table}[h]
\caption{Log canonical thresholds and self-intersection of Orevkov curves.}
\label{lctO}
    \begin{tabular}{|c|c|c|c|}
\hline type & degree & log canonical threshold & self intersection \\
    \hline $C_{4}$ & $8$ & $\frac{1}{3} + \frac{1}{22}$ &   $-2$\\
    \hline $C_{4k}$, $k > 1$ & $\phi_{4k+2}$ & $\frac{1}{\phi_{4k}} + \frac{1}{\phi_{4k+4}}$ &  $ -2$ \\
    \hline $C_{4}^*$ & $16$ & $\frac{1}{6} + \frac{1}{43}$ &  $-2$ \\
    \hline $C_{4k}^*$, $k > 1$ & $2\phi_{4k+2}$ & $\frac{1}{2\phi_{4k}} + \frac{1}{2\phi_{4k+4}}$ & $ -2 $ \\
    \hline
    \end{tabular}
\end{table}
\end{center} 
For every curve in this list, the log canonical threshold is \textit{greater} than $\frac{3}{d}$ (c.f Question \ref{lctq}). 
\end{prop}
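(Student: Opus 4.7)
The plan is to verify each row of Table \ref{lctO} by a direct application of formulas \ref{lctformula} and \ref{selfintersection}, using the Newton pair data supplied in Section \ref{orevkovcurves} to extract the Puiseux pairs. First I would convert: for $C_{4k}$ ($k>1$) with Newton pairs $(\phi_{4k}/3,\phi_{4k+4}/3),(3,1)$, the formulas $P_j=p_jp_{j+1}\cdots p_k$ and $Q_j=q_jp_{j+1}\cdots p_k$ give $P_1=\phi_{4k}$, $Q_1=\phi_{4k+4}$, $P_2=3$, $Q_2=1$, and analogously for $C_{4k}^*$ I would get $P_1=2\phi_{4k}$, $Q_1=2\phi_{4k+4}$, $P_2=6$, $Q_2=1$. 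The singleton cases $C_4$ and $C_4^*$ are immediate since the Newton pair equals the Puiseux pair. In all four rows the formula $\lct(C)=1/P_1+1/Q_1$ then reads off the lct column of Table \ref{lctO} with no further work.

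For the self-intersection column, plugging into \ref{selfintersection} for $C_{4k}$ ($k>1$) I would compute
\[
\widetilde{C}^2=3\phi_{4k+2}-1-\phi_{4k}-\phi_{4k+4}-1=3\phi_{4k+2}-2-(\phi_{4k}+\phi_{4k+4}),
\]
and then invoke the Fibonacci identity \ref{fibonacci1}, namely $\phi_{4k}+\phi_{4k+4}=3\phi_{4k+2}$, to collapse the right-hand side to $-2$. The identical telescoping, with an overall factor of $2$ in $d$, $P_1$, $Q_1$, handles $C_{4k}^*$. The cases $C_4$ and $C_4^*$ are direct arithmetic: $3(8)-1-3-22=-2$ and $3(16)-1-6-43=-2$.

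For the final claim that $\lct(C)>3/d$, the key tool is the Fibonacci identity \ref{fibonacci} with $n=4k+2$ and $r=2$, giving $\phi_{4k+2}^2=\phi_{4k}\phi_{4k+4}+1$. Clearing denominators in the desired inequality $1/\phi_{4k}+1/\phi_{4k+4}>3/\phi_{4k+2}$ and using \ref{fibonacci1} to replace $\phi_{4k}+\phi_{4k+4}$ by $3\phi_{4k+2}$ reduces the inequality precisely to $\phi_{4k+2}^2>\phi_{4k}\phi_{4k+4}$, which now follows from the identity. The factor of $2$ in $C_{4k}^*$ cancels on both sides, so the same reduction works. For $C_4$ one checks $1/3+1/22=25/66>3/8$ directly, and for $C_4^*$ that $1/6+1/43=49/258>3/16$.

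I do not anticipate a genuine obstacle: the entire argument is an application of the two Fibonacci identities already recorded as \ref{fibonacci1} and \ref{fibonacci} together with the general formulas \ref{lctformula} and \ref{selfintersection}. The only mildly delicate step is keeping the conversion from Newton pairs to Puiseux pairs straight in the presence of the denominator $3$ in $\phi_{4k}/3$ and $\phi_{4k+4}/3$; once that is done the remaining manipulations are routine cancellations.
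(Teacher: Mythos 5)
Your proposal is correct and follows exactly the route the paper intends: the paper merely asserts these entries are ``verifiable by straightforward computation'' (citing Orevkov for $\widetilde{C}^2=-2$), and you supply that computation via formulas \ref{lctformula} and \ref{selfintersection} together with the Fibonacci identities \ref{fibonacci1} and \ref{fibonacci}, with all conversions and arithmetic checking out.
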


\section{Remarks on prime degree curves}\label{s:prime}

Motivated by \cite{DS22}, we collect remarks on prime degree rational unicuspidal plane curves below.  

\begin{prop}\label{primedegree}
    Prime degree unicuspidal rational curves with $\kappa(\bP^2 - C) < 2$ or with cusp parameterized by at most two Newton pairs must be in the following list:
        \begin{enumerate}
            \item $d = p$ is any prime number, and the cusp of $C$ is parameterized by a single Newton pair $(d-1,d)$ (as in Theorem \ref{thm:onepair}.1),
            \item $d = \phi_j$ where $j \ge 5$ is an odd prime, and the cusp of $C$ is parameterized by a single Newton pair $(\phi_{j-2},\phi_{j+2})$ (as in Theorem \ref{thm:onepair}.4),
            \item $d = a^2s + 1$, where $d$ is prime and $a \ge 3$, and the cusp of $C$ is parameterized by two Newton pairs $(a-1,a),(a,(a+1)^2)$ (as in Theorem \ref{thm:twopairs}.5) if $s = 1$, or by three Newton pairs $(a-1,a), (s, as+1), (a,as+1)$ if $s > 1$,
            \item $d = 8n^2 + 4n + 1$, where $d$ is prime and $n \ge 2$, and the cusp of $C$ is parameterized by two Newton pairs $(n,4n+1),(4n+1,(2n+1)^2)$ (as in Theorem \ref{thm:twopairs}.6).
        \end{enumerate}
\end{prop}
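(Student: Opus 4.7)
The plan is to prove the proposition by a case-by-case inspection of the classifications already assembled in the paper, and to use elementary divisibility properties of Fibonacci numbers to rule out primality in most families.

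First I would observe that the two hypotheses cover disjoint but overlapping catalogs: if $\kappa(\bP^2 - C) < 2$, then by Tsunoda's theorem (Section \ref{s:kodairadim}) there are no curves with $\kappa = 0$, so $C$ must lie in one of the AMS (Section \ref{amscurves}), Kashiwara (Section \ref{kashiwaracurves}), or Tono (Section \ref{tonocurves}) families. If instead $C$ has a cusp parameterized by at most two Newton pairs, then $C$ appears in Theorem \ref{thm:onepair} or Theorem \ref{thm:twopairs}. In either situation, the question becomes: in which of these enumerated families can the degree $d$ be prime, and what Newton pairs does the cusp then have?

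Next I would sweep through the $\kappa < 2$ families. For AMS curves $d = n_1 \cdots n_r$ with $n_i \ge 2$, so $d$ prime forces $r = 1$ and produces case (1). For the six Kashiwara families, five of them have degrees that factor visibly as $\phi_{2l+3}\phi_{2l+5}\prod n_i$, $\phi_{2l+3}\phi_{2l+1}\prod n_i$, or $\phi_{2l+5}\prod n_i$ (respectively $\phi_{2l+1}\prod n_i$) with $N \ge 1$ so $\prod n_i \ge 2$; since $\phi_{2l+3}, \phi_{2l+5} \ge 2$ for $l \ge 0$, none of these degrees is prime. The remaining family, Type $II_{sp}$, has $d = \phi_{2l+3}$, and using the standard divisibility $\phi_m \mid \phi_n$ whenever $m \mid n$ together with $\phi_m \ge 2$ for $m \ge 3$, one sees that $\phi_j$ prime forces $j$ to be an odd prime, yielding case (2). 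For the four Tono families, Type I(a), I(b), and II(a) produce cases (3) with $s = 1$, (3) with $s > 1$, and (4) respectively, simply by reading off the Newton pairs from Section \ref{tonocurves}. Type II(b) has degree $d = 2[(4n+1)^2 s - 2n(2n+1)]$ which is even and, since $n \ge 2$, satisfies $d \ge 122$, so $d$ cannot be prime.

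Finally I would check that the at-most-two-Newton-pairs hypothesis does not introduce new prime degree families beyond those already covered. Going down the lists of Theorems \ref{thm:onepair} and \ref{thm:twopairs}, the cases with degree $d$ even (\ref{thm:onepair}.2, \ref{thm:twopairs}.4, \ref{thm:twopairs}.8), with an explicit product factorization (\ref{thm:onepair}.3 as $\phi_{j-2}\phi_j$, \ref{thm:twopairs}.1 as $\phi_{2k-1}\phi_{2k+1}(\cdots)$, \ref{thm:twopairs}.2, \ref{thm:twopairs}.3), with fixed composite degree ($d=8$ or $d=16$), or with degree $\phi_{4k+2}$ (case \ref{thm:twopairs}.7, composite via the identity $\phi_{2n} = \phi_n L_n$ for $n \ge 2$) all fail to give prime degree. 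The remaining cases reduce exactly to (1), (2), (3) with $s = 1$, and (4) of the proposition.

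I expect the main obstacle to be the bookkeeping across the many Kashiwara subfamilies and matching the Newton pair descriptions from Section \ref{kashiwaracurves} against those in the proposition; the arithmetic content, however, reduces to two simple facts about Fibonacci numbers, namely that $\phi_m \mid \phi_n$ when $m \mid n$ and that $\phi_{2n}$ is composite for $n \ge 2$. The only genuinely nontrivial check is the Type I(b) case, where one must verify that the cusp of the prime-degree Tono curve with $s > 1$ really does have the three Newton pairs $(a-1,a),(s,as+1),(a,as+1)$ listed in (3); this follows directly from the description of Type I(b) in Section \ref{tonocurves}.
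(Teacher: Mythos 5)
Your proposal follows essentially the same route as the paper's proof: for $\kappa(\bP^2-C)<2$ sweep the AMS, Kashiwara, and Tono classifications (with Tsunoda's vanishing for $\kappa=0$), reading off cases (1)--(4), and then verify that the only remaining curves with at most two Newton pairs --- the Orevkov curves of Theorems \ref{thm:onepair}.5--6 and \ref{thm:twopairs}.7--8 --- have composite degree via Fibonacci divisibility. The one small caveat is that your blanket justification for the Kashiwara families invokes $\phi_{2l+3},\phi_{2l+5}\ge 2$, which does not by itself dispose of the type $II^-_{sp}$ degree $\phi_{2l+1}n_1\cdots n_N$ when $l=0$ and $N=1$ (since $\phi_1=1$); the paper's own proof is equally terse on this point, so it does not meaningfully separate the two arguments.
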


\begin{proof}
    Every curve with $\kappa(\bP^2 - C)$ is either an AMS curve, a Kashiwara curve, or one of Tono's curves with $\kappa = 1$.  The only AMS curves of prime degree are those in Theorem \ref{thm:onepair}.1, and the only Kashiwara curves of prime degree are type $II_{sp}$, those in Theorem \ref{thm:onepair}.4.  The restriction that $j$ must be prime in this case follows because the Fibonacci number $\phi_j$ is divisible by $\phi_k$ for any $k$ dividing $j$.  Tono's curves with $\kappa = 1$ of type I(a), I(b), and II(a) give the remaining items in the list, noting that the curves of type II(b) have even degree.

    To complete the proof, we must then justify that any curve with $\kappa = 2$ parameterized by at most two Newton pairs does not have prime degree.  The only such curves are the degree $8$ or $16$ curve in Theorem \ref{thm:onepair}.5 or .6, which clearly have composite degree, or the degree $d = \phi_{4k+2}$ or $d = 2\phi_{4k+2}$ curves, where $k \ge 2$, in Theorem \ref{thm:twopairs}.7 or .8, which have composite degree because $\phi_{4k+2}$ is divisible by $\phi_{2k+1} >1$.
\end{proof}

Speculating that the Orevkov curves are the only rational unicuspidal curves with $\kappa = 2$, this is a complete list of possibilities for prime degree rational unicuspidal curves.  

In fact, the possible `nontrivial' prime degree curves (not isomorphic to $x^{p-1}z = y^d$) conjecturally come in infinite families. 

The first type of nontrivial prime degree curve are those of degree $\phi_j$, where $\phi_j$ is a Fibonacci number.  A famous conjecture in number theory is: 

\begin{conj}\label{inffibonacci}(Infinitude of Fibonacci primes.)
    There are infinitely many Fibonacci numbers that are prime.
\end{conj}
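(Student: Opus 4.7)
The statement is a celebrated open conjecture in number theory, of roughly the same depth as the infinitude of Mersenne primes or the Landau $n^2+1$ problem mentioned earlier in the introduction. I cannot propose a genuine proof; the plan below only sketches the heuristic support and identifies where every known attack breaks down.

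The natural first reduction is elementary: since $\phi_m \mid \phi_n$ whenever $m \mid n$, the Fibonacci number $\phi_n$ can be prime only when $n = 4$ (giving $\phi_4 = 3$) or $n$ is itself prime. Thus it suffices to produce infinitely many primes $p$ for which $\phi_p$ is also prime. A Cram\'er-type heuristic predicts that a random integer of size $\phi_p$ is prime with probability $\sim 1/\log \phi_p = 1/(p \log \gamma)$, where $\gamma = (1+\sqrt{5})/2$. Corrections coming from the arithmetic of Lucas sequences (the constraints forcing $p \mid q\pm 1$ for any prime divisor $q$ of $\phi_p$, and the exclusion of certain small primes as divisors) multiply this by a bounded positive constant. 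Summing over primes $p \le X$ yields an expected count on the order of $\log\log X$, which diverges, providing strong heuristic support for the conjecture.

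A direct attack would factor into three steps: (a) apply Zsigmondy's theorem to extract a primitive prime divisor $q_p$ of $\phi_p$ for each prime $p$; (b) sieve out small prime factors of $\phi_p$ using the congruence constraints on divisors of Fibonacci numbers; and (c) show that for infinitely many $p$ the cofactor is trivial, i.e.\ $\phi_p = q_p$. The main obstacle is step (c). Sieve theory in this regime yields only upper bounds, and the sequence $\{\phi_p\}$ has density $\sim 1/\log N$ among integers, placing the problem in the same parity-problem class as the twin prime and Mersenne prime conjectures. No technique currently available in analytic or algebraic number theory can produce infinitely many primes in a sequence this sparse without a parity-breaking input. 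An honest resolution therefore appears to require a fundamentally new method; within the present paper the conjecture is invoked only to predict an infinite family of prime degrees $d = \phi_j$ admitting nontrivial rational unicuspidal plane curves, as in item (2) of Proposition \ref{primedegree}.
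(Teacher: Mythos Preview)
Your assessment is correct and matches the paper's treatment exactly: the statement is labeled a \emph{Conjecture}, not a theorem, and the paper offers no proof whatsoever. Immediately after stating it, the paper simply writes ``If true, this asserts that there are infinitely many prime degrees for which there exists a unicuspidal rational curve with a cusp as in Proposition~\ref{primedegree}.2,'' and moves on. Your heuristic discussion (the divisibility reduction to prime index, the Cram\'er-type density estimate, and the parity-problem obstruction) goes well beyond anything the paper contains, but is consistent with its stance that the conjecture is genuinely open and is invoked only conditionally.
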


If true, this asserts that there are infinitely many prime degrees for which there exists a unicuspidal rational curve with a cusp as in Proposition \ref{primedegree}.2. 

Similarly, another famous conjecture in number theory, posed by Landau in 1912 is: 

\begin{conj}\label{infn2}(Landau's Fourth Problem.)
    There are infinitely many primes of the form $n^2 + 1$. 
\end{conj}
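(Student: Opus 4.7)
This conjecture is one of Landau's four problems on primes from 1912, and despite more than a century of sustained attention it remains \emph{open}; my ``proof proposal'' is therefore an outline of attack strategies together with an honest account of where each founders.

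The natural starting point is the Bateman--Horn heuristic, which predicts
\[
\#\{n\le N : n^{2}+1 \text{ is prime}\} \;\sim\; \frac{1}{2}\prod_{p\text{ odd prime}}\!\left(1-\frac{\chi_{-4}(p)}{p-1}\right)\cdot \frac{N}{\log N},
\]
obtained from a product of local densities; the admissibility of the polynomial $n^2+1$ (irreducibility, no fixed prime divisor) makes this the expected main term. The first step of any proof would be to establish a matching upper bound, and this is classical: the Selberg sieve, or the large sieve of Montgomery--Vaughan, yields an unconditional bound of the order $N/\log N$, differing from the predicted constant only by a factor of roughly $8$. The second step would be to produce a lower bound of the same order, and it is precisely here that all known methods collapse.

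The principal obstacle is Selberg's \emph{parity problem}: a sieve using only congruence data modulo primes up to $N^{\theta}$ cannot distinguish integers with an even number of prime factors from those with an odd number, and thus cannot, in isolation, produce any lower bound on primes as opposed to almost-primes. The strongest unconditional progress in this direction is Iwaniec's 1978 theorem that there are infinitely many $n$ for which $n^{2}+1$ has at most two prime factors; improving ``two'' to ``one'' is exactly the parity barrier. A more promising line would be to adapt the bilinear-form innovation of Friedlander--Iwaniec (1998), who overcame parity in the sparse polynomial family $a^{2}+b^{4}$ using delicate control of type-I and type-II sums, but the single-variable family $n^{2}+1$ lacks the second averaging variable that drives their method, and no variant has yet broken parity here. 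Any genuine proof strategy must find a substitute for that extra averaging---perhaps through automorphic input via $\mathrm{GL}(2)$ $L$-functions attached to the relevant binary form, or through a Vinogradov-type decomposition exploiting the $\bZ[i]$-structure of $n^2+1$---but all such attempts to date have either reduced to estimates that are themselves equivalent to the conjecture, or have hit a parity-type wall at an intermediate stage.

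I should emphasize, however, that for the applications of Section~\ref{s:prime} to classifying prime-degree rational unicuspidal plane curves, the full strength of Conjecture~\ref{infn2} is not required: by Proposition~\ref{primedegree}.3 with $s>1$, it is enough to know that some arithmetic progression $a^{2}\bZ+1$ with $a\ge 3$ contains infinitely many primes, which is immediate from Dirichlet's theorem and suffices to prove Theorem~\ref{main3}. A resolution of Landau's conjecture would add a third, independent infinite family of ``nontrivial'' prime-degree rational unicuspidal curves (the Type~I(a) family) to the picture, alongside the Fibonacci-prime family predicted by Conjecture~\ref{inffibonacci} and the unconditional Dirichlet family arising from the Type~I(b) curves.
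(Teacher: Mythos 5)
This statement is Landau's fourth problem, a famous open conjecture; the paper records it as a \texttt{conj} environment with no proof and invokes it only conditionally, so there is no argument of the paper's to compare yours against. You are right to decline to prove it, and your account of the obstructions is accurate: the Bateman--Horn prediction, the matching sieve upper bound, Iwaniec's ``at most two prime factors'' theorem, and the parity barrier that separates almost-primes from primes (together with the observation that the Friedlander--Iwaniec $a^2+b^4$ breakthrough relies on a second averaging variable that $n^2+1$ does not have) is a fair summary of the state of the art. Your closing remark is also correct and is the one point of genuine contact with the paper's logic: Theorem \ref{main3} does not require Conjecture \ref{infn2}, only the infinitude of primes of the form $a^2s+1$, and for any fixed $a\ge 3$ Dirichlet's theorem on primes in the progression $1 \bmod a^2$ already supplies this, feeding into Proposition \ref{primedegree}.3. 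That is a more elementary justification than the paper's appeal to \cite{BaierZhao}, whose full strength (an averaged count over quadratic progressions) is not needed for mere infinitude. The role Conjecture \ref{infn2} actually plays in Section \ref{s:prime} is the one you describe: if true, it would single out the Type I(a) subfamily ($s=1$) as its own infinite family of nontrivial prime-degree unicuspidal curves, alongside the conjectural Fibonacci family of Conjecture \ref{inffibonacci}.
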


If true, this asserts the infinitude of prime degrees for which there exists a curve with cusp as in Proposition \ref{primedegree}.3.  While this conjecture is unsolved, by \cite{BaierZhao}, there are infinitely many primes of the form $a^2 s + 1$ (when one allows both $a$ and $s$ to vary).  Therefore, there do exist infinitely many prime degrees for which there is a nontrivial rational unicuspidal plane curve of Type I(a) or Type I(b) in Tono's classification.  This is the content of the following statement.

 \begin{theorem}(= Theorem \ref{main3})
     There are infinitely many primes $p$ such that there exists a unicuspidal rational curve $C$ of degree $p$ with Kodaira dimension $\kappa (\bP^2 - C) = 1$.
 \end{theorem}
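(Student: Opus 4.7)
The plan is to combine Tono's classification of unicuspidal rational plane curves with $\kappa(\bP^2 - C) = 1$ (Section \ref{tonocurves}) with a theorem from analytic number theory. Recall that Tono's Type I(a) curves have degree $a^2 + 1$ for any $a \ge 3$, and Type I(b) curves have degree $a^2 s + 1$ for any $a \ge 3$ and $s \ge 2$; both families satisfy $\kappa(\bP^2 - C) = 1$ by construction. The existence of every such curve is already established constructively in Lemma \ref{lem:type1curves}, which builds each of them by a prescribed sequence of blow ups and blow downs starting from the unicuspidal curve $zx^{a-1} = y^a$. So no further geometric input is needed: the geometric side of the problem has been fully handled in Section \ref{s:newtonpairs}.

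It therefore suffices to exhibit an infinite collection of primes in the set $\{a^2 s + 1 : a \ge 3,\ s \ge 1\}$, and this is exactly the content of the theorem of Baier and Zhao \cite{BaierZhao}, which guarantees infinitely many primes of the form $a^2 s + 1$ as $(a,s)$ jointly varies. For each such prime $p = a^2 s + 1$, the corresponding Type I(a) or Type I(b) curve produced by Lemma \ref{lem:type1curves} is a rational unicuspidal plane curve of degree $p$ with $\kappa(\bP^2 - C) = 1$, which is precisely the statement to be proved.

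The main point requiring care is confirming that the Baier-Zhao sieve genuinely produces infinitely many primes with $a \ge 3$, rather than having the infinitude come entirely from the trivial contributions $a \in \{1,2\}$ (which would reduce the claim to Dirichlet's theorem on primes in the arithmetic progressions $s+1$ or $4s+1$, and would not produce a Tono curve). This is not a geometric obstacle but an interpretive one: the quantitative lower bound for the number of representable primes up to $X$ in \cite{BaierZhao} grows faster than what the two fixed progressions $a=1, 2$ could contribute, so infinitely many of their primes must come from $a \ge 3$. Once this is in hand, the theorem follows immediately.
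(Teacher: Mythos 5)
Your proposal matches the paper's proof: both deduce the result by combining Tono's Type I(a)/I(b) classification of $\kappa = 1$ curves (with existence supplied by Lemma \ref{lem:type1curves}) with the Baier--Zhao theorem on primes of the form $a^2 s + 1$. Your extra check that infinitely many of these primes actually have $a \ge 3$ is a legitimate point that the paper glosses over; it can also be settled without any sieve input by fixing $a = 3$ and applying Dirichlet's theorem to the progression $9s + 1$, which already yields infinitely many Type I(b) degrees.
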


 \begin{example}
    Two such curves are the degree $19 = 3^2 \cdot 2 + 1$ curve and first degree $28 = 3^2 \cdot 3 + 1$ curve in Theorem \ref{thm:threepairs}.  The theorem asserts that there are infinitely many others. 
\end{example}

More generally than the case $n^2 + 1$ or $sn^2 + 1$, given any polynomial $f(n)$ with integer coefficients, one may ask how often $f(n)$ is prime for integer input $n$.  One conjecture in this direction is the Bunyakovsky conjecture: 

\begin{conj}(Bunyakovsky Conjecture.)
Suppose $f(n)$ is a polynomial with integer coefficients such that: 
    \begin{enumerate}
        \item the leading coefficient is positive; 
        \item the polynomial is irreducible over $\bQ$; and 
        \item the values $\{ f(1), f(2), f(3), \dots \}$ have no common factor.
    \end{enumerate}
Then, $f(n)$ is prime infinitely often.
\end{conj}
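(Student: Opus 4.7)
The Bunyakovsky Conjecture, as stated, is one of the most famous open problems in analytic number theory, so any honest ``proof proposal'' must begin by acknowledging that no proof is currently known beyond the degree-one case. In degree one, where $f(n) = an + b$ with $\gcd(a,b) = 1$, the statement is exactly Dirichlet's theorem on primes in arithmetic progressions, provable via $L$-functions of Dirichlet characters mod $a$ and a non-vanishing argument at $s=1$. My plan would be to first treat this case rigorously as a warm-up, then to survey the obstructions that make degree $\ge 2$ inaccessible.

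For $\deg f \ge 2$, the natural strategy is to estimate the counting function $\pi_f(X) := \#\{n \le X : f(n) \text{ prime}\}$ via sieve methods. The heuristic Bateman--Horn conjecture predicts
\[
\pi_f(X) \sim \frac{C_f}{\deg f} \cdot \frac{X}{\log X}
\]
where $C_f = \prod_p (1 - \omega_f(p)/p)(1-1/p)^{-1}$ and $\omega_f(p) = \#\{n \bmod p : f(n) \equiv 0\}$. The plan would be to attempt a lower bound of the expected order by combining an upper bound sieve (Selberg or Brun) with Chen-type switching arguments, hoping to detect primes rather than almost-primes. Step one: reduce to showing $C_f > 0$, which follows from conditions (2)--(3) by standard density arguments. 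Step two: apply a weighted sieve to show $f(n)$ has at most $r$ prime factors infinitely often, for some small $r$ depending on $\deg f$. For $f(n) = n^2+1$, Iwaniec's linear sieve gives $r \le 2$; a similar bound should follow for any quadratic satisfying the hypotheses. Step three --- and this is the fundamental obstacle --- would be to push $r = 2$ down to $r = 1$.

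The main obstruction is the \emph{parity problem} of Selberg: classical sieves cannot by themselves distinguish integers with an odd number of prime factors from those with an even number, so they cannot in principle isolate primes from products of two primes. Breaking parity in specific cases has required remarkable innovations, e.g.\ the Friedlander--Iwaniec theorem for $a^2 + b^4$ and Heath-Brown's theorem for $a^3 + 2b^3$, each exploiting a bilinear structure inherited from the specific algebraic form (a norm form of a quadratic or cubic field) that a generic one-variable $f$ does not possess. My proposal would therefore be to attempt an analogous bilinear decomposition for $f(n)$ by lifting to the ring of integers of the splitting field of $f$ and sieving the resulting ideal counting function, but I expect this to fail in general precisely because a one-variable polynomial gives rise to a one-dimensional sieve problem with no second variable to exploit.

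In summary, the realistic ``plan'' is to prove the linear case via Dirichlet, reduce the general case to a bilinear form estimate on ideal norms, and attempt to break parity via a Friedlander--Iwaniec style identity; I fully expect the parity barrier to be insurmountable with current technology, and any complete proof would constitute a major breakthrough. For the purposes of the present article, the conjecture is invoked only to motivate the expectation that prime-degree rational unicuspidal curves of Types I(a), I(b), and II(a) occur infinitely often within each family, and the unconditional statement of Theorem \ref{main3} already follows from the weaker result of Baier--Zhao cited above.
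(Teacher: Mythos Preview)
Your assessment is correct: the statement is a famous open conjecture, the paper offers no proof of it, and you have accurately identified both its status and its role in the paper. The authors state Bunyakovsky's conjecture purely as motivation, then verify in the two subsequent lemmas that the particular polynomials $f(n)=sn^2+1$ and $f(n)=8n^2+4n+1$ satisfy its hypotheses; the only unconditional infinitude result (Theorem~\ref{main3}) is obtained, as you note, via the cited theorem of Baier--Zhao rather than from the conjecture itself. There is nothing to compare here---your survey of the degree-one case, sieve heuristics, and the parity obstruction is a reasonable and honest response to being asked to ``prove'' an open problem.
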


There are in fact further conjectures on the asymptotic number of integers $n$ such that $f(n)$ is prime (part of the broader Bateman-Horn conjecture).  For our purposes, this conjecture would imply the infinitude of prime degrees giving Tono curves of Type I(b) and Type II(a). 

\begin{lemma}
    If $f(n) = sn^2 + 1$, the conditions of Bunyakovsky's conjecture are satisfied. 
\end{lemma}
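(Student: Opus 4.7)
The plan is to verify the three hypotheses of Bunyakovsky's conjecture for $f(n) = sn^2 + 1$ in turn, where $s \geq 1$ is a fixed positive integer; each reduces to an elementary observation.

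First, the leading coefficient of $f$ is $s > 0$, so hypothesis (1) holds immediately. For hypothesis (2), I would note that the discriminant of $f$ equals $-4s < 0$, so $f$ has no real roots; hence $f$ is irreducible already over $\mathbb{R}$, and \emph{a fortiori} over $\mathbb{Q}$.

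For hypothesis (3), the plan is to show $\gcd\{f(n) : n \geq 1\} = 1$ by exhibiting three values of $f$ whose gcd is $1$. Taking $f(1) = s+1$, $f(2) = 4s+1$, and $f(3) = 9s+1$, any prime dividing all three would also divide the integer combinations $f(2) - 4f(1) = -3$ and $f(3) - 9f(1) = -8$; since $\gcd(3,8) = 1$, no such prime exists, so no prime divides every value of $f$ on the positive integers.

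There is no genuine obstacle here---the lemma is a routine verification of hypotheses. The deep content lies not in the lemma itself but in the Bunyakovsky conjecture, which, if true, would yield the infinitude of primes of the form $sn^2 + 1$ for each fixed $s$, and in combination with Proposition \ref{primedegree} and Tono's classification would upgrade Theorem \ref{main3} by producing, for every fixed $s \geq 1$, infinitely many prime degrees supporting a rational unicuspidal curve of Tono Type I(a) (when $s=1$) or Type I(b) (when $s \geq 2$).
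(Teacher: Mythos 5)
Your proof is correct. The overall strategy is the same as the paper's --- conditions (1) and (2) are dispatched as routine, and condition (3) is verified by showing that finitely many values of $f$ have no common prime factor --- but your execution of step (3) is genuinely cleaner. The paper splits into cases according to $s \bmod 3$ (and treats $s=1$, $s=2$ separately), computing pairwise gcds such as $\gcd(s+1,4s+1)=\gcd(s-2,3)$ and $\gcd(4s+1,9s+1)=\gcd(s-1,5)$ before concluding that $f(1)$, $f(2)$, $f(3)$ share no factor. You instead observe that $f(2)-4f(1)=-3$ and $f(3)-9f(1)=-8$, so any common prime divisor of $f(1),f(2),f(3)$ would divide $\gcd(3,8)=1$; this eliminates $s$ from the computation entirely and handles all $s\ge 1$ uniformly in one line. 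Your discriminant argument for irreducibility is also fine (a real quadratic with negative discriminant has no rational root, hence is irreducible over $\bQ$), and since $sn^2+1$ is primitive there is no issue with the $\bZ$ versus $\bQ$ distinction. Both proofs are valid; yours buys brevity and uniformity at no cost.
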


\begin{proof}
    (1) and (2) are clear so we prove (3).  If $s = 1$, then $\gcd(f(1),f(2)) = 1$, so (3) holds.  If $s = 2$, then $\gcd(f(1), f(3)) = 1$ so again (3) holds.  Now, assume $s \ge 3$.  If $s \ne 2 \pmod 3$, then \[\gcd(f(1),f(2)) = \gcd(s+1, 4s+1) = \gcd(s+1,s-2) = \gcd(s-2,3) = 1\] so (3) holds.  If $s = 2 \pmod 3$, then $\gcd(f(1),f(2)) = 3$, but \[\gcd(f(2),f(3)) = \gcd(4s+1,9s+1) = \gcd(4s+1,s-1) = \gcd(s-1,5)\] so $\gcd(f(2),f(3))$ must equal $1$ or $5$.  This implies that $f(1), f(2),$ and $f(3)$ have no common factors, as desired. 
\end{proof}

Therefore, if Bunyakovsky's conjecture holds, this implies that there are infinitely many primes of the form $a^2 s + 1$ for any \textit{fixed} $s$, corresponding to rational unicuspidal curves of degree $d = a^2 s + 1$ of Type I(a) and Type I(b) in Tono's classification.

\begin{lemma}
    If $f(n) = 8n^2+4n + 1$, the conditions of Bunyakovsky's conjecture are satisfied. 
\end{lemma}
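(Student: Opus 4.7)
The plan is to verify directly the three conditions of Bunyakovsky's conjecture for $f(n) = 8n^2 + 4n + 1$. Conditions (1) and (2) should be immediate, while condition (3) will require a small numerical observation rather than a case analysis on $s$ like in the preceding lemma.

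For condition (1), the leading coefficient is $8 > 0$. For condition (2), the discriminant of $8n^2 + 4n + 1$ is $4^2 - 4 \cdot 8 \cdot 1 = -16 < 0$, so $f$ has no real roots and is therefore irreducible over $\mathbb{R}$, hence over $\mathbb{Q}$. So the only substantive step is condition (3).

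For condition (3), I would begin by evaluating $f$ at small integers: $f(1) = 13$ and $f(2) = 41$. Since $f(1) = 13$ is prime, any common factor of the values $\{f(1), f(2), f(3), \dots\}$ must divide $13$, so it is either $1$ or $13$. It then suffices to note $\gcd(13, 41) = 1$, since $41$ is also prime (or simply not a multiple of $13$). This forces the common factor to equal $1$, completing the verification.

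The main obstacle (if any) is condition (3), but unlike in the previous lemma where $s$ was a free parameter forcing a case split modulo $3$, here the polynomial is fixed and the argument reduces to a one-line computation via the primality of $f(1) = 13$. Thus no real obstacle is anticipated, and the proof should be very short.
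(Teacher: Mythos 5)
Your proposal is correct and matches the paper's proof essentially verbatim: the paper also dismisses conditions (1) and (2) as clear and verifies (3) by computing $f(1)=13$ and $f(2)=41$ and noting they share no common factor. Your extra remarks (the discriminant computation for irreducibility, and routing the gcd argument through the primality of $13$) are fine but add nothing beyond the paper's one-line argument.
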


\begin{proof}
    (1) and (2) are clear, and $f(1) = 13$ and $f(2) = 41$, so $f(1)$ and $f(2)$ have no common factors, thus (3) holds. 
\end{proof}

Therefore, if Bunyakovsky's conjecture holds, there are also infinitely many primes of the form $8n^2 + 4n + 1$ corresponding to rational unicuspidal curves of degree $d = 8n^2 + 4n +1$ of Type II(a) in Tono's classification.

\section{Table of low degree rational unicuspidal plane curves}\label{s:table}

The results of the first three sections are combined in the following Theorem. 

\begin{theorem}(= Theorem \ref{main1})
    For $ d \le 30$, the following table gives an exhaustive list of all rational unicuspidal degree $d$ plane curves and their properties. See \cref{Footnote} for a description of what is meant by the parameterization column. 

\renewcommand*{\arraystretch}{1.4}
\begin{center}
{\scriptsize
\begin{longtable}{|c|c|c|c|c|c|c|c|}
\caption{All rational unicuspidal plane curves of degree $\le 30$.}\label{t:bigtable}\\

    \hline degree & Newton pair(s) & parametrization &  $\overline{m}$ & $\kappa$ & $\lct(C)$ & $\widetilde{C}^2$ & reference \\
    \hline $d \ge 3$ & $(d-1,d)$ & $  (t^{d-1},t^{d})$ & $(d)$ &  $- \infty$ & $\frac{1}{d-1} + \frac{1}{d}$ & $d$ & \ref{thm:onepair}.1, \S \ref{amscurves} \\
    \hline $\begin{array}{c}
         d = 2n  \\
         n \ge 2 
    \end{array}$ & $(d/2,2d-1)$ & $  (t^{d/2},t^{2d-1} )$  & $(d/2_3,d/2-1)$ & $- \infty$ & $\frac{2}{d}+\frac{1}{2d-1}$ & $d/2$ & \ref{thm:onepair}.2, \S \ref{amscurves}  \\

    \hline 5 & $(2,13)$ & $  (t^2, t^{13})$& $(2_6)$ & $- \infty$ & $\frac{1}{2}+\frac{1}{13}$ & -1 & \ref{thm:onepair}.4, \S \ref{kashiwaracurves} \\
    \hline 6 & $(2,3),(2,5)$ & $  (t^4, t^6+t^{11})$ & $(4,2_4)$ &$- \infty$ & $\frac{1}{4}+\frac{1}{6}$ & $2$ & \ref{thm:twopairs}.3, \S \ref{amscurves} \\
    \hline 8 & $(3,22)$ & $  (t^3, t^{22})$ & $(3_7)$ & $2 $ & $\frac{1}{3}+\frac{1}{22}$ & $-2$ & \ref{thm:onepair}.5, \S \ref{orevkovcurves} \\
    \hline 8 & $(2,7),(2,3)$ & $  (t^4, t^{14}+t^{17})$ & $(4_3, 2_3)$& $- \infty$ & $\frac{1}{4}+\frac{1}{14}$ & $2$ & \ref{thm:twopairs}.4, \S \ref{amscurves}  \\
    \hline 8 &  $(3,4),(2,7)$  & $  (t^6, t^8+t^{15})$& $(6, 2_6)$& $- \infty$ & $\frac{1}{6}+\frac{1}{8}$ & $2$ & \ref{thm:twopairs}.3, \S \ref{amscurves}  \\
    \hline 9 & $(2,3),(3,8)$  & $  (t^6, t^9+t^{17})$ & $(6, 3_4, 2)$& $- \infty$ & $\frac{1}{6}+\frac{1}{9}$ & $3$ & \ref{thm:twopairs}.3, \S \ref{amscurves}  \\
    \hline 10 & $(4,25)$  & $  (t^4, t^{25})$ & $(4_6)$ & $- \infty$ & $\frac{1}{4}+\frac{1}{25}$ & $0$ & \ref{thm:onepair}.3, \S \ref{kashiwaracurves} \\
    \hline 10 & $(2,3), (3,16)$  & $  (t^6, t^9+t^{25})$& $(6, 3_7)$& $1$ & $\frac{1}{6}+\frac{1}{9}$ & $-2$ & \ref{thm:twopairs}.5, \S \ref{tonocurves} \\
    \hline 10 & $(4,5), (2,9)$  & $  (t^8, t^{10}+t^{19})$&$(8, 2_8)$& $- \infty$ & $\frac{1}{8}+\frac{1}{10}$ & $2$ & \ref{thm:twopairs}.3, \S \ref{amscurves}  \\
    \hline 12 & $(2,7),(3,5)$ & $  (t^6, t^{21}+t^{26})$ & $(6_3, 3_3, 2)$&$- \infty$ & $\frac{1}{6}+\frac{1}{21}$ & $3$ & \ref{thm:twopairs}.4, \S \ref{amscurves}  \\
    \hline 12 & $(2,3),(4,11)$  & $  (t^8, t^{12}+t^{23})$&$(8, 4_4, 3)$ & $- \infty$ & $\frac{1}{8}+\frac{1}{12}$ & $4$ & \ref{thm:twopairs}.3, \S \ref{amscurves}  \\
    \hline 12 & $(3,11),(2,7)$  &$  (t^6, t^{22}+t^{29})$ & $(6_3, 4, 2_5)$ & $- \infty$& $\frac{1}{6}+\frac{1}{22}$ & $2$ & \ref{thm:twopairs}.4, \S \ref{amscurves}  \\
    \hline 12 & $(3,4),(3,11)$  &$  (t^9, t^{12}+t^{23})$ & $(9, 3_6, 2)$& $- \infty$  & $\frac{1}{9}+\frac{1}{12}$ & $3$ & \ref{thm:twopairs}.3, \S \ref{amscurves}  \\
    \hline 12 & $(5,6),(2,11)$  & $  (t^{10}, t^{12}+t^{23})$ & $(10, 2_{10})$& $- \infty$& $\frac{1}{10}+\frac{1}{12}$ & $2$ & \ref{thm:twopairs}.3, \S \ref{amscurves}  \\
    \hline 12 & $(2, 3), (2, 5), (2, 3)$  & $  (t^8, t^{12}+t^{22}+t^{25})$ & $(8, 4_4, 2_3)$ & $- \infty$ & $\frac{1}{8}+\frac{1}{12}$& $2$ & \ref{thm:threepairs}, \S \ref{amscurves} \\
    \hline 13 & $(5,34) $  & $  (t^5, t^{34})$ & $(5_6, 4) $& $- \infty$ & $\frac{1}{5}+\frac{1}{34}$ & $-1$ & \ref{thm:onepair}.4, \S \ref{kashiwaracurves} \\
    \hline 14 & $(6,7),(2,13)$  & $  (t^{12}, t^{14}+t^{27})$ &$(12, 2_{12})$& $- \infty$& $\frac{1}{12}+\frac{1}{14}$ & $2$ & \ref{thm:twopairs}.3, \S \ref{amscurves}  \\
    \hline 15 & $(2,3),(5,14)$ & $  (t^{10}, t^{15}+t^{29})$ &$(10, 5_4, 4)$& $- \infty$& $\frac{1}{10}+\frac{1}{15}$& $5$ & \ref{thm:twopairs}.3, \S \ref{amscurves} \\
    \hline 15 & $(4,5),(3,14)$  & $  (t^{12}, t^{15}+t^{29})$& $(12, 3_8, 2)$& $- \infty$& $\frac{1}{12}+\frac{1}{15}$ & $3$ & \ref{thm:twopairs}.3, \S \ref{amscurves}  \\
    \hline 16 & $(6,43) $  & $  (t^6, t^{43})$ &$(6_7)$& $2$ & $\frac{1}{6}+\frac{1}{43}$ & $-2$& \ref{thm:onepair}.6, \S \ref{orevkovcurves} \\
    \hline 16 & $(2,7),(4,7)$  & $  (t^8, t^{28}+t^{35})$ & $(8_3, 4_3, 3)$& $- \infty$ & $\frac{1}{8}+\frac{1}{28}$ & $4$ & \ref{thm:twopairs}.4, \S \ref{amscurves} \\
    \hline 16 & $(3,4),(4,15)$  & $  (t^{12}, t^{16} + t^{31})$&$(12, 4_6, 3)$& $- \infty$ & $\frac{1}{12}+\frac{1}{16}$ & $4$ & \ref{thm:twopairs}.3, \S \ref{amscurves}  \\
    \hline 16 & $(4,15),(2,7)$ & $  (t^8, t^{30}+t^{37})$&$(8_3, 6, 2_6)$& $- \infty$& $\frac{1}{8}+\frac{1}{30}$ & $2$ & \ref{thm:twopairs}.4, \S \ref{amscurves}  \\
    \hline 16 & $(7,8),(2,15)$ & $  (t^{14}, t^{16}+t^{31})$ &$(14, 2_14)$& $- \infty$& $\frac{1}{14}+\frac{1}{16}$ & $2$ & \ref{thm:twopairs}.3, \S \ref{amscurves}  \\
    \hline 16 & $(2, 7), (2, 3), (2, 3)$  & $  (t^8, t^{28}+t^{34}+t^{37})$ & $(8_3, 4_3, 2_3)$ & $- \infty$ & $\frac{1}{8}+\frac{1}{28}$ & $2$ & \ref{thm:threepairs}, \S \ref{amscurves}  \\
    \hline 16 & $(3, 4), (2, 7), (2, 3)$  & $  (t^{12}, t^{16}+t^{30}+t^{33})$& $(12, 4_6, 2_3)$ & $- \infty$ & $\frac{1}{12}+\frac{1}{16}$ & $2$ & \ref{thm:threepairs}, \S \ref{amscurves}  \\
    \hline 17 & $(3,4), (4,25) $  & $  (t^{12}, t^{16} + t^{41})$ &$(12, 4_9)$& $1$ & $\frac{1}{12}+\frac{1}{16}$ & $-3$ & \ref{thm:twopairs}.5, \S \ref{tonocurves} \\
    \hline 18 & $(2,3),(6,17)$  & $  (t^{12}, t^{18}+t^{35})$ &$(12, 6_4, 5)$& $- \infty$ & $\frac{1}{12}+\frac{1}{18}$ &  $6$ & \ref{thm:twopairs}.3, \S \ref{amscurves}  \\
    \hline 18 & $(3,11),(3,8)$  & $  (t^9, t^{33} + t^{41})$ &$(9_3, 6, 3_4, 2)$& $- \infty$& $\frac{1}{9}+\frac{1}{33}$ & $3$ & \ref{thm:twopairs}.4, \S \ref{amscurves}  \\
    \hline 18 & $(5,6),(3,17)$ & $  (t^{15}, t^{18}+t^{35})$ &$(15, 3_{10}, 2)$ & $- \infty$ & $\frac{1}{15}+\frac{1}{18}$ & $3$ & \ref{thm:twopairs}.3, \S \ref{amscurves} \\
    \hline 18 & $(8,9),(2,17)$ & $  (t^{16}, t^{18}+t^{35})$ & $(16, 2_{16})$& $- \infty$& $\frac{1}{16}+\frac{1}{18}$ & $2$ & \ref{thm:twopairs}.3, \S \ref{amscurves}  \\
    \hline 18 & $(2, 3), (2, 5), (3, 5)$  &$  (t^{12}, t^{18}+t^{33}+t^{38})$ & $(12, 6_4, 3_3, 2)$ & $- \infty$ & $\frac{1}{12}+\frac{1}{18}$ &  $3$ & \ref{thm:threepairs}, \S \ref{amscurves}  \\
    \hline 18 & $(2, 3), (3, 8), (2, 5)$  & $  (t^{12}, t^{18}+t^{34}+t^{39})$ & $(12, 6_4, 4, 2_4)$ & $- \infty$ & $\frac{1}{12}+\frac{1}{18}$ & $2$ & \ref{thm:threepairs}, \S \ref{amscurves}  \\
    \hline 19 & $(2, 3), (2, 7), (3, 7)$ & $  (t^{12}, t^{18} + t^{39} + t^{46})$ &  $(12, 6_5, 3_4)$ & $1$& $\frac{1}{12}+\frac{1}{18}$ & $-2$ & \ref{thm:threepairs}, \S \ref{tonocurves} \\
    \hline 20 &  $(2,7),(5,9)$  & $  (t^{10}, t^{35}+t^{44})$ &$(10_3, 5_3, 4)$ & $- \infty$ & $\frac{1}{10}+\frac{1}{35}$ & $5$ & \ref{thm:twopairs}.4, \S \ref{amscurves}  \\
    \hline 20 &  $(3,4),(5,19)$  & $  (t^{15}, t^{20}+t^{39})$&$(15, 5_6, 4)$ & $- \infty$ & $\frac{1}{15}+\frac{1}{20}$ & $5$ & \ref{thm:twopairs}.3, \S \ref{amscurves}  \\
    \hline 20 &  $(4,5),(4,19)$ & $  (t^{16}, t^{20}+t^{39})$&$(16, 4_8, 3)$ & $- \infty$ & $\frac{1}{16}+\frac{1}{20}$ & $4$ & \ref{thm:twopairs}.3, \S \ref{amscurves}  \\
    \hline 20 &  $(5,19),(2,9)$  & $  (t^{10}, t^{38} + t^{47})$&$(10_3, 8, 2_8)$ & $- \infty$ & $\frac{1}{10}+\frac{1}{38}$ & $2$ & \ref{thm:twopairs}.4 \S \ref{amscurves}  \\
    \hline 20 &  $(9,10),(2,19)$ & $  (t^{18}, t^{20}+t^{39})$ &$(18, 2_{18})$ & $- \infty$ & $\frac{1}{18}+\frac{1}{20}$ & $2$ & \ref{thm:twopairs}.3, \S \ref{amscurves}  \\
    \hline 20 & $(4, 5), (2, 9), (2, 3)$ & $  (t^{16}, t^{20} + t^{38} + t^{41})$  & $(16, 4_8, 2_3)$ & $-\infty$& $\frac{1}{16}+\frac{1}{20}$ & $2$ & \ref{thm:threepairs}, \S \ref{amscurves}  \\

    \hline 21 & $(2,3),(7,20)$   &$  (t^{14}, t^{21}+t^{41})$ &$(14, 7_4, 6)$ & $- \infty$& $\frac{1}{14}+\frac{1}{21}$  & $7$ & \ref{thm:twopairs}.3, \S \ref{amscurves} \\
    \hline 21 & $(6,7),(3,20)$   & $  (t^{18}, t^{21}+t^{41})$ &$(18, 3_{12}, 2)$ & $- \infty$ & $\frac{1}{18}+\frac{1}{21}$ & $3$ & \ref{thm:twopairs}.3, \S \ref{amscurves}  \\
    \hline 22 & $(10,11),(2,21)$   & $  (t^{20}, t^{22}+t^{43})$&$(20, 2_{20})$ & $- \infty$ & $\frac{1}{20}+\frac{1}{22}$  & $2$ & \ref{thm:twopairs}.3, \S \ref{amscurves} \\
    \hline 24 &  $(2,3),(8,23)$  & $  (t^{16}, t^{24}+t^{47})$&$(16, 8_4, 7)$ & $- \infty$ & $\frac{1}{16}+\frac{1}{24}$ & $8$ & \ref{thm:twopairs}.3, \S \ref{amscurves}  \\
    \hline 24 &  $(2,7),(6,11)$  & $  (t^{12}, t^{42}+t^{53})$&$(12_3, 6_3, 5)$ & $- \infty$& $\frac{1}{12}+\frac{1}{42}$ 
 & $6$ & \ref{thm:twopairs}.4, \S \ref{amscurves} \\
    \hline 24 & $(3,4),(6,23)$   & $  (t^{18}, t^{24} + t^{47})$&$(18, 6_6, 5)$ & $- \infty$& $\frac{1}{18}+\frac{1}{24}$ 
 & $6$ & \ref{thm:twopairs}.3, \S \ref{amscurves}  \\
    \hline 24 &  $(3,11),(4,11)$  & $  (t^{12}, t^{44}+t^{55})$&$(12_3, 8, 4_4, 3)$ & $- \infty$& $\frac{1}{12}+\frac{1}{44}$ & $4$ & \ref{thm:twopairs}.4, \S \ref{amscurves}  \\
    \hline 24 & $(4,15),(3,11)$  & $  (t^{12}, t^{45}+t^{56})$ &$(12_3, 9, 3_6, 2)$ & $- \infty$& $\frac{1}{12}+\frac{1}{45}$  & $3$ & \ref{thm:twopairs}.4, \S \ref{amscurves}  \\
    \hline 24 & $(5,6),(4,23)$ & $  (t^{20}, t^{24}+t^{47})$&$(20, 4_{10}, 3)$ & $- \infty$& $\frac{1}{20}+\frac{1}{24}$ 
 & $4$ & \ref{thm:twopairs}.3, \S \ref{amscurves}  \\
    \hline 24 &  $(6,23),(2,11)$  & $  (t^{12}, t^{46}+t^{57})$&$(12_3, 10, 2_{10})$ & $- \infty$& $\frac{1}{12}+\frac{1}{46}$  &  $2$& \ref{thm:twopairs}.4, \S \ref{amscurves} \\
    \hline 24 &  $(7,8),(3,23)$  &$  (t^{21}, t^{24}+t^{47})$ &$(21, 3_{14}, 2)$ & $- \infty$ & $\frac{1}{21}+\frac{1}{24}$  &  $3$ & \ref{thm:twopairs}.3, \S \ref{amscurves} \\
    \hline 24 &  $(11,12),(2,23)$  & $  (t^{22}, t^{24} + t^{47})$&$(22, 2_{22})$ & $- \infty$& $\frac{1}{22}+\frac{1}{24}$ & $2$ & \ref{thm:twopairs}.3, \S \ref{amscurves}  \\
    \hline 24 & $(2, 7), (2, 3), (3, 5)$ & $  (t^{12}, t^{42} + t^{51} + t^{56})$  & $(12_3, 6_3, 3_3, 2)$ & $- \infty$& $\frac{1}{12}+\frac{1}{42}$  & $3$ & \ref{thm:threepairs}, \S \ref{amscurves} \\
    \hline 24 & $(2, 7), (3, 5), (2, 5)$ &  $  (t^{12}, t^{42} + t^{52} + t^{57})$  & $(12_3, 6_3, 4, 2_4)$ & $- \infty$& $\frac{1}{12}+\frac{1}{42}$  & $2$ &  \ref{thm:threepairs}, \S \ref{amscurves} \\
    \hline 24 & $(3, 11), (2, 5), (2, 3)$ &  $  (t^{12}, t^{44} + t^{54} + t^{57})$  & $(12_3, 8, 4_4, 2_3)$ & $- \infty$& $\frac{1}{12}+\frac{1}{44}$  & $2$ & \ref{thm:threepairs}, \S \ref{amscurves} \\
    \hline 24 & $(2, 3), (2, 5), (4, 7)$ & $  (t^{16}, t^{24} + t^{44} + t^{51})$ & $(16, 8_4, 4_3, 3)$ & $- \infty$ & $\frac{1}{16}+\frac{1}{24}$  & $4$ & \ref{thm:threepairs}, \S \ref{amscurves}\\
    \hline 24 & $(2, 3), (4, 11), (2, 7)$ & $  (t^{16}, t^{24} + t^{46} + t^{53})$  & $(16, 8_4, 6, 2_6)$ & $- \infty$& $\frac{1}{16}+\frac{1}{24}$  & $2$ & \ref{thm:threepairs}, \S \ref{amscurves} \\
    \hline 24 & $(3, 4), (2, 7), (3, 5)$ & $  (t^{18}, t^{24} + t^{45} + t^{50})$  & $(18, 6_6, 3_3, 2)$ & $- \infty$& $\frac{1}{18}+\frac{1}{24}$  & $5$ & \ref{thm:threepairs}, \S \ref{amscurves} \\
    \hline 24 & $(3, 4), (3, 11), (2, 5)$ & $  (t^{18}, t^{24} + t^{46} + t^{51})$  & $(18, 6_6, 4, 2_4)$ & $- \infty$ & $\frac{1}{18}+\frac{1}{24}$  & $2$ & \ref{thm:threepairs}, \S \ref{amscurves} \\
    \hline 24 & $(5, 6), (2, 11), (2, 3)$ & $  (t^{20}, t^{24} + t^{46} + t^{49})$  & $(20, 4_{10}, 2_3)$ & $- \infty$ & $\frac{1}{20}+\frac{1}{24}$  & $2$ & \ref{thm:threepairs}, \S \ref{amscurves} \\
    \hline 24 & $\begin{array}{c}
          (2,3),(2,5), \\
         (2,3),(2,3)
    \end{array}$ & $\begin{array}{c}
          (t^{16}, \qquad \qquad \\
         t^{24}+t^{44}+t^{50}+t^{53})
    \end{array}$ & $(16, 8_4, 4_3, 2_2, 3)$& $- \infty$ & $\frac{1}{16}+\frac{1}{24}$  & $2$ & \ref{thm:fourpairs}, \S \ref{amscurves} \\
    \hline 25 & $(4,5),(5,24)$ & $  (t^{20}, t^{25}+t^{49})$ &$(20, 5_8, 4)$ & $- \infty$& $\frac{1}{20}+\frac{1}{25}$ 
 & $5$ & \ref{thm:twopairs}.3, \S \ref{amscurves}  \\
    \hline 26 & $(4,5),(5,36) $  &$  (t^{20}, t^{25}+t^{61})$ &$(20, 5_{11})$ & $1$ & $\frac{1}{20}+\frac{1}{25}$  & $-4$ & \ref{thm:twopairs}.5, \S \ref{tonocurves} \\
    \hline 26 &  $(12,13),(2,25)$  & $  (t^{24}, t^{26}+t^{41})$&$(24, 2_{19})$ & $- \infty$ & $\frac{1}{24}+\frac{1}{26}$  & $2$ & \ref{thm:twopairs}.3, \S \ref{amscurves}  \\
    \hline 27 & $(2,3),(9,26)$   & $  (t^{18}, t^{27}+t^{53})$ &$(18, 9_4, 8)$ & $- \infty$ & $\frac{1}{18}+\frac{1}{27}$  & $9$ & \ref{thm:twopairs}.3, \S \ref{amscurves} \\
    \hline 27 &  $(8,9),(3,26)$  & $  (t^{24}, t^{27}+t^{53})$&$(24, 3_{16}, 2)$ & $- \infty$ & $\frac{1}{24}+\frac{1}{27}$  & $3$ & \ref{thm:twopairs}.3, \S \ref{amscurves} \\
    \hline 27 & $(2, 3), (3, 8), (3, 8)$ & $  (t^{18}, t^{27} + t^{51} + t^{59})$  & $(18, 9_4, 6, 3_6)$ & $- \infty$ & $\frac{1}{18}+\frac{1}{27}$  & $3$ & \ref{thm:threepairs}, \S \ref{amscurves} \\
    \hline 28 & $(2,7), (7,13) $  & $  (t^{14}, t^{49}+t^{62})$ &$(14_3, 7_3, 6)$ & $- \infty$ & $\frac{1}{14}+\frac{1}{49}$  & $7$ & \ref{thm:twopairs}.4, \S \ref{amscurves} \\
    \hline 28 & $(3,4), (7,27) $  & $  (t^{21}, t^{28}+t^{55})$&$(21, 7_6, 6)$ & $- \infty$ & $\frac{1}{21}+\frac{1}{28}$  & $3$ & \ref{thm:twopairs}.3, \S \ref{amscurves} \\
    \hline 28 & $(6,7), (4,27) $  & $  (t^{24}, t^{28}+t^{55})$&$(24, 4_{12}, 3)$& $- \infty$& $\frac{1}{24}+\frac{1}{28}$  & $4$ & \ref{thm:twopairs}.3, \S \ref{amscurves} \\
    \hline 28 & $(7,27), (2,11) $  & $  (t^{14}, t^{54}+t^{65})$&$(14_3, 12, 2_{11})$ & $- \infty$ & $\frac{1}{14}+\frac{1}{54}$  & $2$ &\ref{thm:twopairs}.4, \S \ref{amscurves} \\
    \hline 28 & $(13,14), (2,27) $  & $  (t^{26}, t^{28}+t^{55})$ &$(26, 2_{26})$ & $- \infty$& $\frac{1}{26}+\frac{1}{28}$  & $2$ & \ref{thm:twopairs}.3, \S \ref{amscurves} \\
    \hline 28 & $(2, 3), (3, 10), (3, 10)$ & $  (t^{18}, t^{27} + t^{57} + t^{67})$  & $(18, 9_5, 3_6)$ & $1$& $\frac{1}{18}+\frac{1}{27}$  & $-2$ & \ref{thm:threepairs}, \S \ref{tonocurves} \\
    \hline 28 & $(6, 7), (2, 13), (2, 3)$ & $  (t^{24}, t^{28} + t^{54} + t^{57})$  & $(24, 4_{12}, 2_3)$ & $- \infty$ & $\frac{1}{24}+\frac{1}{28}$ 
 & $2$ & \ref{thm:threepairs}, \S \ref{amscurves} \\
    \hline 30 & $(2,3), (10,29) $  & $  (t^{20}, t^{30}+t^{59})$&$(20, 10_4, 9)$ & $- \infty$ & $\frac{1}{20}+\frac{1}{30}$  & $10$ & \ref{thm:twopairs}.3, \S \ref{amscurves} \\
    \hline 30 & $(3,11), (5,14) $  & $  (t^{15}, t^{55}+t^{69})$&$(15_3, 10, 5_4, 4)$& $- \infty$ & $\frac{1}{15}+\frac{1}{55}$  &  $5$ & \ref{thm:twopairs}.4, \S \ref{amscurves} \\
    \hline 30 & $(4,5), (6,29) $  & $  (t^{24}, t^{30}+t^{59})$&$(24, 6_8, 5)$ & $- \infty$& $\frac{1}{24}+\frac{1}{30}$  & $6$ & \ref{thm:twopairs}.3, \S \ref{amscurves} \\
    \hline 30 & $(5,6), (5,29) $  & $  (t^{25}, t^{30}+t^{59})$&$(25, 5_{10}, 4)$& $- \infty$& $\frac{1}{25}+\frac{1}{30}$  &  $5$& \ref{thm:twopairs}.3, \S \ref{amscurves} \\
    \hline 30 & $(5,19), (3,14) $  & $  (t^{15}, t^{57}+t^{71})$&$(15_3, 12, 3_8, 2)$ & $- \infty$ & $\frac{1}{15}+\frac{1}{57}$  & $3$ & \ref{thm:twopairs}.4, \S \ref{amscurves} \\
    \hline 30 & $(9,10), (3,29) $  & $  (t^{27}, t^{30}+t^{59})$&$(27, 3_{18}, 2)$& $- \infty$& $\frac{1}{27}+\frac{1}{30}$  & $3$ & \ref{thm:twopairs}.3, \S \ref{amscurves} \\
    \hline 30 & $(14,15), (2,29) $  & $  (t^{28}, t^{30}+t^{59})$&$(28, 2_{28})$  & $- \infty$ & $\frac{1}{28}+\frac{1}{30}$  & $2$ & \ref{thm:twopairs}.3, \S \ref{amscurves} \\
    \hline 30 & $(2, 3), (2, 5), (5, 9)$ & $  (t^{20}, t^{30} + t^{55} + t^{64})$  & $(20, 10_4, 5_3, 4)$ & $- \infty$ & $\frac{1}{20}+\frac{1}{30}$  & $5$ & \ref{thm:threepairs}, \S \ref{amscurves} \\
    \hline 30 & $(2, 3), (5, 14), (2, 9)$ & $  (t^{20}, t^{30} + t^{58} + t^{67})$  & $(20, 10_4, 8, 2_8)$ & $- \infty$ & $\frac{1}{20}+\frac{1}{30}$  & $2$ & \ref{thm:threepairs}, \S \ref{amscurves} \\
    \hline 30 & $(4, 5), (2, 9), (3, 5)$ & $  (t^{24}, t^{30} + t^{57} + t^{62})$  & $(24, 6_8, 3_3, 2)$ & $- \infty$& $\frac{1}{24}+\frac{1}{30}$  & $3$ & \ref{thm:threepairs}, \S \ref{amscurves} \\
    \hline 30 & $(4, 5), (3, 14), (2, 5)$ & $  (t^{24}, t^{30} + t^{58} + t^{63})$  &  $(24, 6_8, 4, 2_4)$ & $- \infty$ & $\frac{1}{24}+\frac{1}{30}$  & $2$ & \ref{thm:threepairs}, \S \ref{amscurves} \\
    \hline
\end{longtable}
}
\end{center}

\end{theorem}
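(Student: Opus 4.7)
The plan is to assemble the table by synthesizing the earlier results of the paper: the existing classifications of low-degree rational unicuspidal plane curves, together with the two topological invariants $\lct(C)$ and $\widetilde{C}^2$ which are computable directly from the Newton pairs.  First, I would invoke the bound from \cite[Remark 5.5]{DS22} noted in the proof of Theorems \ref{thm:threepairs} and \ref{thm:fourpairs}: if a rational unicuspidal curve of degree $d$ has cusp parameterized by $k$ Newton pairs, then $(d-1)(d-2) \ge (2^k-1)2^k$.  For $d \le 30$, this forces $k \le 4$, so it suffices to enumerate the cases $k=1,2,3,4$ and union the resulting lists.

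Next, I would run through the four parameter ranges in turn.  For $k=1$ and $k=2$, the complete lists are the curves of Theorems \ref{thm:onepair} and \ref{thm:twopairs}; I would intersect each infinite family with $\{d \le 30\}$, read off the finitely many parameter tuples in each case, and record the resulting Newton pairs and a sample parametrization in the format of the footnote.  For $k=3$ and $k=4$, I would import the exhaustive lists from Theorems \ref{thm:threepairs} and \ref{thm:fourpairs}, whose existence (and non-existence of other candidates) was established via the semigroup counting obstruction of \cite{BorodzikLivingston} and the blow-up/blow-down constructions of Lemmas \ref{lem:existenceofmostcurves} and \ref{lem:type1curves}.  The multiplicity sequence column is then produced from the Newton pairs by the extended Euclidean algorithm of \cite[Theorem 2.2.6]{MoeThesis}.

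For the remaining columns, I would apply the closed-form formulas already recorded.  The log canonical threshold of each curve is $\lct(C) = \frac{1}{P_1} + \frac{1}{Q_1}$ by Equation \ref{lctformula}, where $P_1 = p_1p_2 \cdots p_k$ and $Q_1 = q_1 p_2 \cdots p_k$ are computed from the first Newton pair.  The self-intersection is $\widetilde{C}^2 = 3d - 1 - P_1 - \sum_{i=1}^k Q_i$ by Equation \ref{selfintersection}, again a direct calculation from the Newton pairs.  Finally, the Kodaira dimension $\kappa(\bP^2 - C)$ is filled in by matching each curve to the families catalogued in Section \ref{s:kodairadim}: Kashiwara/AMS curves give $\kappa = -\infty$, Tono's families I(a), I(b), II(a), II(b) give $\kappa = 1$, and the Orevkov families $C_{4k}$, $C_{4k}^*$ give $\kappa = 2$.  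For each curve in Table \ref{t:bigtable} I would indicate in the ``reference'' column both the Newton-pairs theorem establishing the curve's existence and the Kodaira-dimension family to which it belongs.

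The main obstacle is purely bookkeeping: ensuring that every $d \le 30$ value arising from the infinite families in Theorems \ref{thm:onepair} and \ref{thm:twopairs} is enumerated and that the correspondence with the Kodaira-dimension classification of Section \ref{s:kodairadim} is complete and consistent.  In particular, I must double-check that the Fibonacci-indexed families (\ref{thm:onepair}.3, .4, .6 and \ref{thm:twopairs}.1, .2, .7, .8) yield no additional entries in the range $d \le 30$ beyond those already tabulated, and that the infinite families of AMS curves corresponding to ordered factorizations of each $d \le 30$ (as discussed in \S\ref{amscurves}) have each been listed exactly once.  Given that the existence statements are already proved and the invariants are computed by the stated formulas, no further substantive argument is needed: the theorem is the compilation of the previous classification results, with $\lct$ and $\widetilde{C}^2$ obtained by direct substitution into \ref{lctformula} and \ref{selfintersection}.
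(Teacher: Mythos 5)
Your proposal matches the paper's approach exactly: the theorem is proved by combining the exhaustive Newton-pair classifications of Theorems \ref{thm:onepair}, \ref{thm:twopairs}, \ref{thm:threepairs}, and \ref{thm:fourpairs} (with the bound $(d-1)(d-2)\ge(2^k-1)2^k$ ruling out $k\ge 5$ for $d\le 30$), computing $\lct(C)$ and $\widetilde{C}^2$ by direct substitution into Equations \ref{lctformula} and \ref{selfintersection}, and assigning $\kappa(\bP^2-C)$ by matching each curve to the families of Section \ref{s:kodairadim}. The only work is the bookkeeping you describe, and your plan carries it out in the same way the paper does.
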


\appendix 

\section{Computer code}\label{appendix}

Here is the program used to produce a list of candidate curves with cusp parameterized by three Newton pairs.

\begin{verbatim}
def tripleNewtonPairs(degree):
    ab_max = int(((degree - 1) * (degree - 2)) / 2)
    return generate_mn_list(ab_max, degree)

def generate_ab_list(ab_max):
    ab_list = []
    for a in range(2, ab_max+1):
        for b1 in range(a+1, ab_max+1):
            for b2 in range(b1+1, ab_max+1):
                for b3 in range(b2+1, ab_max+1):
                    ab_list.append((a, b1, b2, b3))
    return ab_list

def generate_mn_list(ab_max, degree):
    mn_list = []
    ab_list = generate_ab_list(ab_max)
    for nums in ab_list:
        #parameterization
        a = nums[0]
        b1 = nums[1]
        b2 = nums[2]
        b3 = nums[3]

        # uppercase: check delta invariant
        m1 = a # m_1 * m_2 * m_3
        m2 = math.gcd(m1, b1) # m_2 * m_3
        m3 = math.gcd(m2, b2) # m_3
        n1 = b1 # n_1 * m_2 * m_3
        n2 = b2 - b1 # n_2 * m_3
        n3 = b3 - b2 # n_3

        # lowercase: values in newton pairs (m, n)
        m_3 = m3
        m_2 = int(m2 / m3)
        m_1 = int(m1 / m2)
        n_3 = n3
        n_2 = int(n2 / m3)
        n_1 = int(n1 / m2)

        # semigroup generators
        w1 = m1
        w2 = n1
        w3 = (m_1 * w2) + n2
        w4 = (m_2 * w3) + n3

        # delta invariant
        d_inv = int(((degree - 1) * (degree - 2)))

        # check constraints 
            if all(i > 1 for i in (m1, m2, m3, m_1, m_2, m_3, n_1, n_2)):
                if (math.gcd(m_1, n_1) == 1 and math.gcd(m_2, n_2) == 1 
                and math.gcd(m_3, n_3) == 1):
                    pair1 = (m1, n1)
                    pair2 = (m2, n2)
                    pair3 = (m3, n3)
                    # check for duplicate pairs
                    if (not pair1 == pair2 and not 
                    pair1 == pair3 and not pair2 == pair3):
                        d_inv_check = int(((m1 - 1) * (n1 - 1)) + 
                        ((m2 - 1) * n2) + ((m3 - 1) * n3))
                        # check delta invariant
                        if(d_inv == d_inv_check):
                            valid = True
                            semigroup = [0]
                            # check semigroup property
                            for l in range(1, degree-2):
                                semigroup = getSemigroupTriple(w1, w2, w3, w4,
                                l, degree, semigroup)
                                compare = (l+1)*(l+2)/2
                                if(len(semigroup) != compare):
                                    valid = False
                            if(valid):
                                mn_list.append((pair1, pair2, pair3))
    return mn_list

def getSemigroupTriple(w1, w2, w3, w4, l, degree, semigroup):
    maxVal = l * degree
    for num in semigroup:
        num1 = num + w1
        num2 = num + w2
        num3 = num + w3
        num4 = num + w4
        if(num1 not in semigroup and num1 <= maxVal):
            semigroup.append(num1)
            semigroup.sort()
        if(num2 not in semigroup and num2 <= maxVal):
            semigroup.append(num2)
            semigroup.sort()
        if(num3 not in semigroup and num3 <= maxVal):
            semigroup.append(num3)
            semigroup.sort()
        if(num4 not in semigroup and num4 <= maxVal):
            semigroup.append(num4)
            semigroup.sort()
        if (num1 > maxVal and num2 > maxVal and num3 > maxVal and num4 > maxVal):
            return semigroup
\end{verbatim}


\bibliographystyle{alpha}
\bibliography{curves}

\end{document}